\documentclass[12pt,reqno]{amsart}

\pagestyle{plain}
\overfullrule 0pt
\usepackage{amsfonts, amssymb, epsf, amsmath, graphicx, comment,xcolor}
\newcommand{\R}{\mathbb{R}}
\newcommand{\N}{\mathbb{N}}
\newcommand{\Q}{\mathbb{Q}}
\newcommand{\Z}{\mathbb{Z}}

\newcommand{\A}{\mathcal{A}}

\newcommand{\F}{\mathcal{F}}

\newcommand{\larr}{\left( \begin{array}{c}}
\newcommand{\rarr}{\end{array} \right) }

\newcommand{\lsqarr}{\left[ \begin{array}{c}} 
\newcommand{\rsqarr}{\end{array} \right]}

\newcommand{\be}{\begin{equation}}
\newcommand{\ee}{\end{equation}}
\textwidth=13.5cm

\newtheorem{theorem}{Theorem}

\newtheorem{cor}[theorem]{Corollary}

\newtheorem{lemma}[theorem]{Lemma} 
\newtheorem{prop}[theorem]{Proposition}
\theoremstyle{definition}

\newtheorem{example}{Example}
\begin{document}

\title[Rotation Number]{Rotation Numbers and Rotation Classes on One-Dimensional Tiling Spaces}
\author{Jos\'e Aliste-Prieto}\address{Departamento de Matem\'aticas, Universidad Andres Bello, Republica 498, Santiago, Chile}
\email{jose.aliste@unab.cl}
\author{Betseygail Rand}\address{Department of Mathematics, 
Texas Lutheran University, Seguin, TX 78155}\email{brand@tlu.edu}
\author{Lorenzo Sadun}\address{Department of Mathematics, 
University of Texas, Austin, TX 78712} \email{sadun@math.utexas.edu}
\maketitle
\begin{abstract}
We extend rotation theory of circle maps to tiling spaces. Specifically, we consider
a 1-dimensional tiling space $\Omega$ with finite local complexity and study 
 self-maps $F$ that are  homotopic to the
identity and whose displacements are strongly pattern equivariant (sPE).  
In place of the familiar rotation number we define a cohomology class 
$[\mu] \in \check H^1(\Omega, \R)$. We prove existence and uniqueness results for this class,
develop a notion of irrationality, and prove an analogue of Poncar\'{e}'s Theorem: If $[\mu]$ is irrational, then $F$ is semi-conjugate to uniform
translation on a space $\Omega_\mu$ of tilings that is homeomorphic to $\Omega$. In such cases, $F$ is semi-conjugate
to uniform translation on $\Omega$ itself if and only if $[\mu]$ lies in a certain subspace
of $\check H^1(\Omega, \R)$. 
\end{abstract}

\section{Introduction}

Since 1885, rotations numbers have been used to understand
orientation-preserving self-homemorphisms $f:S^1 \to S^1$ of the circle $S^1=\R/\Z$.  
The basic questions of rotation theory are:
\begin{enumerate}
    \item When is $f$ the time-1 sampling of a uni-directional flow on $S^1$?
    \item When is $f$ conjugate to a uniform rotation on $S^1$?
    \item When is $f$ semi-conjugate to a uniform rotation on $S^1$?
\end{enumerate}
The first two questions are of course equivalent, since any uni-directional flow is conjugate to a constant flow. However, in 
the more general setting considered in this paper, they will turn out to be different. 

Every orientation-preserving homeomorphism $f: S^1 \to S^1$ lifts to a homeomorphism $F: \R \to \R$ such that 
$F(x+1)=F(x)+1$ for all $x \in \R$. Poincar\'e \cite{Poincare1885} defined the {\em translation} number of $F$
to be $\lim_{n \to \infty} \frac{F^n(x)-x}{n}$. This limit exists for all lifts $F$ and
starting points $x$, and does not depend on $x$. Different lifts give 
translation numbers
that differ by integers, and the {\em rotation} number of $f$ is their common image in
$\R/\Z$.

Poincar\'e showed that, if the rotation number is irrational, then $f$ is 
semi-conjugate to a uniform rotation on the circle. Denjoy \cite{Denjoy} generated
examples where this semi-conjugacy is not a conjugacy. However, he also showed that 
if $f$ is sufficiently
smooth (specifically, if $f'$ has bounded variation, and in particular if $f$ is $C^2$) then the semi-conjugacy must
in fact be a conjugacy. 
If $L$ is any length, the circle $\R/L\Z$ can be viewed as the orbit,
under translations, of a tiling that is periodic with period $L$. 
We can then view an orientation-preserving homeomorphism of 
$\R/L\Z$ as a map on a space of periodic tilings. 
The goal of this paper, which can be viewed as an extension of \cite{aliste2010translation},
is to extend rotation theory to 1-dimensional {\em non\/}-periodic tiling spaces.

Extensions of rotation theory to higher dimensions and to non-periodic settings have been 
considered by many authors in the literature. Misiurewicz and Ziemian 
\cite{misiurewicz1989rotation} extended rotation theory to maps of tori that are homotopic
to the identity. There has been an extensive work on the study of such maps and a 
Poincar\'e Theorem was obtained first for quasiperiodically forced circle maps by Stark and
J\"ager in \cite{jager2006towards} and later extended to $\rho$-bounded pseudo-rotations 
by J\"ager in \cite{jager2009linearization}. Extending rotation-theory to non-periodic 
settings goes back perhaps to R.~Johnson and J.~M\"oser \cite{johnson1982rotation} who 
studied the rotation number of an almost periodic Schr\"odinger equation. J.~Kwapisz 
\cite{Kwapisz2000} studied the rotation sets for maps of the real line with almost-periodic 
displacement in the sense of Bohr. Tiling spaces have a similar structure to solenoids, 
and rotation theory for solenoids was developed by A.~Clark \cite{Clark05}. Later, the first author \cite{aliste2010translation} studied the rotation numbers for maps of the real-line with pattern-equivariant displacements and obtained a Poincar\'e Theorem for $\rho$-bounded maps with irrational rotation numbers. In \cite{aliste2012almost}, the first author and T. J\"ager obtained a Poincar\'e Theorem that includes at the same time the almost-periodic case studied by Kwapisz and the quasiperiodically forced circle case studied by Stark and J\"ager.

%One of the motivations for the study of rotation theory in the non-periodic setting come from the study of Frenkel-Kontorova models in physics. 
\subsection{Statements of the main theorems}

If $\Omega$ is such a tiling
space, we study
self-homeomorphisms $F: \Omega \to \Omega$ that are homotopic to the identity (``identity-homotopic'')
and that have  {\em strong pattern equivariant displacement} (sPE displacement) (See section 2 for a precise definition) .  
Instead of defining a rotation {\em number} $\rho$, we define a rotation {\em class}
$[\mu]$ in the first \v Cech cohomology $\check H^1(\Omega, \R)$, and define what it means for this class to be irrational. (Note that 
$\check H^1(S^1,\R)=\R$, so a rotation class for a circle map is just a number. The usual rotation number 
$\rho$ turns out to be $\mu^{-1} \pmod{1}$.) We prove the following main theorems under the assumptions that our tiling
space $\Omega$ is compact, minimal and uniquely ergodic, and
that $F: \Omega \to \Omega$ is a strongly pattern equivariant
map, homotopic to the identity. These terms are defined 
precisely in Section \ref{sec-defn}.
If these assumptions are met, we write $F \in \F(\Omega)$, or sometimes just $F \in \F$
when the space $\Omega$ is clear. 

\begin{theorem}[Proposition \ref{mu-exists}] \label{main1-1} If $F \in \F(\Omega)$ 
is the time-one sampling 
of a uni-directional strongly pattern-equivariant flow, and if $F$ has no fixed points,
then the rotation class $[\mu]$ of $F$ exists. \end{theorem}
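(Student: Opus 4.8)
The plan is to produce a representative of the class directly from the infinitesimal generator of the flow, and then take its reciprocal. Let $\phi_t$ denote the uni-directional sPE flow with $F=\phi_1$, and write $T_x$ for translation by $x\in\R$. By the definition of an sPE flow (Section~\ref{sec-defn}), $\phi$ is a reparametrization of the translation flow generated by an sPE velocity function $v:\Om\to\R$; that is, writing $\phi_t(\omega)=T_{x(t)}\omega$ with $x(0)=0$, the base point obeys
\be
\dot x(t)=v(T_{x(t)}\omega),\qquad x(0)=0 .
\ee
In particular the displacement of $F$ is $D(\omega)=x(1)$, so that $F(\omega)=T_{D(\omega)}\omega$.

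First I would establish that $v$ is bounded away from zero. A zero of $v$ is an equilibrium of $\phi$, hence a fixed point of every $\phi_t$ and in particular of $F=\phi_1$; since $F$ has no fixed points, $v$ never vanishes. Uni-directionality forces $v$ to keep a constant sign, so after reversing orientation if necessary we may assume $v>0$ everywhere, and then compactness of $\Om$ together with continuity of $v$ gives constants $0<c\le C<\infty$ with $c\le v\le C$. I would then set $\mu:=1/v$. Because $v$ is sPE with some recognition radius $R$ and is bounded below by $c>0$, the function $\mu$ depends on the same radius-$R$ patch data and is continuous, hence is again sPE. Thus $\mu\,dx$ is an sPE $1$-form on the line, automatically closed, and so it determines a pattern-equivariant cohomology class $[\mu]\in\check H^1(\Om,\R)$.

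Next I would verify that $[\mu]$ is a rotation class of $F$, namely that integrating $\mu$ over the displacement of $F$ returns the elapsed time $1$. Since $v\ge c>0$ the path $t\mapsto x(t)$ is a strictly increasing reparametrization, so along the flow line the change of variables $dt=dx/v(T_x\omega)=\mu(T_x\omega)\,dx$ is legitimate, and integrating from $t=0$ to $t=1$ yields
\be
\int_0^{D(\omega)}\mu(T_x\omega)\,dx = 1 \qquad\text{for every }\omega\in\Om .
\ee
This is exactly the normalization required of a representative of the rotation class of $F$, so the class $[\mu]$ exists.

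The main obstacle is the uniform positivity of $v$: the very definition of $\mu=1/v$ as a bounded sPE function, and the legitimacy of the substitution $dt=\mu\,dx$, both rest on ruling out zeros of $v$ and then upgrading this to a strictly positive lower bound. This is precisely the step that consumes the hypotheses that $F$ has no fixed points and that the flow is uni-directional, in conjunction with compactness of $\Om$; the cohomological bookkeeping and the closure of the sPE class under $v\mapsto 1/v$ are comparatively routine.
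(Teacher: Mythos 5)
Your construction is the same as the paper's: the rotation form is $\mu = dx/v(x)$, the reciprocal of the sPE velocity, and its class in $\check H^1(\Omega,\R)$ is the rotation class. Your preliminary step (no fixed points $\Rightarrow$ $v$ has no zeros, then compactness and unidirectionality give $0 < c \le v \le C$, so $1/v$ is sPE with the same radius as $v$) is fine, and is in fact spelled out more carefully than in the paper, which records the equivalence of ``no fixed points'' with ``$V$ has no zeros'' as an aside and takes positivity for granted in the proof.

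However, there is a gap at the final step: what you verify is the normalization $\int_0^{D(\omega)}\mu = 1$, and you then assert that this ``is exactly'' what is required. It is not. The paper's definition of a rotation form is a bracketing property: there must be a radius $R$ such that whenever $T-x_1$ and $T-x_2$ agree on $B_R(0)$ and $n_\pm$ are integers with $n_- < \int_{x_1}^{x_2}\mu < n_+$, the point $x_2$ lies strictly between $f^{n_-}(x_1)$ and $f^{n_+}(x_1)$. Your normalization does imply this, but only via an argument you never make: by additivity along the orbit, $\int_{x_1}^{f^n(x_1)}\mu = n$ for every integer $n$, and since $\mu > 0$ the map $y \mapsto \int_{x_1}^{y}\mu$ is strictly increasing, so $n_- < \int_{x_1}^{x_2}\mu < n_+$ forces $f^{n_-}(x_1) < x_2 < f^{n_+}(x_1)$. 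This bracketing step is not bookkeeping; it is the entire content of the paper's proof, where it is phrased as follows: $s = \int_{x_1}^{x_2}\mu$ is the flow time from $x_1$ to $x_2$, so $\bar f_s(x_1) = x_2$, and unidirectionality of the flow turns $n_- < s < n_+$ into the desired inequalities. Your closing assessment --- that uniform positivity of $v$ is ``the main obstacle'' while the rest is ``routine'' --- has the emphasis backwards relative to the paper: the paper treats positivity as immediate and devotes its proof precisely to the step you omitted. With that one additional paragraph your argument would be complete and essentially identical to the paper's.
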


In Section \ref{sec-examples}, we exhibit a tiling space $\Omega$ and a map $F \in \F$ with no fixed points 
such that $[\mu]$ does not exist, implying that this $F$ does not come from an sPE flow. 

Rotation classes are defined, using a de-Rham like cohomology theory, via differential forms ({\em rotation forms}) meeting certain combinatorial conditions. If $\mu$ meets
these conditions and $\mu'$ is cohomologous to $\mu$, then $\mu'$ also meets these conditions and
we say that $[\mu]$ is a rotation {\em class}.

\begin{theorem}[Theorem \ref{thm-unique}] If $F \in \F(\Omega)$ has rotation forms
$\mu$ and $\nu$, and if $[\mu]$ is irrational, then $\mu$ and $\nu$ are cohomologous.
\end{theorem}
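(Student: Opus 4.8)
The plan is to reduce the uniqueness statement to an exactness statement about the difference form and then to run a Gottschalk--Hedlund type argument, with irrationality supplying the minimality that makes it work. Write $\omega = \mu - \nu$. Since $\mu$ and $\nu$ are both rotation forms for the \emph{same} map $F$, the normalization built into the definition of a rotation form (that moving a point $x$ to $F(x)$ costs exactly one unit of $\mu$-``time'') gives $\int_x^{F(x)} \hat\omega = 0$ for every $x$, where $\hat\omega$ denotes the pullback of $\omega$ to the real line along a translation orbit. In particular the primitive $g(x) := \int_{x_0}^x \hat\omega$ is $F$-invariant, $g\circ F = g$, and by telescoping $\int_x^{F^n(x)}\hat\omega = 0$ for all $n \in \Z$. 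The goal is to upgrade this to the statement that $g$ is itself strongly pattern equivariant, for then $\omega = dg$ is exact and $[\mu] = [\nu]$ in $\check H^1(\Omega,\R)$.

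First I would show that $g$ is bounded, using only that $F$ has no fixed points. Because $\Omega$ is compact and $F$ is identity-homotopic with sPE displacement, the displacement $\tau(x) = F(x)-x$ is a continuous positive function, hence $0 < c \le \tau \le C$ for constants $c,C$; the $F$-orbit $\{F^n(x_0)\}_{n\in\Z}$ is therefore syndetic in $\R$ with gaps bounded by $C$. Since $\omega$ is continuous on the compact space $\Omega$, $\hat\omega$ is bounded, and as $g$ takes the single value $g(x_0)$ on the entire orbit, for arbitrary $x$ one estimates $|g(x)-g(x_0)| = |\int_{F^n(x_0)}^x \hat\omega| \le C\,\|\omega\|_\infty$ by choosing the nearest orbit point. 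Thus $g$ is uniformly bounded. This step does not yet use irrationality.

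The heart of the argument is to promote the bounded $F$-invariant primitive $g$ to a strongly pattern-equivariant function, and this is exactly where irrationality of $[\mu]$ enters. When $[\mu]$ is irrational the reparametrized translation flow determined by $\mu$ --- equivalently the action generated by $F$ together with the translation flow --- is minimal, so no nontrivial periodic (or clopen) structure can obstruct the reconstruction of $g$ from local data. I would invoke a Gottschalk--Hedlund / unique-ergodicity mechanism: on a minimal, uniquely ergodic system a cocycle with uniformly bounded orbit sums is a continuous coboundary, and here the boundedness of $g$ together with its constancy along the $F$-orbits (which are dense in $\Omega$ by minimality) forces the value $g(x)$ to depend continuously on the local pattern of the tiling at $x$. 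This identifies $g$ with a continuous, hence sPE, function on $\Omega$ and yields $\omega = dg$.

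The main obstacle is precisely this last upgrade, from a bounded primitive to an sPE (exact) one. Boundedness alone only shows that $[\omega]$ pairs trivially with the unique invariant measure --- it kills the ``average'' part of the class but not, a priori, the remainder of $\check H^1(\Omega,\R)$, which is typically of rank greater than one. Making the reconstruction of $g$ from local combinatorial data rigorous, and verifying that irrationality is exactly the hypothesis under which it succeeds (the statement is expected to fail in the rational or periodic regime, mirroring the classical non-uniqueness at rational rotation number), is where the real work lies and where the precise definitions of ``rotation form'' and ``irrational'' must be used in full.
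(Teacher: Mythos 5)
Your proposal has two genuine gaps, and the first one invalidates the opening step. The definition of a rotation form contains no normalization of the kind you invoke: it does \emph{not} say that $\int_x^{f(x)}\mu = 1$ for every $x$, only that for pairs of \emph{return points} $x_1,x_2$ (points whose $R$-patterns agree) the integral $\int_{x_1}^{x_2}\mu$ brackets the position of $x_2$ inside the $f$-orbit of $x_1$. Already for circle maps this distinction is fatal to your argument: $dx/\rho$ is a rotation form for any homeomorphism with rotation number $\rho$, yet $\int_x^{f(x)} dx/\rho = (f(x)-x)/\rho$ is not $1$ unless $f$ is the rigid rotation; likewise, if $\nu = \mu + dh$ with $h$ sPE, then $\int_x^{f(x)}(\mu-\nu) = h(x)-h(f(x))$, which is generically nonzero even though $\mu$ and $\nu$ are both rotation forms for the same $F$. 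So your primitive $g$ is \emph{not} $F$-invariant, the telescoping identity fails, and the boundedness argument resting on syndeticity of the $F$-orbit collapses. The correct route to boundedness (the one the paper takes, after a shape change making $\mu$ into $dx$) is combinatorial: at return points, both rotation forms bracket where $x_2$ sits in the $f$-orbit of $x_1$, and monotonicity of $f$ forces $\lfloor \int_{x_1}^{x_2}\mu\rfloor = \lfloor\int_{x_1}^{x_2}\nu\rfloor$ whenever these are not integers; hence $\left|\int_{x_1}^{x_2}(\mu-\nu)\right| \le 1$ on returns, and repetitivity (returns occur with bounded gaps) bounds the integral everywhere. You were right that no irrationality is needed for this part, but your mechanism for it is wrong.

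The second gap is the phrase ``continuous, hence sPE.'' In this setting, continuity of a function on $\Omega$ is precisely \emph{weak} pattern equivariance, not strong; the paper is explicit that sPE functions are the ones locally constant in the Cantor direction. Gottschalk--Hedlund, applied to the bounded integral of $\mu-\nu$, yields exactly that $\mu - \nu = dg$ with $g$ wPE, i.e.\ that $\mu-\nu$ is \emph{asymptotically negligible} --- and asymptotically negligible classes are typically nonzero in $\check H^1(\Omega,\R)$ (for Fibonacci, $[i_a - \phi\, i_b]$ is a nontrivial AN class), so this does not prove $[\mu]=[\nu]$. The whole content of the theorem is the upgrade from wPE to sPE, which your last paragraph explicitly defers as ``where the real work lies.'' The paper closes this gap by contradiction: if $g$ is not sPE, there are return points $x_3,x_4$ with $g(x_4)-g(x_3)>\epsilon$; minimality of translation-by-$1$ on $\Omega_\mu$ (this is exactly what irrationality of $[\mu]$ means) produces a return point $x_5$ for $x_4$ with $x_5-x_3$ lying just below an integer $n$, whence $\int_{x_3}^{x_5}\nu' = (x_5-x_3)+(g(x_5)-g(x_3))$ lies just above $n$, so the two integer parts disagree --- contradicting the return-point comparison from the first step. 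Without an argument playing this role, your proof establishes only asymptotic negligibility of $\mu-\nu$, which is strictly weaker than the claimed cohomology.
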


That is, rotation classes are unique if they are irrational. However, there exist maps
that admit multiple rotation classes, all of them rational. We construct such a map
in Section \ref{sec-examples}.

\begin{theorem}[Theorem \ref{irr implies semiconj} and Corollary \ref{irr implies local semiconj}] \label{main2-1} If $F \in \F(\Omega)$ and if the rotation class 
$[\mu]$ of $F$ exists and is 
irrational, then $F$ is semi-conjugate to the time-one 
sampling of 
a strongly PE uni-directional flow on $\Omega$. \end{theorem}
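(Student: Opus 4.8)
The plan is to adapt Poincar\'e's construction by using the rotation form $\mu$ to build a reparametrized tiling space $\Omega_\mu$ on which $F$ becomes the time-one map of a \emph{uniform} translation flow, and then to transport this structure back to $\Omega$. Writing $F(T) = \phi_{\psi(T)}(T)$ for the translation flow $\phi$ and the sPE displacement $\psi$, the defining property of a rotation form is that $\int_0^{\psi(T)} \mu(\phi_u T)\,du = 1$ for every $T$: one $F$-step always has $\mu$-length exactly $1$. First I would record that the combinatorial conditions making $\mu$ a rotation form force its orbitwise primitive $\beta_T(t) := \int_0^t \mu(\phi_u T)\,du$ to be non-decreasing in $t$, so that $t \mapsto \beta_T(t)$ is a monotone ``clock'' that advances by exactly $1$ under each application of $F$.

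Next I would use $\beta$ to define the semi-conjugacy. For each $T$ I reparametrize by $\mu$-arclength: the point at flow-time $t$ in $T$ is sent to the point at position $\beta_T(t)$ in a new tiling $\Theta(T)$ of $\R$, and I set $\Omega_\mu$ to be the closure of $\{\tilde\phi_s \Theta(T) : T \in \Omega,\ s \in \R\}$ under the uniform flow $\tilde\phi$. Intervals on which $\mu$ vanishes are collapsed to points, which is precisely why the map $h := \Theta$ is only a semi-conjugacy and not a conjugacy (the tiling-space analogue of Denjoy's wandering intervals). The intertwining $h \circ F = \tilde\phi_1 \circ h$ is then immediate from $\beta_T(\psi(T)) = 1$; minimality of $\Omega$ gives surjectivity of $h$, and continuity follows from the sPE (hence uniformly continuous along orbits and transversally locally constant) nature of $\mu$.

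The substantive work, and the step where irrationality is essential, is to verify that $\Omega_\mu$ is a genuine FLC tiling space homeomorphic to $\Omega$, so that uniform translation on $\Omega_\mu$ can be read back as a strongly PE uni-directional flow on $\Omega$ itself. Here I would check FLC and minimality of $\Omega_\mu$ directly from those properties of $\Omega$ together with the sPE structure of $\mu$, and use unique ergodicity to control the transverse measure of the collapsed set. Irrationality of $[\mu]$ guarantees that the clock $\beta$ never closes up in a degenerate way, so that $\Omega_\mu$ does not collapse onto a circle or a lower-dimensional object and the reparametrization is a homeomorphism rather than a strict quotient. Transporting the uniform flow on $\Omega_\mu$ through this homeomorphism yields the sPE flow $\tilde\phi$ on $\Omega$ and the desired semi-conjugacy $h \colon \Omega \to \Omega$ with $h \circ F = \tilde\phi_1 \circ h$.

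I expect the hardest part to be exactly this homeomorphism $\Omega_\mu \cong \Omega$: one must show that no essential combinatorial information is lost under the $\mu$-reparametrization, which requires simultaneously controlling the flat regions of $\beta$ and proving that the induced map on the transversal (equivalently, on \v{C}ech cohomology) is invertible. The irrationality hypothesis enters precisely to rule out the rational obstructions under which $\Omega_\mu$ would fail to recover $\Omega$.
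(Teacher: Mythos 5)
Your proposal hinges on a misreading of what a rotation form is, and that misreading simultaneously trivializes the problem and skips the entire content of the theorem. You assert that the defining property of $\mu$ is that ``one $F$-step always has $\mu$-length exactly $1$,'' i.e.\ $\int_{x}^{f(x)}\mu = 1$ for every $x$. That is not the definition. A rotation form only satisfies the sandwiching property: whenever $T-x_1$ and $T-x_2$ agree out to radius $R$ and $n_\pm$ are integers with $n_- < \int_{x_1}^{x_2}\mu < n_+$, the point $x_2$ lies strictly between $f^{n_-}(x_1)$ and $f^{n_+}(x_1)$. If your exact-clock identity held, $F$ would literally \emph{be} the time-one map of the flow with velocity $v$ determined by $\mu = dx/v$, the shape change by $\mu$ would conjugate (not merely semi-conjugate) $F$ to $\Gamma_1$, and irrationality would be irrelevant. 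Because the actual hypothesis is much weaker, the map $F' = S\circ F\circ S^{-1}$ on the shape-changed space $\Omega_\mu$ is \emph{not} $\Gamma_1$; it is only a map admitting $dx$ as a rotation form, and the substance of the theorem is precisely to semi-conjugate $F'$ to $\Gamma_1$. The paper does this by establishing that $F'$ is $1$-bounded (Theorems \ref{rho-exists} and \ref{theorem9} applied on $\Omega_\mu$ with rotation form $dx$ and $\rho = 1$), observing that irrationality of $[\mu]$ means exactly that $\Gamma_1$ acts minimally on $\Omega_\mu$, and then invoking the Poincar\'e-type Theorem \ref{poincare} of \cite{aliste2010translation}, whose proof builds the semi-conjugacy from the $\limsup$ of displacements, $\tilde\psi(x) = \limsup_{n}\{\tilde\phi^n(x) - n\}$. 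None of this machinery---$\rho$-boundedness, minimality, or the $\limsup$ construction---appears anywhere in your argument.

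Relatedly, your account of the two structural features is backwards. First, the failure of injectivity (the Denjoy phenomenon) has nothing to do with ``intervals on which $\mu$ vanishes'': the shape change is performed with a positive sPE representative and is a homeomorphism $\Omega \to \Omega_\mu$ that collapses nothing. The collapsed intervals live in the $\limsup$ map $\tilde{\jmath}$, namely where $\tilde\psi$ has derivative $-1$; they are produced by the dynamics of $F$, not by any degeneracy of $\mu$. Second, irrationality is not needed to show that $\Omega_\mu$ is an FLC tiling space homeomorphic to $\Omega$---that is automatic from the shape-change construction \cite{Clark-Sadun06} for any positive sPE form, rational class or not. Irrationality enters solely to supply the minimality hypothesis of Theorem \ref{poincare}. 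As written, your proposal uses no irrationality at the step where the semi-conjugacy is actually needed, and spends it instead on a step that requires no hypothesis at all; the gap is the missing Poincar\'e argument for $F'$ on $\Omega_\mu$.
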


There are two key differences between these results and the 
classical theory of circle maps. The first difference is that we do not have an 
analogue of Denjoy's Theorem. While we conjecture that
some version of Denjoy's Theorem is still true, the usual proofs fail spectacularly in the
setting of tiling spaces. 

The second difference is that if $F$ is (semi-)conjugate to 
the time-1 sampling of a uni-directional (and strongly PE) flow on $\Omega$, this does {\em not} imply that $F$ is (semi-)conjugate to a uniform translation
on $\Omega$ itself. Instead, it implies that $F$ can be (semi-)conjugated to a uniform translation on {\em another} tiling space $\Omega'$ that is homeomorphic to $\Omega$. Whether the translation action on $\Omega$ is topologically conjugate (up to a uniform rescaling) to the translation action on $\Omega'$ is a purely cohomological question.
Applying results of \cite{Julien-Sadun}, we show there is a subspace of $H^1(\Omega,\R)$, denoted $\R \, dx \oplus H^1_{AN}(\Omega,\R)$, such that the translation actions on $\Omega$ and $\Omega'$ are topologically conjugate, up to a uniform rescaling, if $[\mu]$ lies in that subspace. This implies that: 

\begin{theorem} \label{main4-1} Suppose that $F \in \F(\Omega)$, 
that $[\mu]$ exists and is irrational, and that $[\mu] \in \R \, dx \oplus 
H^1_{AN}(\Omega,\R) \subset H^1(\Omega,\R)$. Then $F$ is semi-conjugate to a uniform 
translation on $\Omega$. 
\end{theorem}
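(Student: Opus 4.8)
The plan is to deduce this from Theorem~\ref{main2-1} together with the shape-deformation theory of \cite{Julien-Sadun}. Since $[\mu]$ exists and is irrational, Theorem~\ref{main2-1} supplies a semi-conjugacy $h \colon \Omega \to \Omega$ exhibiting $F$ as a factor of the time-one map $\Phi_1$ of a strongly PE uni-directional flow $\Phi$ on $\Omega$; that is, $h$ is surjective and $h \circ F = \Phi_1 \circ h$. The flow $\Phi$ is in general \emph{not} the standard translation flow, but because it is uni-directional on a one-dimensional tiling space its orbits are forced to coincide with the translation orbits, so $\Phi$ is a pattern-equivariant respeeding of the translation flow, traversing each orbit at a variable rate encoded by the rotation form $\mu$. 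The remaining task is to show that, under the stated cohomological hypothesis, this respeeded flow is conjugate to a constant-speed translation flow on $\Omega$ itself.

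First I would make the respeeding explicit as a reshaping of tiles. Integrating $\mu$ along tiles gives a pattern-equivariant reassignment of tile lengths and hence a deformed tiling space $\Omega_\mu$, together with a reshaping homeomorphism $g \colon \Omega \to \Omega_\mu$ chosen precisely so that $g$ carries $\Phi$ to the standard constant-speed translation flow $\Psi^\mu$ on $\Omega_\mu$ (this is the construction already underlying the proof of Theorem~\ref{main2-1}). Consequently $g \circ h$ is a semi-conjugacy of $F$ onto the uniform translation $\Psi^\mu_1$ on $\Omega_\mu$. The space $\Omega_\mu$ is homeomorphic to $\Omega$ by construction, but the translation \emph{action} on $\Omega_\mu$ need not be topologically conjugate to that on $\Omega$; the obstruction is exactly the deformation class, which here is the new length form $\mu$, i.e.\ the class $[\mu] \in H^1(\Omega,\R)$ relative to the undeformed length form $dx$.

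The key step is then to invoke \cite{Julien-Sadun}. The reshaping determined by $\mu$ produces a translation system $\Omega_\mu$ that is topologically conjugate to $(\Omega,\text{translation})$, up to an overall uniform rescaling of lengths, precisely when $[\mu]$ lies in the distinguished subspace $\R\,dx \oplus H^1_{AN}(\Omega,\R)$: the $\R\,dx$ summand records a uniform rescaling of all lengths, which yields a conjugacy after a constant reparametrization of time, while $H^1_{AN}(\Omega,\R)$ is the asymptotically negligible subspace whose classes correspond exactly to shape deformations that leave the topological conjugacy class of the translation action unchanged. Since by hypothesis $[\mu] \in \R\,dx \oplus H^1_{AN}(\Omega,\R)$, this produces a homeomorphism $k \colon \Omega_\mu \to \Omega$ intertwining $\Psi^\mu$ with a constant-speed translation flow $\Psi$ on $\Omega$, say $k \circ \Psi^\mu_t = \Psi_{\alpha t} \circ k$ for the rescaling constant $\alpha > 0$.

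Finally I would compose the three maps. A direct computation gives $(k \circ g \circ h)\circ F = \Psi_\alpha \circ (k \circ g \circ h)$, so the surjective map $k \circ g \circ h \colon \Omega \to \Omega$ semi-conjugates $F$ to the time-one map $\Psi_\alpha$ of the standard translation flow, namely the uniform translation $T \mapsto T - \alpha$, which proves the theorem. The main obstacle is the middle step: one must verify that the deformation class attached to the respeeded flow $\Phi$ is genuinely $[\mu]$, so that the hypothesis on $[\mu]$ is exactly the input required by the Julien--Sadun criterion, and one must keep careful track of the overall rescaling, confirming that conjugacy of flows ``up to uniform rescaling'' still delivers a semi-conjugacy of $F$ to an honest uniform translation at time one rather than merely to a respeeded flow.
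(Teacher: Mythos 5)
Your proposal is correct and follows essentially the same route as the paper: the paper's proof cites Theorem~\ref{irr implies semiconj} for the semi-conjugacy of $F$ to uniform translation on $\Omega_\mu$ (your steps with $h$ and $g$ are exactly the content of that theorem's proof, which does the shape change $S$ and then applies Theorem~\ref{poincare}), and then invokes Lemma~\ref{JS18-lemma} (the Julien--Sadun criterion, with the conjugacy homotopic to $S^{-1}$) to conjugate translation on $\Omega_\mu$ to a uniform translation on $\Omega$ before composing. The one point you flag as an obstacle --- that the deformation class of the rectifying shape change is genuinely $[\mu]$ --- is automatic in the paper's formulation, since the flow is constructed as the pullback of translation on $\Omega_\mu$ itself (and in any case follows from the uniqueness of irrational rotation classes, Theorem~\ref{thm-unique}).
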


The converse to Theorem \ref{main4-1} is false. Some tiling spaces, such as those that come from 
substitutions, admit complicated self-homeomorphisms. If $F$ has irrational rotation class 
$[\mu]$ and $G: \Omega \to \Omega$ is such a homeomorphism, then
 $F'=G^{-1} \circ F \circ G$ has rotation class $[G^*\mu]$. If $F$ is semi-conjugate (by a map $J$) to
uniform translation, then $F'$ is also semi-conjugate (by $G^{-1} \circ J \circ G$) to uniform translation. However, it is possible to have $[\mu] \in \R \, dx \oplus H^1_{AN}(\Omega,\R) \subset H^1(\Omega,\R)$
without having $[G^*\mu]\in \R \, dx \oplus H^1_{AN}(\Omega,\R) \subset H^1(\Omega,\R)$. 
To avoid the difficulties posed by self-homeomorphisms of $\Omega$, we must restrict the form of the semi-conjugacy:

\begin{theorem} \label{main5-1} Suppose that $F \in \F(\Omega)$ and that $[\mu]$ exists and is irrational. $F$ is semi-conjugate to a uniform translation
on $\Omega$, via a
semi-conjugacy that sends each path component of $\Omega$ to itself, if and only if 
$[\mu] \in \R \, dx \oplus H^1_{AN}(\Omega,\R) \subset H^1(\Omega,\R)$.
\end{theorem}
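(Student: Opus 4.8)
The plan is to prove both implications by working leaf-by-leaf. Since $\Omega$ is minimal and non-periodic, the translation action $T$ is free, so every path component is a single (injective) translation orbit, and every self-map that preserves path components has the form $\omega\mapsto T_{\alpha(\omega)}\omega$ for a continuous $\alpha:\Omega\to\R$ (continuity of the ``return time'' is immediate from the flow-box coordinates of the tiling flow). In particular $F(\omega)=T_{f(\omega)}\omega$ for its sPE displacement $f$, and the rotation form $\mu$ is characterized by $\int_{\omega}^{F\omega}\mu=1$ for every $\omega$, the integral being taken along the leaf. I will use the Julien--Sadun criterion \cite{Julien-Sadun} in the following form: a class $\beta\in\check H^1(\Omega,\R)$ lies in $H^1_{AN}(\Omega,\R)$ if and only if it has an sPE representative whose integral along every leaf-path is uniformly bounded; and two shape deformations give translation actions that are topologically conjugate up to a uniform rescaling, by a leaf-preserving homeomorphism, precisely when their difference lies in $\R\,dx\oplus H^1_{AN}(\Omega,\R)$.

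For the \emph{if} direction I would simply track leaf-preservation through the proof of Theorem \ref{main4-1}. When $[\mu]\in\R\,dx\oplus H^1_{AN}$, Theorem \ref{main2-1} supplies a leaf-preserving semi-conjugacy $h$ from $(\Omega,F)$ to uniform translation on the deformed space $\Omega_\mu$, and \cite{Julien-Sadun} supplies a leaf-preserving conjugacy (up to rescaling) $\Psi:\Omega_\mu\to\Omega$; the composite $\Psi\circ h$ is then a path-component-preserving semi-conjugacy of $F$ to a uniform translation on $\Omega$. The only thing to check is that $h$ and $\Psi$ each send orbits to orbits, which holds because $h$ is built as a time-change along leaves and $\Psi$ is a shape deformation.

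The \emph{only if} direction is where the path-component hypothesis does real work, and I would handle it by a direct bounded-cocycle computation rather than by inverting the semi-conjugacy. Let $J$ be a path-component-preserving semi-conjugacy with $J\circ F=T_s\circ J$, and write $J(\omega)=T_{\eta(\omega)}\omega$ with $\eta\in C(\Omega)$ as above. Freeness of $T$ turns the intertwining relation into the cohomological equation
\begin{equation}
f(\omega)=s-\bigl(\eta(F\omega)-\eta(\omega)\bigr),
\end{equation}
so the displacement of $F$ differs from the constant $s$ by an $F$-coboundary with continuous (hence bounded) transfer function $\eta$. Summing over $n$ iterates and using $F^n\omega=T_{S_nf(\omega)}\omega$ with $S_nf=ns-(\eta\circ F^n-\eta)$, the (oriented) standard length from $\omega$ to $F^n\omega$ stays within $2\|\eta\|_\infty$ of $ns$, while closedness of $\mu$ along the leaf and the defining property give $\int_\omega^{F^n\omega}\mu=n$ exactly. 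Subtracting, the integral of $\mu-\tfrac1s\,dx$ along the leaf-path from $\omega$ to $F^n\omega$ equals $\tfrac1s(\eta(F^n\omega)-\eta(\omega))$, which is bounded uniformly in $n$ and $\omega$; since these endpoints have standard positions comparable to $ns$ and the primitive of the continuous form $\mu-\tfrac1s\,dx$ is Lipschitz, interpolating over the bounded gaps between consecutive iterates shows $\int_\omega^{T_L\omega}(\mu-\tfrac1s\,dx)$ is bounded for all $L$ and all $\omega$.

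By the Julien--Sadun characterization this means $[\mu]-\tfrac1s[dx]\in H^1_{AN}(\Omega,\R)$, hence $[\mu]\in\R\,dx\oplus H^1_{AN}(\Omega,\R)$, as required. (Note $s\neq0$: otherwise $J$ would collapse each $F$-orbit to a bounded set while $\int_\omega^{F^n\omega}\mu=n\to\infty$, impossible for the continuous form $\mu$.) The main obstacle I anticipate is not this computation but the two translation steps flanking it: first, reconciling the combinatorial, de-Rham-style definition of the rotation form with the leaf-integral identity $\int_\omega^{F\omega}\mu=1$ that the telescoping argument needs; and second, citing \cite{Julien-Sadun} in exactly the form used here, namely that uniform boundedness of leaf-integrals is equivalent both to membership in $H^1_{AN}$ and to rescaling-conjugacy by a leaf-preserving homeomorphism. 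It is precisely the path-component hypothesis that guarantees $J$ has the fibered form $T_{\eta(\omega)}\omega$ with $\eta$ globally defined and continuous; dropping it reintroduces the self-homeomorphisms $G$ of the remark after Theorem \ref{main4-1}, which can move $[\mu]$ out of $\R\,dx\oplus H^1_{AN}$ and break the cohomological equation above.
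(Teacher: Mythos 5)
Your ``if'' direction is essentially the paper's (compose the semi-conjugacy to $\Omega_\mu$ with the Julien--Sadun conjugacy homotopic to $S^{-1}$), and the skeleton of your ``only if'' direction --- use the path-component hypothesis and Kwapisz's continuity result \cite{kwapisz2010topological} to write $J(\omega)=T_{\eta(\omega)}\omega$ with $\eta$ continuous, turn $J\circ F=T_s\circ J$ into the cohomological equation $f=s-(\eta\circ F-\eta)$, and deduce a uniform bound --- is sound and genuinely different from the paper's argument. But the step you lean on to finish, the ``defining property'' $\int_\omega^{F\omega}\mu=1$ (hence $\int_\omega^{F^n\omega}\mu=n$ exactly), is false, not merely a translation issue to be reconciled later. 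A rotation form is defined only by combinatorial inequalities at returns: if $T-x_1$ and $T-x_2$ agree out to radius $R$ and $n_-<\int_{x_1}^{x_2}\mu<n_+$, then $x_2$ lies strictly between $f^{n_-}(x_1)$ and $f^{n_+}(x_1)$. This says nothing about the integral from $x$ to $f(x)$, whose endpoints need not lie in matching patches. Already for a circle map with $\mu=dx/\rho$ one has $\int_x^{f(x)}\mu=(f(x)-x)/\rho\neq 1$ in general; the exact identity holds only when $F$ is the time-1 map of a flow with $\mu=dx/v$, and the whole point of the theorem is that $F$ need not be such a map. Consequently your subtraction step, and also your parenthetical argument that $s\neq 0$, collapse as written.

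The gap is fillable, and the repaired argument is attractive. Your cohomological equation gives exactly $|f^n(x)-x-ns|\le 2\|\eta\|_\infty$, i.e.\ $F$ is $\rho$-bounded with $\rho=s$. (For $s\neq0$: existence of a rotation form forces $F$ to have no fixed points, since a fixed point $x_0$ together with any return $x_2$ would violate the defining inequalities; then Theorem \ref{rho-exists} gives a nonzero rotation number, whereas $s=0$ would make all displacements bounded and the rotation number zero.) Now invoke Theorem \ref{theorem9}: for a fixed-point-free $F$ with rotation form $\mu$, $\rho$-boundedness is equivalent to $\mu-dx/s$ being asymptotically negligible, i.e.\ $[\mu]\in\R\,dx\oplus H^1_{AN}(\Omega,\R)$. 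The proof of Theorem \ref{theorem9} is precisely where the comparison between $\int\mu$ and the iteration count is carried out correctly --- via returns to matching patches occurring with bounded gaps --- so it supplies exactly what your false identity was standing in for. (Also note the ``bounded leaf integrals iff asymptotically negligible'' criterion you attribute to \cite{Julien-Sadun} is really the Gottschalk--Hedlund statement from \cite{Kellendonk-Sadun14}; what the paper takes from \cite{Julien-Sadun} is Lemma \ref{JS18-lemma}, about conjugacies homotopic to $S^{-1}$.) By contrast, the paper's own ``only if'' proof never extracts a bounded cocycle: it builds the limsup semi-conjugacy $\tilde J$, proves that the given semi-conjugacy factors through $\tilde J$, shows the factor map is a homeomorphism using minimality, and only then applies Lemma \ref{JS18-lemma}. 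Your route, once repaired through Theorem \ref{theorem9}, bypasses that factoring lemma entirely and is arguably shorter.
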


Theorem \ref{main4-1} is of course a corollary of Theorem \ref{main5-1}. We will prove Theorem \ref{main5-1} in Section \ref{sec-Denjoy}.

Theorem \ref{main5-1} is a version of Poincar\'e's theorem that replaces the bounded-mean motion hypothesis on $F$ by a
cohomological condition that depends \emph{only} on $[\mu]$. In general, checking whether a given map has bounded mean motion 
is difficult, while the cohomological condition in Theorem \ref{main5-1} can often be checked with standard tools, 
especially when the tiling space comes from a substitution or a cut-and-project scheme.  The main challenge in applying this theorem lies in developing methods to compute the rotation class of a given map. 

\section{Background and precise definitions}\label{sec-defn}
\subsection{Tilings}

In one dimension, a tile is a pair $t=\{I,\ell\}$, where $I$ is a closed interval and $\ell$ is a label. We assume that the labels are
drawn from a finite {\em alphabet}, and that
tiles with the same label always have intervals of the same length. 
If $t = \{[x_1,x_2], \ell\}$ is a tile,
and if $s \in \R$, then we define $t-s = \{[x_1-s, x_2-s], \ell\}$. 
That is, translating a tile means moving its support without changing its label. 
A {\em tiling} is a collection of tiles, intersecting only at their boundaries, 
whose union is all of $\R$. If 
$T = \{t_i\}$ is a tiling and $s \in \R$, then $T-s = \{t_i-s\}$. We sometimes denote the
action of translations on tilings by $\Gamma$, so $\Gamma_s(T) := T-s$. 
Tilings whose tiles satisfy the above assumptions are said to have {\em finite local
complexity}, or FLC. \footnote{The FLC condition is usually defined in terms of local patches, but in one dimension it is equivalent to
simply having finitely many possible kinds of tiles.}

If $T$ and $T'$ are tilings built on the same set of possible tiles, consider the set of $\epsilon \in (0,1)$ 
such that there exist $s_{1,2} \in \R$
with $|s_i|  \le \epsilon/2$, and such that $T-s_1$ and $T-s_2$ agree exactly on the interval $[-\epsilon^{-1}, \epsilon^{-1}]$. We then define
the distance $d(T,T')$ to be the infimum of such $\epsilon$'s, or 1 if no such $\epsilon$ exists. That is, $d(T,T') \le \epsilon$ if 
$T$ and $T'$ agree on $[-\epsilon^{-1}, \epsilon^{-1}]$ up to translations by up to $\epsilon/2$. With the topology defined by this metric,
the set of all possible tilings by the fixed tile set is a compact space on which $\R$ acts by translation, i.e., an abstract dynamical system.

The {\em orbit} of a tiling $T$ is the set $\{T-s | s \in \R\}$. For a fixed tiling $T$, we often identify the orbit of $T$ with a copy of 
$\R$ by $T-s \leftrightarrow s$. The closure of the orbit of $T$ is called the {\em continuous hull} of $T$, or the {\em tiling space} of $T$, 
or simply the {\em orbit closure}, and is denoted $\Omega_T$. This is a dynamical system in its own right. A tiling $T'$ is in $\Omega_T$
if and only if every pattern that appears in $T'$ appears somewhere in $T$. 

We next consider the local topology of $\Omega_T$. If $T' \in \Omega_T$ and $d(T'',T')<\epsilon$, then $T''$ and $T'$ agree on a big ball around the origin, up to a small translation. A neighborhood of $T'$ is then determined by a small real number 
(of size $<\epsilon$) describing the translation, and a point in a totally disconnected space describing the possible extensions of the
tiling beyond the big ball. 

$T$ is said to be {\em repetitive} if, for each finite pattern $P$ that appears in $T$, there is a length $L_P$ such that every interval of 
length $L$ in $T$ contains at least one copy of $P$. This is equivalent to $\Omega_T$ being a minimal dynamical system. That is, if $T$ is repetitive and $T' \in 
\Omega_T$, then $\Omega_{T'} = \Omega_T$ and the set of patterns that appear in $T'$ (sometimes called the {\em language} of $T'$) 
is the same as the set of patterns that appear in $T$.  Since all the tilings in $\Omega_T$ have the same orbit closure, we usually denote their common orbit closure as $\Omega$, without any subscripts. In this case, the totally disconnected set described in the previous paragraph is 
actually a Cantor set. 

In this paper, we only consider tilings that are one-dimensional, have FLC, and are repetitive.

\subsection{Pattern-equivariant cohomology}

Let $T$ be a tiling, let $\phi: \R \to \R$ be a continuous function, and let $R>0$. We say that $\phi$ is {\em pattern equivariant (PE) with radius $R$} with respect to
$T$ if $\phi(x_1)=\phi(x_2)$ for all pairs $(x_1,x_2)$ of points such that $T-x_1$ and $T-x_2$ agree exactly on $[-R,R]$. That is, the value of $\phi$ at a point $x \in \R$ is determined exactly by the pattern of $T$ on $[x-R,x+R]$. A function $\phi$ is called {\em weakly PE} (wPE) if it is the uniform limit of 
PE functions, meaning that for any $\epsilon >0$ there exists an $R>0$ such that $\phi(x)$ is determined to within $\epsilon$ by 
the pattern of $T$ on $[x-R,x+R]$. Here, we will refer to PE functions as {\em strongly PE} (sPE) to distinguish them from wPE functions. With respect to the local product structure of $\Omega$, sPE functions are continuous in the $\R$ direction and locally constant in the Cantor direction, while wPE functions are merely continuous in both directions.

%The term PE by itself, 
%means strongly PE.

Using the identification of $\R$ with the orbit of a tiling, we extend the ideas of strong and weak pattern equivariance to functions
on $\Omega$. Let $T$ be a reference tiling. Suppose $g$ is a map on the orbit of $T$   whose {\em displacement} $\phi_T(x) : = g(T-x) - x$  is sPE (or wPE). Then $g$ extends by continuity to a map $G$ on all of $\Omega$, such that $G$ restricted to every reference tiling $T$ has sPE (or wPE) displacement.

In addition to functions with sPE displacement, we can also consider sPE differential forms, either on $\R$ with respect to a reference tiling $T$, or on $\Omega$. Since the derivative of a function with sPE displacement is an sPE  differential forms, we define the first PE 
cohomology\footnote{In the literature, ``pattern equivariant'' usually means sPE. The term ``PE cohomology'' 
was defined 
before the theory of wPE forms was developed.} of $T$ to be 
\be H^1_{PE,T} = \frac{\hbox{sPE 1-forms}}{d(\hbox{sPE functions})}.
\ee
Kellendonk and Putnam \cite{Kellendonk03, Kellendonk-Putnam06} (see also \cite{Sadun07}) proved that:
\begin{prop} \label{same-cohomology}
If $T$ is a repetitive FLC tiling, then $H^1_{PE,T}$ is isomorphic to the 
real-valued \v Cech cohomology
$\check H^1(\Omega_T,\R)$.
\end{prop}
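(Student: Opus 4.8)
The plan is to realize $\Omega_T$ as an inverse limit of finite CW complexes and to match sPE differential forms on $\R$ with ordinary differential forms on these finite complexes. This reduces the statement to two standard facts, namely the continuity of \v Cech cohomology under inverse limits and the agreement of \v Cech and de Rham cohomology on finite complexes. First I would recall the G\"ahler (and, for substitution tilings, Anderson--Putnam) construction, which presents the hull as an inverse limit
$$\Omega_T \cong \varprojlim \left( \Gamma_0 \xleftarrow{f_1} \Gamma_1 \xleftarrow{f_2} \Gamma_2 \xleftarrow{f_3} \cdots \right),$$
where each approximant $\Gamma_n$ is a finite graph (a compact branched $1$-manifold) whose points record the pattern of the tiling out to combinatorial radius $n$ around the origin, and whose bonding maps $f_n \colon \Gamma_n \to \Gamma_{n-1}$ forget the outermost layer of pattern. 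The key geometric fact is that the projection $\pi_n \colon \Omega_T \to \Gamma_n$ collapses precisely the information beyond radius $n$: two tilings in $\Omega_T$ have the same image under $\pi_n$ exactly when they agree (up to the relevant translation) out to radius $n$. Restricting $\pi_n$ to the orbit of $T$ and using the identification of that orbit with $\R$, a point $x \in \R$ and its image $\pi_n(T-x)$ carry exactly the data that an sPE object of radius $n$ is allowed to see.

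Second, I would invoke the two standard cohomological facts. Because $\Omega_T$ is an inverse limit of compact Hausdorff spaces, \v Cech cohomology is continuous, so
$$\check H^1(\Omega_T, \R) \cong \varinjlim_n \check H^1(\Gamma_n, \R).$$
Because each $\Gamma_n$ is a finite CW complex, its \v Cech cohomology coincides with its de Rham cohomology, $\check H^1(\Gamma_n, \R) \cong H^1_{dR}(\Gamma_n)$, where forms on the branched manifold are required to be smooth on each edge and to satisfy the natural matching conditions at the branch vertices.

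The heart of the argument is to identify $\varinjlim_n H^1_{dR}(\Gamma_n)$ with $H^1_{PE,T}$. Pulling back along $\pi_n$ and restricting to the orbit of $T$ sends a smooth $1$-form on $\Gamma_n$ to an sPE $1$-form on $\R$ of radius $n$, and a function on $\Gamma_n$ to an sPE function of radius $n$; this pullback intertwines the exterior derivatives, $\pi_n^* \circ d = d \circ \pi_n^*$. Conversely, any sPE $1$-form or function of radius $R$ is constant on the fibers of $\pi_n$ for every $n \ge R$, hence descends to $\Gamma_n$. Thus the pullbacks assemble into an isomorphism from $\varinjlim_n \{\text{forms on } \Gamma_n\}$ onto the sPE forms, compatible with $d$, and passing to cohomology gives
$$H^1_{PE,T} = \frac{\text{sPE } 1\text{-forms}}{d(\text{sPE functions})} \cong \varinjlim_n \frac{\{1\text{-forms on } \Gamma_n\}}{d\{\text{functions on } \Gamma_n\}} = \varinjlim_n H^1_{dR}(\Gamma_n).$$
Composing with the two facts above yields $H^1_{PE,T} \cong \check H^1(\Omega_T, \R)$.

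The step I expect to require the most care is this last identification, and specifically the matching of the two equivalence relations. It is not enough that the sPE forms be the union of pullbacks of forms from the $\Gamma_n$; I must check that an sPE $1$-form $\mu$ is $d$ of an sPE function if and only if its descent to $\Gamma_n$ is exact for some (equivalently, all sufficiently large) $n$. The ``if'' direction is immediate, but the ``only if'' direction requires knowing that the primitive, an sPE function of some possibly larger radius $R' \ge n$, is the pullback of a genuine function on $\Gamma_{R'}$, so that exactness is always detected at a finite level. A related subtlety is that functions and primitives on the branched manifolds must respect the branching: where several edges meet, a function must be single-valued and a primitive must be consistent along all incoming edges. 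Verifying that the combinatorial constraints defining sPE forms correspond precisely to these branch-matching conditions, so that no forms or relations are lost in either direction, is the technical core of the proof.
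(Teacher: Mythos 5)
The paper itself offers no proof of this proposition: it is quoted from Kellendonk--Putnam and Sadun, and your argument is precisely the standard one in those references --- G\"ahler's presentation of $\Omega_T$ as an inverse limit of collared branched $1$-manifolds, continuity of \v Cech cohomology under inverse limits of compact Hausdorff spaces, and the identification of sPE forms with forms pulled back from the approximants. Your proposal is correct, and it handles the genuinely delicate point properly: exactness is detected at a finite level because any sPE primitive of radius $R'$ is itself constant on fibers of $\pi_m$ for $m$ large, hence descends to $\Gamma_m$ (the only cosmetic caveat being the conversion between geometric radius and combinatorial collaring depth, harmless since tile lengths are bounded below, and the fact that the ``de Rham theorem'' for a finite graph is an elementary computation rather than the classical smooth-manifold statement).
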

It may happen that an sPE 1-form $\alpha$ is not the derivative of a function with sPE displacement, but is the derivative of a { wPE} function.
In that case we say that the class $[\alpha] \in H^1_{PE,T}$ is {\em asymptotically negligible}. (This does not depend on which representative
we pick for $[\alpha]$.) The asymptotically negligible classes, or more precisely the corresponding elements of $\check H^1(\Omega, \R)$, form a subspace of $\check H^1(\Omega, \R)$ denoted $H^1_{AN}(\Omega)$. Both $\check H^1(\Omega,\R)$ and $H^1_{AN}(\Omega, \R)$ are well-studied, and many techniques
are known for computing them. (See \cite{Sadun15} and references therein.)

\subsection{Self-homeomorphisms and related functions}

Let $\Omega$ be a tiling space and let $T\in \Omega$ be an arbitrary reference tiling. As always, we suppose that $\Omega$ is compact and minimal, or equivalently that $\Omega = \Omega_T$, where $T$ has FLC and is repetitive. Let $F: \Omega \to \Omega$ be a homeomorphism homotopic to the identity. $F$ must then take the path-component of $T$ to itself. 
However, the path-component of $T$ is the same as the orbit of $T$, so $F(T) = T - \Phi(T)$ for some continuous function $\Phi: \Omega \to \R$ (see \cite{kwapisz2010topological}). There must also be a homeomorphism $f_T: \R \to \R$ such that, 
for all $x \in \R$, $F(T-x) = T- f_T(x)$.
We furthermore define $\phi_T(x)=f_T(x)-x$. That is, $F$ specifies where a tiling goes, while the {\em displacement function} $\Phi$ specifies how far it moves. If we identify
the orbit of $T$ with $\R$ by $T-x \leftrightarrow x$, then $F$ becomes $f_T$ and $\Phi$ becomes $\phi_T$. We will typically use capital letters to define maps on $\Omega$ and lower case letters to define maps on $\R$.
When the reference tiling $T$ is clear, we will omit the subscripts on $f$ and $\phi$.

In this paper we only consider maps $F$ for which $\Phi$ is sPE. This then implies that for every 
$T \in \Omega$, $\phi_T$ is sPE with respect to $T$. We will denote the set of self-homeomorphisms of $\Omega$ with sPE displacement by $\mathcal{F}(\Omega)$, or by $\F$ for short. Note that if $\phi_T$ is sPE, then $\phi_T$ is necessarily bounded, so $f_T$ is orientation preserving. 
%We say that a map $F$ in $\mathcal{F}$ is \emph{orientation preserving} if for every $T$ in $\Omega$, the map $f_T$ is %orientation preserving. We denote the maps in $\mathcal{F}$ that are orientation preserving by $\mathcal{F}$.

\subsection{Rotation numbers, rotation forms, and rotation classes}

The goal of rotation theory is to understand what happens when you iterate a self-homeomorphism many times. For a lift $F: \R \to \R$ of a circle map $f: S^1 \to S^1$ this is described by a {\em translation number}, often denoted $\rho$ and defined as 
\be \rho = \lim_{n \to \infty} \frac{F^n(x)-x}{n}. \ee 
The translation number indicates how far you go on average per unit time, or equivalently how long it takes to travel a
certain distance. 

Now suppose that $\rho > 0$. If $x_1 \in \R$ and $L$ is a positive integer, then $x_1$ and $x_2 = x_1+L$ correspond to the same point on $S^1$. If $n_\pm$ are integers such that 
$n_- < L/\rho < n_+$, then $F^{n_-}(x_1) < x_2 < F^{n_+}(x_1)$.  In other words, integrating $dx/\rho$ from $x_1$ to $x_2$ gives
sharp upper and lower bounds on how many iterations are required to bring $x_1$ up to and then past $x_2$. Note that these estimates {\em only} apply when $x_2-x_1 \in \Z$. If we imagine the universal cover of $S^1$
as being a periodic tiling with period 1, the estimates apply whenever the patterns around $x_1$ and $x_2$ agree out to distance 1.

For maps $F: \Omega \to \Omega$ of tiling spaces, the analogue of the translation number $\rho$ is easy. We say that $\rho$ is the {\em rotation number} of $F$ if, for every tiling
$T$ and starting point $x$, $\lim_{n \to \infty} \frac{f_T^n(x)-x}{n} = \rho$.
In \cite{aliste2010translation}, the first author showed:
\begin{theorem} \label{rho-exists}
Let $(\Omega,\Gamma)$ be a uniquely ergodic and minimal one dimensional tiling space. Suppose that $F$ is a homeomorphism in $\mathcal{F}$ without fixed points. Then, the rotation number $\rho$ of $F$ exists and is different from zero. 
\end{theorem}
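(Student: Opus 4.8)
The plan is to reduce the statement to the convergence of an additive displacement cocycle and then to exploit the unique ergodicity of the \emph{flow} $\Gamma$ (rather than of $F$, which need not be uniquely ergodic) through a first-passage reformulation.

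First I would record the elementary reductions. Writing $F(S)=\Gamma_{\Phi(S)}(S)=S-\Phi(S)$, a point $S$ is fixed exactly when $\Phi(S)=0$; since $F$ has no fixed points, $\Phi$ is nowhere zero, and as $\Omega$ is connected (a minimal FLC tiling space is connected) the continuous function $\Phi$ has constant sign. Normalizing, I assume $\Phi\ge\delta>0$, and set $M=\max_\Omega\Phi<\infty$ by compactness. Then each lift $f_T$ satisfies $f_T(x)\ge x+\delta$, and an induction gives $f_T^n(x)-x=A_n(T-x)$, where $A_n(S):=\sum_{k=0}^{n-1}\Phi(F^kS)$ is the total displacement after $n$ steps; note $F^k(S)=\Gamma_{A_k(S)}(S)$, so the $F$-orbit of $S$ samples the $\Gamma$-orbit of $S$ at the increasing times $A_0<A_1<\cdots$, with gaps in $[\delta,M]$. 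In particular $n\delta\le A_n(S)\le nM$, so any limit of $A_n(S)/n$ already lies in $[\delta,M]$ and is nonzero; the entire content is to show this limit exists and is independent of $S$.

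The key device is the inverse, \emph{first-passage} cocycle. For $L>0$ let $b_L(S)$ be the number of $F$-steps needed for the $\Gamma$-orbit of $S$ to advance flow-length $L$, so that $L/M\le b_L(S)\le L/\delta$ and, up to the bounded overshoot, $b_{A_n(S)}(S)=n$. Because $F^{b_L(S)}(S)=\Gamma_P(S)$ with $P\in[L,L+M)$, the stepping beyond flow-position $L$ is exactly the fresh $F$-orbit of $\Gamma_P(S)$, and since a flow-shift of size $<M$ changes the step count by at most $C_0:=\lceil M/\delta\rceil+1$, one obtains the almost-additivity
\[ \bigl|\,b_{L_1+L_2}(S)-b_{L_1}(S)-b_{L_2}(\Gamma_{L_1}S)\,\bigr|\le C_0 . \]
Thus $b_L$ is an almost-additive cocycle over the uniquely ergodic flow $\Gamma$. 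Applying the uniform subadditive ergodic theorem for uniquely ergodic systems (which for an almost-additive cocycle yields convergence for \emph{every} $S$, uniformly), I get $b_L(S)/L\to c$ for a constant $c\in[1/M,1/\delta]$. Inverting the relation between $b$ and $A$ then gives $A_n(S)/n\to\rho:=1/c$ uniformly in $S$; that is, $\lim_n(f_T^n(x)-x)/n=\rho$ for all $T$ and $x$, with $\rho\in[\delta,M]$, so $\rho\neq0$.

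I expect the main obstacle to be upgrading the ergodic convergence from almost-everywhere to \emph{uniform} (equivalently, showing $\sup_S b_L(S)-\inf_S b_L(S)=o(L)$): this is precisely the step that fails for mere compactness or minimality and genuinely requires unique ergodicity of the flow, since the dependence of $b_L(S)$ on the combinatorial \emph{phase} of $S$ along its orbit is not itself a flow observable and cannot be read off from pattern frequencies alone. Secondary technical points I would need to handle carefully are the integer-valued (hence discontinuous) nature of $b_L$, which forces the use of the almost-additive rather than the classical subadditive ergodic theorem or of a continuous real-valued surrogate, and the verification of the overshoot estimate, including that a flow-shift of bounded size perturbs the step count by a bounded amount uniformly in $S$.
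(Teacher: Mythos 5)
The paper gives no proof of Theorem \ref{rho-exists} at all: it is imported verbatim from \cite{aliste2010translation} (``In \cite{aliste2010translation}, the first author showed:''), so there is no internal argument to compare against, and your proposal must be judged against the cited literature. On its own terms your outline is essentially correct, and its key move is a genuinely nice one: since $F$ itself need not be uniquely ergodic, you do not try to average the displacement cocycle $A_n(S)=\sum_{k<n}\Phi(F^kS)$ over $F$; instead you invert it into the first-passage count $b_L$, which is a cocycle over the translation flow $\Gamma$, where unique ergodicity is available. Your almost-additivity bound is correct and is proved exactly as you indicate: all comparisons take place inside a single $\Gamma$-orbit, where $f_S$ is an increasing map with $\delta\le f_S(x)-x\le M$, so shifting the basepoint forward by less than $M$ changes any first-passage count by at most $\lceil M/\delta\rceil$. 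The known proofs (Kwapisz \cite{Kwapisz2000} for Bohr almost periodic displacements, adapted to pattern-equivariant displacements in \cite{aliste2010translation}) instead stay with the forward displacement cocycle over the map and exploit the fact that strong pattern equivariance forces orbits started at points with matching $R$-patterns to have \emph{identical} displacements for roughly $R/M$ iterations, feeding this into a sub/superadditivity argument together with uniform patch frequencies; your route trades that combinatorial comparison for ergodic-theoretic machinery.

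The two technical debts you flag are real and are the only places where work remains, but both are fillable. For continuity, replace $b_L(S)$ by the piecewise-linear inverse $N(S,L)$ of $u\mapsto A_{\lfloor u\rfloor}(S)+(u-\lfloor u\rfloor)\Phi(F^{\lfloor u\rfloor}S)$; since $\Phi\ge\delta$ this inverse is well defined, jointly continuous, within $1$ of $b_L$, and inherits almost-additivity. For the ergodic theorem, note that an almost-additive cocycle with constant $C$ yields both a subadditive family $N(\cdot,L)+C$ and a superadditive family $N(\cdot,L)-C$, so a Furman-type uniform upper bound applied to each side gives uniform two-sided convergence; alternatively one can run the Oxtoby phase-averaging argument directly for the flow, where the additivity defect contributes an error of order $C/L_0$ that vanishes as the block length $L_0\to\infty$. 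Neither statement appears in the standard references in precisely the $\R$-action, almost-additive form you invoke, so a complete write-up would have to include one of these reductions; with that done, your argument delivers the theorem (indeed with convergence uniform in $T$ and $x$, and $\rho\in[\delta,M]$, hence nonzero).
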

Observe that the assumption of unique ergodicity is essential. In Section \ref{sec-examples} we exhibit 
a map on a tiling space that meets all of the assumptions of this theorem except
unique ergodicity, and for which $\rho$ does not exist. Also observe that in our current context, the assumption of $F$ not having fixed points is equivalent to $\Phi$ not having zeros and (since $\Omega$ is compact) being bounded away from zero. In the case that $\Phi$ has zeros, strong pattern equivariance implies that in each orbit the set of zeros of $\Phi$ is relatively dense and thus the rotation number exists and is equal to $0$.  

We now turn to the analogue of $dx/\rho$. Let $\mu$ be an sPE 1-form on $\R$ with respect to a reference tiling $T$. 
We say that $\mu$ is a {\em rotation form} for $F$
if there exists a radius $R$ such that, for all pairs of points $x_1, x_2 \in \R$ such that $T-x_1$ and $T-x_2$ agree 
on a ball of radius $R$ around the origin, 
and for all integers $n_\pm$ such that $n_- < \int_{x_1}^{x_2} \mu < n_+$, then $x_2$ is strictly 
between $f_T^{n_-} (x_1)$ and $f_T^{n_+}(x_1)$. This definition is crafted to apply even when 
$F$ moves points backwards and when $\int_{x_1}^{x_2} \mu$ is an integer. 
In the typical case where $f_T$ moves points forwards and $\int_{x_1}^{x_2} \mu$ is not an integer, the estimates simplify to  
\be f_T^{\lfloor \int_{x_1}^{x_2} \mu \rfloor}(x_1) < x_2 < f_T^{\lfloor  \int_{x_1}^{x_2} \mu \rfloor + 1}(x_1), \ee
where $\lfloor \int_{x_1}^{x_2} \mu \rfloor$ denotes the greatest integer less
than or equal to $\int_{x_1}^{x_2} \mu$.

If $\mu$ is a rotation form with radius $R$ and $\nu = \mu + dg$ is cohomologous to $\mu$, where $g$ is a function with sPE displacement and radius $R'$, then we claim that $\nu$ is a rotation form with radius $\max(R, R')$. This is because if
$x_1$ and $x_2$ are points such that $T-x_1$ and $T-x_2$ agree on a ball of radius $\max(R,R')$, then $\int_{x_1}^{x_2} dg = g(x_2) - g(x_1) = 0$, so $\int_{x_1}^{x_2} \nu = \int_{x_1}^{x_2} \mu$. In other words, 
every representative of the cohomology class $[\mu] \in H^1_{T, PE} \simeq \check H^1(\Omega, \R)$ is also a rotation form, and we say that $[\mu]$ is a {\em rotation class}. 

The first question about rotation classes is to understand their basic relationship with rotation numbers. The next statement gives a natural answer when the tiling space is minimal and uniquely ergodic.
\begin{prop} \label{what-is-rho}
Let  $(\Omega,\Gamma)$ be a uniquely ergodic and minimal one dimensional tiling space. Suppose $F\in\mathcal{F}_+$ has displacement with sPE radius $R$, and that $F$ has no fixed points. Suppose also that $\mu$ is a rotation form for $F$.
Let $T$ be a tiling in $\Omega$, and suppose that $\{x_n\}_{n\in\N}$ is a set of points in $\R$ such that the ball of radius $R$ around the origin in $T-x_{n}$ agrees with 
the ball of radius $R$ around the origin in $T$, and such that $\lim_{n \to \infty} x_n = \infty$. Then 
\be \rho = \lim_{n\rightarrow\infty} \frac{x_n}{\int_{0}^{x_n}\mu}. \label{Prop7-eq1}\ee
\end{prop}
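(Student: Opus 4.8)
The plan is to trap each $x_n$ between two consecutive forward iterates of the origin under $f_T$, use the defining inequalities of the rotation form to show that the corresponding power is within distance $1$ of $\int_0^{x_n}\mu$, and then pass to the limit using the existence of the rotation number.

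Write $I_n = \int_0^{x_n}\mu$. Since $F \in \mathcal{F}_+$ has no fixed points, its displacement is bounded away from $0$ (as observed just after Theorem \ref{rho-exists}), so $f_T(x) > x$ for every $x$ and the iterates $f_T^k(0)$ increase strictly, with $f_T^k(0) \to +\infty$ as $k \to +\infty$ and $f_T^k(0) \to -\infty$ as $k \to -\infty$; in particular $\rho > 0$. Hence for each $n$ there is a unique integer $m_n$ with $f_T^{m_n}(0) \le x_n < f_T^{m_n+1}(0)$. I would first note that $m_n \to \infty$, since otherwise $f_T^{m_n+1}(0) > x_n$ would remain bounded along a subsequence, contradicting $x_n \to \infty$.

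The crux is the estimate $m_n \le I_n \le m_n + 1$. By hypothesis $T$ and $T - x_n$ agree on the ball of radius $R$ about the origin, so the defining property of the rotation form $\mu$ applies to the pair $(x_1, x_2) = (0, x_n)$. If $I_n < m_n$, pick an integer $n_- < I_n$ and set $n_+ = m_n$; then $n_- < I_n < n_+$, so $x_n$ lies strictly between $f_T^{n_-}(0)$ and $f_T^{m_n}(0)$, forcing $x_n < f_T^{m_n}(0)$ and contradicting the left bracketing inequality. Symmetrically, if $I_n > m_n + 1$, taking $n_- = m_n + 1$ and any integer $n_+ > I_n$ forces $x_n > f_T^{m_n+1}(0)$, contradicting the right bracketing inequality. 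Thus $|I_n - m_n| \le 1$, and in particular $I_n \to \infty$.

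To finish, I would divide the bracketing inequality by $I_n$ (positive for large $n$) to obtain
\[
\frac{f_T^{m_n}(0)}{I_n} \le \frac{x_n}{I_n} < \frac{f_T^{m_n+1}(0)}{I_n},
\]
and rewrite the outer terms as $\frac{f_T^{m_n}(0)}{m_n}\cdot\frac{m_n}{I_n}$ and $\frac{f_T^{m_n+1}(0)}{m_n+1}\cdot\frac{m_n+1}{I_n}$. Since $m_n \to \infty$, the existence of the rotation number gives both $f_T^{m_n}(0)/m_n \to \rho$ and $f_T^{m_n+1}(0)/(m_n+1) \to \rho$, while $m_n \le I_n \le m_n + 1$ forces $m_n/I_n \to 1$ and $(m_n+1)/I_n \to 1$. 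Both outer bounds therefore converge to $\rho$, and the squeeze theorem yields $x_n/I_n \to \rho$, which is \eqref{Prop7-eq1}. I expect the only delicate point to be the bracketing estimate: one must invoke the rotation-form definition in the correct direction and track carefully which inequalities are strict, but no analytic input beyond the definition of $\rho$ is required.
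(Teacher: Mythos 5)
Your proof is correct and is essentially the paper's own argument: both trap $x_n$ between consecutive iterates $f_T^{m_n}(0)$ and $f_T^{m_n+1}(0)$, use the rotation-form definition to show the index agrees with $\int_0^{x_n}\mu$ to within $1$, and then squeeze using $\rho = \lim_k f_T^k(0)/k$ from Theorem \ref{rho-exists}. The only (cosmetic, and in fact slightly advantageous) difference is the direction of the bookkeeping: the paper sets $k = \lfloor \int_0^{x_n}\mu \rfloor$ and deduces the bracketing of $x_n$, whereas you define $m_n$ from the bracketing and deduce $m_n \le \int_0^{x_n}\mu \le m_n+1$ by contradiction, which handles the edge case where the integral is exactly an integer without any extra fuss.
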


\begin{proof}
We already know, by Theorem \ref{rho-exists}, that $\rho$ exists, is unique, and can be computed as 
\be \rho = \lim_{k\rightarrow\infty}\frac{f_T^k(0)}{k}.
\label{Prop7-eq2}\ee 
By the definition of $\mu$, if $k=\lfloor{\int_0^{x_n}\mu}\rfloor$, then 
$f_T^k(0)<x_n<f_T^{k+1}(0)$. But then 
\be \frac{f^k_T(0)}{k+1} < \frac{x_n}{\int_0^{x_n}\mu} < 
\frac{f^{k+1}_T(0)}{k}. \label{Prop7-eq3}\ee
Taking a limit of (\ref{Prop7-eq3}) as $n \to \infty$ 
(and therefore $k \to \infty$) and 
applying (\ref{Prop7-eq2}) then gives (\ref{Prop7-eq1}).
\end{proof}
We will see in Example \ref{norotationnumber} in Section 
\ref{sec-examples} that when the tiling space is not uniquely 
ergodic, it is possible for a map to have a rotation class but 
no rotation number. Going further,  neither the existence nor 
the uniqueness of the rotation class is obvious. 
In Section \ref{sec-Poincare}, we will show that, with the 
added assumption of irrationality, rotation classes are in fact
unique. However, in Section \ref{sec-examples} we will  
construct a map for which the rotation class does not exist. 

\subsection{Collaring, shape changes and irrationality}

Suppose that $T$ is a 1-dimensional tiling with a finite 
alphabet ${\mathcal A}$. We can generate a new tiling $T_c$ 
using the same intervals as the tiles in $T$, only with a 
larger alphabet. For each tile $t \in T$, we replace the label 
$\ell \in \A$ with a triple $(\ell_-)\ell(\ell_+)$, where 
$\ell$ is the original label of $t$ and $\ell_\pm$ are the 
original labels of the tiles preceding and following $t$. A 
tile equipped with labels indicating its predecessor and 
successor is called a {\em collared tile}, and the process of 
relabeling is called {\em collaring}. The new alphabet $\A_c 
\subset \A^3$ is still finite, so the new tiling $T_c$ still 
has FLC. Furthermore, $T_c$ is repetitive if and only if $T$ is
repetitive. The obvious forgetful map $\Omega_{T_c} \to 
\Omega_T$ is a homeomorphism that commutes with translation. 
Since the translation actions on $\Omega_T$ and $\Omega_{T_c}$ 
are conjugate, the spaces $\Omega_T$ and $\Omega_{T_c}$ are 
equivalent for our purposes. 
One can also repeat the collaring process. For any radius $R$, 
it is possible to collar enough times that the label of each 
(collared) tile indicates the pattern of (ordinary) tiles 
around it out to distance at least $R$. 

Now suppose that $\mu$ is a positive 1-form that is sPE with 
radius $R$ with respect to a tiling $T$. Without loss of 
generality, suppose that the tiles 
in $T$ have been collared out to that same distance $R$. We 
construct a new tiling $T'$, 
with the same alphabet as 
$T$, by preserving the labels of the tiles while moving each 
vertex $x$ of $T$ to position $\int_0^x \mu$. The new length of a tile $t$ with endpoints $a$ and $b$ is $\int_a^b 
\mu$. Since $\mu$ is sPE with radius $R$, and since the tile labels describe the pattern of $T$ out to distance $R$, this integral depends only on the label of $t$. 

Note that this shape change operation does not commute with translation. However, it does 
map the orbit of $T$ to the orbit of $T'$, and the orbit closure $\Omega_T$ to 
$\Omega_{T'}$. The space $\Omega'$ is called the {\em shape change of $\Omega$ by $\mu$}. 
For the theory of shape changes in 1 dimension and higher dimensions see 
\cite{Clark-Sadun03, Clark-Sadun06}.

If $\mu$ and $\nu$ are different positive sPE 1-forms, then we 
can compare the spaces $\Omega_\mu$ and $\Omega_\nu$ obtained 
by doing shape changes by $\mu$ and $\nu$, respectively. It turns out that the translation actions on these spaces are topologically conjugate if and only if $\mu - \nu$ is asymptotically negligible \cite{Clark-Sadun06}.
If $\mu$ and $\nu$ are cohomologous, then there is an even stronger equivalence between $\Omega_\mu$ and $\Omega_\nu$, called {\em Mutual Local Derivability}, or MLD. In short, $\Omega_\mu$ is determined up to MLD by $[\mu] \in \check H^1(\Omega, \R)$ and is determined up to topological conjugacy by the image of $[\mu]$ in the quotient space $\check H^1(\Omega, \R)/\check H^1_{AN}(\Omega, \R)$. \cite{BK10}

Shape changes by negative 1-forms are defined similarly, and are orientation reversing. If $\mu$ changes sign but has a nonzero average value (where averaging requires unique ergodicity of $\Omega_T$), then there is form $\bar \mu$, cohomologous to $\mu$, that does not change sign; we 
can then do a shape change by $\bar \mu$. When $\Omega_T$ is uniquely ergodic, we can thus do shape changes by all cohomology classes whose average values are nonzero \cite{Julien-Sadun}.

\subsection{$\rho$-boundedness, irrationality and Poincare-like theorems}
If $f$ is a circle homemorphism and its rotation number $\rho$ is irrational, then 
Poincar\'e's Theorem  asserts that $f$ is semi-conjugate to a rotation by $\rho$. 
In this paper, we wish to understand and state Poincar\'e-like theorems in terms of 
the irrationality of the rotation class of a tiling homeomorphism. 

Suppose that $[\mu]$ is a cohomology class represented by a 
positive (or negative) sPE cochain $\mu$. Let $\Gamma_1: 
\Omega_\mu \to \Omega_\mu$ be the operator of translation by 1. We say that $[\mu]$ is {\em irrational} if $\Omega_\mu$ is 
minimal with respect to the action of $\Gamma_1$. This 
condition only depends on the cohomology class of $\mu$, since 
different representatives of the same class give translation 
actions that are topologically conjugate. Equivalently, $[\mu]$ is irrational if none of the topological eigenvalues of the 
translation action on $\Omega_\mu$ lie in $\Q - \{0\}$. 

We also speak of a rotation number $\rho$ being irrational if $\Gamma_\rho: \Omega \to \Omega$ is minimal.\footnote{Note that $\rho$ is a dimensionful quantity, having units of 
length. As such, the naive definition of $\rho$ being a irrational {\em number}
does not make sense.} This is equivalent to $[dx/\rho]$ being an irrational class, since
if $\mu = dx/\rho$, then $\Gamma_1$ on $\Omega_\mu$ is conjugate to $\Gamma_\rho$ on
$\Omega$. 

We say that $F$ is {\em $\rho$-bounded} if, for every $T \in \Omega$ and every
starting point $x$, the sequence $\{f_T^n(x) -x - n\rho\}$ is bounded. This is equivalent
to the quantity $f_T^n(x)-x-n\rho$ being uniformly bounded as a function of $n$ and $x$.
The main result of \cite{aliste2010translation} is
\begin{theorem}
\label{poincare}
Let $(\Omega,\Gamma)$ be a uniquely ergodic and minimal one dimensional tiling space. Let $F:\Omega\rightarrow\Omega$ be a orientation preserving 
homeomorphism with irrational rotation number $\rho$. Suppose furthermore that $F$ is $\rho$-bounded. Then $F$ is semi-conjugate to $\Gamma_\rho$.
\end{theorem}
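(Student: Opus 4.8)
The plan is to reduce the existence of the semi-conjugacy to the solvability of a single cohomological equation on $\Omega$, and then to use the minimality of $\Gamma_\rho$ (which is exactly the irrationality of $\rho$) to obtain surjectivity for free. Write $\psi = \rho - \Phi \in C(\Omega,\R)$, where $\Phi$ is the continuous sPE displacement of $F$. The key observation is that a continuous solution $g$ of
\be g\circ F - g = \psi = \rho - \Phi \ee
is precisely what is needed: setting $H(T') := T' - g(T') = \Gamma_{g(T')}(T')$ defines a continuous self-map $H\colon\Omega\to\Omega$, and a one-line computation using $F(T')=T'-\Phi(T')$ shows that $H\circ F = \Gamma_\rho\circ H$. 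Since $[\mu]$, and hence $\rho$, is irrational, $\Gamma_\rho$ is minimal, so $H(\Omega)$ is a nonempty closed $\Gamma_\rho$-invariant set and therefore equals $\Omega$. Thus any continuous $g$ solving the displayed equation automatically yields a surjective semi-conjugacy, and the whole problem is to produce such a $g$.

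Next I would recognize that the $\rho$-boundedness hypothesis is exactly the uniform boundedness of the Birkhoff sums of $\psi$ along $F$-orbits. Identifying the orbit of a reference tiling $T$ with $\R$, telescoping gives $\sum_{k=0}^{n-1}\Phi(F^k(T-x)) = f_T^n(x)-x$, whence
\be \sum_{k=0}^{n-1}\psi\bigl(F^k(T-x)\bigr) = n\rho - \bigl(f_T^n(x)-x\bigr) = -\bigl(f_T^n(x)-x-n\rho\bigr), \ee
which is uniformly bounded in $n$, $x$ and $T$ exactly when $F$ is $\rho$-bounded (taking $n\in\Z$, as $F$ is a homeomorphism). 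This is the hypothesis of a Gottschalk--Hedlund-type theorem: on a minimal system, uniform boundedness of the Birkhoff sums of a continuous function forces it to be a continuous coboundary. By Theorem \ref{rho-exists} we also know $\rho\neq 0$, which guarantees that the $f_T$-orbits run off to $\pm\infty$ and fixes the order structure we will need.

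The complication is that $F$ itself need not be minimal, since Denjoy-like behavior is possible. I would therefore first pass to a minimal set $M\subseteq\Omega$ for $F$ (which exists by Zorn's lemma), apply Gottschalk--Hedlund on $(M,F|_M)$ to obtain a continuous $g_M$ with $g_M\circ F - g_M = \psi|_M$, and build the semi-conjugacy $H_M\colon M \to \Omega$ as above; minimality of $\Gamma_\rho$ again forces $H_M(M)=\Omega$. It then remains to extend $H_M$ to all of $\Omega$. Here the order in the $\R$-direction is essential: each point of $\Omega\setminus M$ lies in a ``gap'' of an $F$-orbit, on which one declares $H$ to be constant in the flow direction, exactly as a classical Poincar\'e semi-conjugacy is constant on the wandering intervals in the complement of the minimal Cantor set.

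I expect the main obstacle to be this last extension together with the continuity of the assembled map. Producing the coboundary $g_M$ on the minimal set is the cleanest part; the delicate point is to show that $H$, glued from $H_M$ and the gap-filling, is continuous and monotone as a self-map of $\Omega$. Continuity in the $\R$-direction should follow because the image points are dense, a consequence of the irrationality of $\rho$, so that no jumps survive the construction; continuity in the transverse Cantor direction should follow from the sPE regularity of $\Phi$ together with unique ergodicity, which control $g$ uniformly across nearby tilings. Checking that these local pieces glue to a globally continuous semi-conjugacy, and that the result is independent of the choice of $M$ and of the reference tiling, is where the real work lies.
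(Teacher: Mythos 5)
Your reduction to the cohomological equation $g\circ F - g = \rho-\Phi$, the identification of $\rho$-boundedness with uniformly bounded Birkhoff sums, and the surjectivity-for-free argument via minimality of $\Gamma_\rho$ are all correct, and passing to a minimal set $M$ of $F$ to invoke Gottschalk--Hedlund is legitimate. But the extension step, which you correctly flag as the real work, contains a concrete missing idea that your sketch cannot supply: for the gap-filling to be well-defined, the two endpoint values must agree. Writing $h_M(x)=x+g_M(T-x)$ on $M\cap P$ for a path component $P$ identified with $\R$, your prescription ``declare $H$ constant in the flow direction on each gap $(a,b)$ of $M\cap P$'' only makes sense if $h_M(a)=h_M(b)$; otherwise there is no consistent constant and the glued map is forced to be discontinuous at one endpoint. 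On the circle, this gap-collapse follows from monotonicity plus surjectivity, but Gottschalk--Hedlund hands you a continuous solution with no monotonicity whatsoever: nothing in your argument rules out $x_1<x_2$ in $M\cap P$ with $h_M(x_1)>h_M(x_2)$. (The GH solution is in fact monotone, but the only way I see to prove that is to compare it, via uniqueness of coboundaries up to constants on a minimal set, with an explicit monotone solution --- i.e., to carry out the paper's construction anyway.) A secondary unproved step: your gap-filling tacitly assumes every path component meets $M$ in a set that is unbounded in both directions; this is true, but it requires noticing that $H_M$ preserves path components, so that $H_M(M)=\Omega$ forces $h_M(M\cap P)=\R$ for \emph{every} component $P$ --- an argument you never make, and without which ``each point of $\Omega\setminus M$ lies in a gap'' is unjustified.

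The proof the paper relies on (Theorem \ref{poincare} is quoted from \cite{aliste2010translation}, and its mechanism is recalled in Section \ref{sec-Denjoy}) sidesteps exactly these issues by writing the solution down explicitly: $\Psi(T)=\limsup_{n}\left\{\Phi^n(T)-n\rho\right\}$ and $\jmath(T)=T-\Psi(T)$ (the paper states this in the normalized picture $\rho=1$ on $\Omega_\mu$). This formula is defined on all of $\Omega$, with no minimal set needed; it solves the cohomological equation everywhere by the one-line shift identity $\Phi^{n}(F(T))=\Phi^{n+1}(T)-\Phi(T)$; and, crucially, it is automatically non-decreasing along each path component, because each $f_T^n$ is increasing and limsups preserve inequalities --- so gap-collapse comes for free from monotonicity plus surjectivity. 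The remaining work in \cite{aliste2010translation} then goes into continuity on an orbit and extension of $\jmath$ by continuity, using minimality and unique ergodicity. In short, your route buys continuity on $M$ cheaply from GH but loses the monotonicity that makes the rest of the argument run; to repair it you would either have to prove monotonicity of the GH solution directly (essentially redoing the limsup argument) or abandon the minimal-set detour and work with the explicit formula.
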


Thus, $\rho$-boundedness is a crucial property when dealing with Poincar\'e-like theorems. In practice, $\rho$-boundedness is rather difficult to check. Fortunately, the introduction of rotation classes allows us to understand $\rho$-boundedness in cohomological terms:
\begin{theorem}\label{theorem9}
Suppose that $F \in \F$ does not have fixed points. Let $\rho$ be the rotation number of $F$ and let $\mu$ be a rotation form for $F$. Then, the following assertions are equivalent:
\begin{enumerate}
    \item $F$ is $\rho$-bounded. 
    \item $\mu - \frac{dx}{\rho}$ is  asymptotically negligible. 
\end{enumerate}
\end{theorem}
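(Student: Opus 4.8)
The plan is to route both conditions through the single quantity $\int_x^{f_T^n(x)}\mu-n$, which I will show is \emph{unconditionally} bounded, and then to read off the equivalence with a cohomological (Gottschalk--Hedlund) argument. Write $M(x)=\int_0^x\mu$ and $h(x)=M(x)-x/\rho$. Since $\rho\neq 0$ (no fixed points) and, by unique ergodicity together with Proposition~\ref{what-is-rho}, the average of $\mu$ equals $1/\rho$, the sPE form $\beta:=\mu-dx/\rho$ has zero average and $h$ is its primitive; moreover $h'=\mu-1/\rho$ is bounded, so $h$ is Lipschitz. By the definition of asymptotic negligibility, condition (2) says exactly that $h$ is wPE. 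Replacing $\mu$ by a cohomologous positive form if necessary (this changes neither $[\mu]$, nor whether $\beta$ is AN, nor $\rho$, nor whether $F$ is $\rho$-bounded), and reversing orientation if $\rho<0$, I may assume $\mu>0$ and $\rho>0$, so that $f_T$ moves every point forward by an amount in $[c,C_\phi]$ with $c:=\inf_\Omega\Phi>0$ and $C_\phi:=\sup_\Omega\Phi$.

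The key lemma is a uniform bound
\[ \Big|\int_x^{f_T^n(x)}\mu-n\Big|\le C\qquad(n\ge 0,\ x\in\R,\ T\in\Omega),\]
with $C$ independent of $n,x,T$. Fix one radius-$R$ pattern occurring in $\Omega$ and let $\{p_j\}$ be its occurrences; by repetitivity the gaps are bounded above by some $L$ uniform over $\Omega$, and all the $T-p_j$ agree on the ball of radius $R$. For $n$ large enough that $[x,f_T^n(x)]$ meets $\{p_j\}$ (smaller $n$ give a bounded integral directly), bracket the orbit segment by the occurrence $p_i$ just right of $x$ and the occurrence $p_{i+k}$ just left of $f_T^n(x)$. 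Since $p_i$ and $p_{i+k}$ agree on radius $R$, the defining property of the rotation form gives $|\#\mathrm{iter}(p_i\to p_{i+k})-\int_{p_i}^{p_{i+k}}\mu|\le 1$, while $|\int_{p_i}^{p_{i+k}}\mu-\int_x^{f_T^n(x)}\mu|\le 2L\|\mu\|_\infty$ because both endpoints move by at most $L$. Finally, since each iterate advances by at least $c$, the numbers of iterates needed to cross the two length-$\le L$ gaps $[x,p_i]$ and $[p_{i+k},f_T^n(x)]$ are each at most $L/c$; monotonicity of $f_T$ then forces $|\#\mathrm{iter}(p_i\to p_{i+k})-n|\le 2L/c+2$. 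Combining the three estimates proves the lemma (the case $n<0$ is identical, run backwards).

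With the lemma the rest is algebra plus one standard fact. Put $A_n(x)=f_T^n(x)-x-n\rho$ and $\Delta_n(x)=h(f_T^n(x))-h(x)$. The definition of $h$ gives $\int_x^{f_T^n(x)}\mu=(f_T^n(x)-x)/\rho+\Delta_n(x)$, hence $A_n=\rho\big(\int_x^{f_T^n(x)}\mu-n\big)-\rho\,\Delta_n$. As the parenthesised term is bounded by the lemma, $A_n$ is uniformly bounded \emph{iff} $\Delta_n$ is. Next, $\Delta_n$ bounded is equivalent to $h$ bounded: one direction is trivial, and for the other one uses that the forward orbit $\{f_T^n(x)\}_{n\ge 0}$ is $C_\phi$-dense in $[x,\infty)$, so the Lipschitz bound lets one estimate the increment of $h$ over \emph{every} interval $[a,b]$ by $D+\mathrm{Lip}(h)\,C_\phi$ (where $D$ bounds $\Delta_n$), whence $h$ has bounded oscillation and is bounded. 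Thus condition (1) holds iff $h$ is bounded.

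It remains to identify ``$h$ bounded'' with ``$h$ wPE'', i.e.\ with condition (2). The implication wPE $\Rightarrow$ bounded is immediate. For the converse --- the one place where minimality and unique ergodicity are essential --- I invoke the Gottschalk--Hedlund theorem: $\beta=\mu-dx/\rho$ is a continuous (sPE) zero-average form for the minimal, uniquely ergodic translation flow, and such a form with bounded primitive is the coboundary of a \emph{continuous}, i.e.\ wPE, function; equivalently $[\beta]\in H^1_{AN}(\Omega)$. Passing to the canonical transversal turns this flow statement into the classical discrete Gottschalk--Hedlund theorem for the minimal return dynamics. This closes the chain (1) $\iff\Delta_n$ bounded $\iff h$ bounded $\iff h$ wPE $\iff$ (2). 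The main obstacle is the uniform bound of the key lemma: this is where the rotation-form hypothesis is genuinely used, and the delicate point is that the iterate counts must be compared at the \emph{agreeing} anchor points $p_i,p_{i+k}$, not between $x$ and $f_T^n(x)$ directly, since $f_T$ may severely distort distances along an orbit.
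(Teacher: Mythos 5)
Your proposal is correct, and it reaches the same destination with the same essential ingredients as the paper --- the Gottschalk--Hedlund theorem to translate asymptotic negligibility into boundedness of $\int\beta$, repetitivity to supply return points with bounded gaps, the rotation-form property applied at pairs of points whose patterns agree, and the positive lower bound on the displacement to convert length estimates into iterate-count estimates --- but it organizes them differently. The paper proves the two implications separately: for $(1)\Rightarrow(2)$ it picks a return point $x_3$ near $x_2$ matching the pattern at $x_1$ and bounds $I(x_1,x_3)$ by mixing the rotation-form estimate with $\rho$-boundedness; for $(2)\Rightarrow(1)$ it picks a return point $x_2$ near $f^n(x_1)$ and runs a chain of triangle inequalities mixing the rotation-form estimate with boundedness of $\int\beta$. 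You instead isolate a single \emph{unconditional} statement --- the uniform bound $\bigl|\int_x^{f_T^n(x)}\mu - n\bigr|\le C$, proved by bracketing the orbit segment between two occurrences $p_i,p_{i+k}$ of one fixed radius-$R$ pattern --- and then both implications collapse to the identity $A_n=\rho\bigl(\int_x^{f^n(x)}\mu-n\bigr)-\rho\,\Delta_n$ plus the orbit-density argument showing ``$\Delta_n$ bounded $\iff h$ bounded,'' with Gottschalk--Hedlund closing the loop. What your route buys is modularity and symmetry: the rotation-form hypothesis is used exactly once, in a lemma of independent interest, and the equivalence itself becomes pure bookkeeping; it also only ever invokes the rotation-form radius, whereas the paper's second direction also drags in the sPE radius of $f$. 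What the paper's route buys is directness: it never needs the primitive $h$, the Lipschitz estimate, or the $C_\phi$-density of forward orbits. One small remark: your appeal to Proposition 7 and unique ergodicity to assert that $\mu$ has average $1/\rho$ (so $\beta$ has zero average) is true but superfluous --- bounded primitive already forces zero average, and the version of Gottschalk--Hedlund the paper cites is stated directly as ``AN $\iff$ bounded integral.'' Likewise your reduction to $\mu>0$ is harmless but never actually used, since your bracketing argument only needs $\mu$ bounded and the displacement positive.
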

\begin{proof}
Suppose that $\mu$ is a positive form, the case of negative
$\mu$ being similar. 
Let $\beta = \mu - \frac{dx}{\rho}$.  By the Gottschalk-Hedlund theorem (see \cite{Kellendonk-Sadun14} for a version of this 
theorem specifically adapted to tiling spaces), $\beta$ is 
asymptotically negligible if and only if its integral is 
bounded. More precisely, $\beta$ is asymptotically negligible 
if and only if the quantity 
\be I(x_1,x_2) :=\int_{x_1}^{x_2} \beta = \left ( \int_{x_1}^{x_2}\mu \right ) - \frac{x_2-x_1}{\rho}\ee
is bounded as a function of $x_1$ and $x_2$. 

First we show that $\rho$-boundedness of $F$ implies 
asymptotic negligibility of $\beta$. Let $R$ be the sPE radius
of $\mu$.
Suppose that $F$ is $\rho$-bounded. Then there exists a constant $C$ such that $|f_T^n(x)-x-n\rho|<C$ for all $n$. 
Let $x_1$ and $x_2$ be two real numbers, sufficiently far apart, and suppose without loss of generality that $x_1<x_2$. Let $x_3$ be such that (a) $T-x_3$ and $T-x_1$ agree on a ball of radius $R$ around the origin, (b) $x_1<x_3<x_2$, and (c) there is no other point between $x_3$ and $x_2$ satisfying properties (a) and (b). That is, $x_3$ is the largest return time, smaller than $x_2$, to the pattern of radius $R$ around $x_1$. By repetitivity and FLC, $|x_3-x_2|$ is uniformly bounded, implying that $I(x_3,x_2)$ is bounded. Since 
$I(x_1,x_2)=I(x_1,x_3)+ I(x_3,x_2)$, all that remains is to
bound $I(x_1,x_3)$.

To see that $I(x_1,x_3)$ is bounded, set $n=\lfloor \int_{x_1}^{x_3}\mu\rfloor$, so in particular 
\be n-1 < \int_{x_1}^{x_3} \mu < n+1. \ee 
Since $\mu$ is a rotation form, we must also have  
\be f^{n-1}(x_1)<x_3<f^{n+1}(x_1). \ee
Combining these estimates, we get 
\begin{eqnarray}
I(x_1,x_3) & = & \left (\int_{x_1}^{x_3} \mu\right ) - \frac{x_3 - x_1}{\rho} \cr 
& \le & n+1 - \frac{x_3-x_1}{\rho} \cr 
& < & 2 - \frac{f^{n-1}(x_1) - x_1 - (n-1)\rho}{\rho} \cr 
& \le & 2 + \frac{C}{\rho}.
\end{eqnarray}
Similarly, 
\begin{eqnarray}
I(x_1,x_3) & = & \left (\int_{x_1}^{x_3} \mu\right ) - \frac{x_3 - x_1}{\rho} \cr 
& \ge & n-1 - \frac{x_3-x_1}{\rho} \cr 
& > & -2 - \frac{f^{n+1}(x_1) - x_1 - (n+1)\rho}{\rho} \cr 
& \ge & -2 - \frac{C}{\rho}.
\end{eqnarray}

Now we show that asymptotic negligibility of $\beta$ implies $\rho$-boundedness of $F$. 
We need to show that there exists a $C>0$
such that
\be \left |f^n(x_1)-x_1-n\rho \right |<C\ee
for every natural number $n$ and every starting point $x_1$. 
For each pair $(x_1,n)$, let $x'=f^n(x_1)$, and let $R$ be the larger of the
PE radius of $f$ and that of the rotation form $\mu$. By repetitivity and finite local complexity, there is a radius $R'$ such that every ball of radius $R'$ contains at least one copy of every $T$-patch of radius $R$. In particular, we can
find a point $x_2$, with $|x_2-x'|<R'$, such that the $T$-patches of radius $R$ around
$x_1$ and $x_2$ agree. 

We will apply the triangle inequality several times.
First, since $|x'-x_2|$ is bounded (say, by $C_1$), we have 
\begin{eqnarray} 
\left |f^n(x_1)-x_1-n\rho \right | & = & \left | x' - x_1 - n\rho \right | \cr 
& \leq & \left |x'-x_2 \right |+|x_2-x_1-n\rho| \cr 
&<& C_1 + |x_2-x_1-n\rho|.\end{eqnarray}
Since $\beta$ is asymptotically negligible, $\int \beta$ is bounded by a constant $C_2$ and  
\begin{eqnarray} 
\left |x_2-x_1-n\rho \right | & \leq & \left |x_2-x_1-\rho\int_{x_1}^{x_2}\mu \right |+\rho \left |\int_{x_1}^{x_2}\mu-n \right | \cr & = &\left | \int_{x_1}^{x_2} \beta \right | +  \rho \left | \int_{x_1}^{x_2} \mu -n \right | \cr 
& < & C_2+\rho\left |\int_{x_1}^{x_2}\mu-n \right |.\end{eqnarray} 
To complete the proof, we need to bound 
$\left |\int_{x_1}^{x_2}\mu-n \right |$. Since $\mu$ is a rotation form for $F$ with radius at most $R$, we have
\be f^{k-1}(x_1)<x_2<f^{k+1}(x_1),\ee
where $k=\lfloor \int_{x_1}^{x_2} \mu \rfloor$.
This implies that $|x_2-f^k(x_1)|$ is bounded by the maximum value of $\phi$ (the displacement of $f$). Furthermore, $|x_2-f^n(x_1)|=|x_2-x'|$ is bounded by $R'$, so $|f^k(x_1)-f^n(x_1)|$ is bounded. Since $\phi$ has a positive minimum value, this in turn bounds $|k-n|$. 
But $k$ is within 1 of $\int_{x_1}^{x_2} \mu$, so we have bounded $\left | \int_{x_1}^{x_2} \mu -n \right |$.

\end{proof}

\section{Flows}\label{sec-flows}

In this section we consider a best-case scenario, when our map $F: \Omega \to \Omega$
is the time-1 sampling of an sPE flow on $\Omega$. In this situation, we explore
the remaining obstructions to $F$ being conjugate to uniform translation on $\Omega$. We also 
show that $F$ coming from a flow implies the existence of a rotation class. Thus the 
{\em non-}existence of a rotation class implies that $F$ does not come from such a flow, and in
particular is not conjugate to uniform translation. 

Let $\bar F: \Omega \times \R \to \Omega$ be a continuous map. We say that $\bar F$
is a {\em flow with sPE velocity} if there exists an sPE function $V: \Omega \to \R$ such
that, for every $(T,t) \in \Omega \times \R$, 
\be \bar F(T, t) = T - x_T(t), \quad x_T(0)=0, \quad \frac{dx_T(t)}{dt} = v_T(x_T(t)) :=  V(T-x_T(t)).\ee

For each $t \in \R$ we define the map $\bar F_t: \Omega \to \Omega$ by 
$\bar F_t(T) = \bar F(T,t)$. 
We say that $F$ is the {\em time-1 sampling} of $\bar F$ if $F= \bar F_1$.
To avoid trivialities, we assume that $F$ has no fixed points, or equivalently that $V$ has no zeros. 
As always, we use lower-case letters with subscript $T$ to 
denote maps on $\R$ (or $\R \times \R$) associated with maps on $\Omega$ (or $\Omega \times \R$) via a reference tiling $T$.
When the reference tiling $T$ is clear we drop the subscript $T$. In particular, 
$\bar f_t(x_0)$ is the solution to the differential equation
\be \frac{dx}{dt} = v(x); \qquad x(0)=x_0.\ee

\begin{prop}\label{mu-exists} Let $F \in \F$.
Let $T$ be a fixed reference tiling, and identify the orbit of $T$ with $\R$ via $T-x \leftrightarrow x$.
If the map $F$ is the time-1 sampling of an sPE flow $\bar F$ with velocity function $V$, then 
$\mu = dx/v(x)$ is a rotation cochain whose PE radius is the same as the PE radius of $v$. 
\end{prop}

\begin{proof} Suppose that $x_1$ and $x_2$ are points such that $T-x_1$ and $T-x_2$ agree on 
$B_R(0)$, where $R$ is the PE radius of $v$. Let $s=\int_{x_1}^{x_2} \frac{dx}{v(x)}$. Then 
$\bar f_s(x_1)=x_2$. Since the flow is unidirectional, if $s$ lies between two integers $n_-$ and $n_+$, then $x_2$ lies
between $\bar f_{n_-}(x_1) = f^{n_-}(x_1)$ and $\bar f_{n_+}(x_1) = f^{n_+}(x_1)$. 
\end{proof}

Let $\mu = dx/v(x)$, and let $\Omega_\mu$ be the tiling space obtained by applying the shape change
associated with $\mu$ to $\Omega$. The shape-change homeomorphism $S: \Omega \to \Omega_\mu$ conjugates $\bar F_t$ on 
$\Omega$ to $\Gamma_t$ (i.e., translation by $t$) on $\Omega_\mu$, and in particular conjugates $F = \bar F_1$ to 
$\Gamma_1$ on $\Omega_\mu$.
However, this does {\em not} imply that $F$ conjugates to a uniform translation on $\Omega$ itself! 
That depends on the cohomology class of $\mu$:

\begin{theorem} \label{GoodFlow} Let $\bar F: \Omega \times \R \to \Omega$ be a flow with never-zero sPE velocity function
$V: \Omega \to \R$, let $T$ be a reference tiling, and let $\mu = dx/v(x)$. Let $\rho$ be a nonzero real number. The following 
four conditions are then equivalent. 
\begin{enumerate}
    \item $\mu = \frac{dx}{\rho} + \beta$, where $\beta$ is asymptotically negligible.
    \item $F$ is $\rho$-bounded. 
    \item There is a homeomorphism $H: \Omega \to \Omega$, homotopic to the identity, 
    that conjugates $\bar F_t$ to $\Gamma_{\rho t}$ for every $t \in \R$.
    \item There is a homeomorphism $H: \Omega \to \Omega$, homotopic to the identity, that conjugates $F$ to $\Gamma_\rho$. 
\end{enumerate}
\end{theorem}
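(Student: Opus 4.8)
The plan is to prove the cycle of implications $(1) \Rightarrow (3) \Rightarrow (4) \Rightarrow (2) \Rightarrow (1)$. Two of these are essentially free. The implication $(3) \Rightarrow (4)$ is immediate: setting $t=1$ in a conjugacy between $\bar F_t$ and $\Gamma_{\rho t}$ conjugates $F=\bar F_1$ to $\Gamma_\rho$. The implication $(2) \Rightarrow (1)$ is Theorem \ref{theorem9}: since $V$ is never zero, $F$ has no fixed points; $\mu=dx/v(x)$ is a rotation form by Proposition \ref{mu-exists}; and $\rho$-boundedness forces $f_T^n(x)/n \to \rho$, so $\rho$ is the rotation number and Theorem \ref{theorem9} identifies $\rho$-boundedness with asymptotic negligibility of $\mu-dx/\rho$. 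So the genuine content lies in $(1)\Rightarrow(3)$ and $(4)\Rightarrow(2)$.

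For $(1)\Rightarrow(3)$, assume without loss of generality that $v>0$, so $\mu=m\,dx$ with $m>0$; since $\int_0^x\mu\to+\infty$ while $\int_0^x\mu = x/\rho+O(1)$, condition $(1)$ forces $\rho>0$ (the case $v<0$ is symmetric). Write $\mu=dx/\rho+\beta$ with $\beta$ asymptotically negligible. By the Gottschalk--Hedlund argument already used in Theorem \ref{theorem9}, $\int_0^x\beta$ is bounded, and by definition $\beta=dg$ for some wPE function $g$. I would then define the lift $h_T(x)=\rho\int_0^x\mu = x+\rho\int_0^x\beta$ and set $H(T-x)=T-h_T(x)$ on the orbit of $T$. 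Its displacement $\rho\int_0^x\beta = \rho(g(x)-g(0))$ is bounded and wPE, so $H$ extends continuously to all of $\Omega$, with $H(T')=T'-\Psi(T')$ for a continuous $\Psi:\Omega\to\R$. The conjugacy is built into the construction: since the flow satisfies $\int_x^{\bar f_t(x)}\mu=t$, one gets $h_T(\bar f_t(x))=h_T(x)+\rho t$ on the orbit, whence $H\circ\bar F_t=\Gamma_{\rho t}\circ H$ on $\Omega$ by continuity.

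The main obstacle is checking that $H$ is a homeomorphism homotopic to the identity. Surjectivity is easy: $H(\Omega)$ is compact, hence closed, and contains the dense orbit of $T$ because $h_T$ is a bijection of $\R$. For injectivity, $H$ maps each orbit to itself, and the positivity of $\mu$ as a \emph{global} sPE $1$-form makes the lift of $H$ through \emph{every} reference tiling strictly increasing; hence $H$ is injective on each orbit, and therefore injective on $\Omega$. A continuous bijection of the compact Hausdorff space $\Omega$ is a homeomorphism. Finally, the linear homotopy with lift $h_T^{(s)}(x)=(1-s)x+s\rho\int_0^x\mu$ has derivative $(1-s)+s\rho m(x)>0$ for all $s\in[0,1]$, so each $H_s(T')=T'-s\Psi(T')$ is a homeomorphism by the same argument, giving a homotopy from the identity to $H$.

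For $(4)\Rightarrow(2)$, a homeomorphism $H$ homotopic to the identity preserves the orbit of $T$, so $H(T')=T'-\eta(T')$ for a continuous, hence bounded, function $\eta:\Omega\to\R$ (see \cite{kwapisz2010topological}); its lift is $h_T(x)=x+\eta(T-x)$. The conjugacy $H\circ F=\Gamma_\rho\circ H$ reads $h_T(f_T(x))=h_T(x)+\rho$, so $h_T(f_T^n(x))=h_T(x)+n\rho$, and subtracting gives $f_T^n(x)-x-n\rho=\eta(T-x)-\eta(T-f_T^n(x))$, which is bounded by $2\sup_\Omega|\eta|$ uniformly in $n$, $x$ and $T$. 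Thus $F$ is $\rho$-bounded, closing the cycle. The crux throughout is the construction in $(1)\Rightarrow(3)$: asymptotic negligibility of $\beta$ is exactly what guarantees that $\int_0^x\beta$ is bounded and wPE, which is what lets $H$ extend to $\Omega$ and stay within bounded distance of the identity --- the feature distinguishing this genuine self-conjugacy of $\Omega$ from the weaker Clark--Sadun conjugacy between the shape-change spaces $\Omega_\mu$ and $\Omega_{dx/\rho}$.
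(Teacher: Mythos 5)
Your proof is correct, but it takes a genuinely different route from the paper's. The paper proves $(1)\Leftrightarrow(2)$ by Theorem \ref{theorem9}, proves $(1)\Leftrightarrow(3)$ by passing through the shape-change map $H_0:\Omega\to\Omega_{\rho\mu}$ and invoking the rigidity theory of \cite{Julien17, Julien-Sadun} (condition (3) holds iff $\Omega$ and $\Omega_{\rho\mu}$ are conjugate via a map homotopic to $H_0$, which holds iff $[\rho\mu-dx]=[\rho\beta]$ is asymptotically negligible), and proves $(4)\Rightarrow(3)$ via Lemma \ref{GoodEnough}, a nontrivial closed-subgroup argument that upgrades a conjugacy of $F=\bar F_1$ with $\Gamma_1$ to a conjugacy of the flows. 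Your cycle $(1)\Rightarrow(3)\Rightarrow(4)\Rightarrow(2)\Rightarrow(1)$ sidesteps both pieces of machinery: for $(1)\Rightarrow(3)$ you need only the ``soft'' direction of the deformation theory, which you prove by hand --- asymptotic negligibility gives a bounded wPE primitive of $\beta$, so $h_T(x)=\rho\int_0^x\mu$ has bounded wPE displacement, extends continuously to $\Omega$, and is an orbit-preserving homeomorphism homotopic to the identity conjugating $\bar F_t$ to $\Gamma_{\rho t}$ --- while $(4)\Rightarrow(2)$ is the elementary telescoping identity $f_T^n(x)-x-n\rho=\eta(T-x)-\eta(T-f_T^n(x))$, bounded by $2\sup|\eta|$. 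What the paper's route buys is reusability and symmetry: Lemma \ref{GoodEnough} and the Julien--Sadun correspondence are needed again in Section \ref{sec-Denjoy} (Lemma \ref{JS18-lemma}), and the paper gets both directions of $(1)\Leftrightarrow(3)$ directly, whereas you recover the ``only if'' direction only by going around the cycle. What your route buys is an explicit formula for the conjugacy and independence from the deeper homeomorphism-classification results. Two small points you should still spell out: in $(2)\Rightarrow(1)$, Theorem \ref{theorem9} is stated for \emph{the} rotation number, so one must first observe (as you do) that $\rho$-boundedness forces $\lim_n f_T^n(x)/n=\rho$; and in your injectivity argument, the assertion that the lift of $H$ through an \emph{arbitrary} $T'\in\Omega$ is strictly increasing requires the limit computation $h_{T'}(x)-h_{T'}(y)=\rho\int_y^x\mu_{T'}>0$, which uses sPE-ness of $\mu$ (so that integrals over translates of $T$ converge to integrals over $T'$), not just positivity of $\mu$ on the original orbit.
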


Note that we have already proven the equivalence of conditions (1) and (2) in Theorem \ref{theorem9}.
Before proving the rest of Theorem \ref{GoodFlow}, we state and prove a lemma relating conditions (3) and (4). 

\begin{lemma} \label{GoodEnough} Suppose $\Omega$ is the orbit closure of a 1-dimensional repetitive tiling
with FLC, that $\bar F$ is a flow on $\Omega$, generated by an 
sPE velocity function $V$, that $F=\bar F_1$, and that
$G: \Omega \to \Omega'$ is a homeomorphism that conjugates $F$ to $\Gamma_1$. 
Then there exists
a (possibly different) 
homeomorphism $\tilde G: \Omega \to \Omega'$ that conjugates $\bar F_t$ to
$\Gamma_t$ for all $t \in \R$.
\end{lemma}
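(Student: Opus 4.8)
The plan is to repair the given time-one conjugacy $G$ into a full conjugacy by averaging it along the flow. Since $\bar F_t$ moves each tiling within its own translation orbit, and since a homeomorphism carries path components to path components — these being precisely the translation orbits — $G$ maps each orbit of $\Omega$ homeomorphically onto an orbit of $\Omega'$. Fix $W \in \Omega$, identify the orbit of $G(W)$ with $\R$ via $G(W)-y \leftrightarrow y$, and let $j_W(t)\in\R$ be the continuous lift with $j_W(0)=0$ determined by $G(\bar F_t(W)) = \Gamma_{j_W(t)}(G(W))$. I would then set
\[
 c(W) = \int_0^1 \bigl(j_W(t) - t\bigr)\,dt, \qquad \tilde{G}(W) = \Gamma_{c(W)}\bigl(G(W)\bigr),
\]
so that $\tilde{G}$ slides $G(W)$ along its orbit by the mean ``lag'' of $G$ accumulated over one flow period.

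The conceptual crux is a single periodicity identity. Because $\bar F_{t+1} = \bar F_1 \circ \bar F_t = F \circ \bar F_t$ and $G \circ F = \Gamma_1 \circ G$, one obtains $G(\bar F_{t+1}(W)) = \Gamma_1 G(\bar F_t(W))$, hence $j_W(t+1) = j_W(t) + 1$. From the flow law $\bar F_t \circ \bar F_s = \bar F_{t+s}$ one reads off the cocycle relation $j_{\bar F_s W}(t) = j_W(t+s) - j_W(s)$. Substituting these into the definitions, the desired intertwining $\tilde{G} \circ \bar F_s = \Gamma_s \circ \tilde{G}$ reduces to the elementary identity $\int_s^{s+1} j_W(u)\,du = \int_0^1 j_W(u)\,du + s$, which holds because $j_W(s+1)-j_W(s)=1$ forces $s \mapsto \int_s^{s+1} j_W$ to have constant derivative $1$. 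This shows $\tilde{G}$ conjugates $\bar F_s$ to $\Gamma_s$ for every $s \in \R$.

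It remains to see that $\tilde{G}$ is a homeomorphism. Continuity is the one genuinely technical point: I must show $W \mapsto c(W)$ is continuous, which amounts to checking that the lift $(W,t) \mapsto j_W(t)$ can be chosen jointly continuous over the full period $t \in [0,1]$ (locally this follows from continuity of $G$ and of the flow together with path-lifting of the orbit map, and the integral then inherits continuity). Granting this, $\tilde{G}$ is continuous on the compact space $\Omega$; it respects the orbit partition and induces the same bijection of orbit space as $G$, while $\Gamma$-equivariance makes its restriction to each orbit a translation-equivariant, hence bijective, map $\R \to \R$. Therefore $\tilde{G}$ is a continuous bijection from a compact space to a Hausdorff space, and so a homeomorphism.

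I expect the joint continuity of the averaged displacement $c$ to be the only real obstacle; the algebra of the conjugacy and the concluding compactness argument are routine. A cleaner bookkeeping alternative, which separates this averaging from the reparametrization, is to first pass through the shape change $S : \Omega \to \Omega_\mu$ of the preceding discussion, which already conjugates $\bar F_t$ to $\Gamma_t$. This reduces matters to the case where \emph{both} flows are uniform translations and $K := G \circ S^{-1}$ conjugates $\Gamma_1$ to $\Gamma_1$; the averaging argument above then applies verbatim with $\bar F_t$ replaced by $\Gamma_t$, and $\tilde{G} := \tilde{K} \circ S$ yields the required map.
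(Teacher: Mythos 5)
Your proposal is correct, but it takes a genuinely different route from the paper's proof. The paper argues by a case analysis on the set $S$ of flow-times $t$ for which $\bar F_t(T)$ lies in the closure of the $F$-orbit of a reference tiling $T$: it shows $S$ is a closed subgroup of $\R$ containing $\Z$, so either $S=\R$, in which case the displacement $\chi(t)$ of $G$ along the flow is shown to be additive, hence the identity on $\Q$ and then on $\R$, so the \emph{original} $G$ already conjugates $\bar F_t$ to $\Gamma_t$; or $S=\tfrac1q\Z$, in which case $\tilde G$ is redefined on the flow-time interval $(0,1/q)$ (legitimate because distinct points there lie in disjoint orbit closures) and extended by continuity. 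You instead build $\tilde G$ in one stroke by averaging the lag cocycle $j_W(t)-t$ over a period; your cocycle identity $j_{\bar F_s W}(t)=j_W(t+s)-j_W(s)$, the periodicity $j_W(t+1)=j_W(t)+1$, and the reduction of the intertwining to the fact that $s\mapsto \int_s^{s+1} j_W$ has derivative $1$ all check out, and the concluding compactness argument (continuous bijection from a compact space to a Hausdorff space) is sound. What your approach buys is uniformity and explicitness: no case split, no appeal to density or minimality of $F$-orbits, and a formula that transparently generalizes the classical averaging trick of rotation theory. What it costs is exactly the point you flag: joint continuity of the lift $(W,t)\mapsto j_W(t)$. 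That gap is real but fillable with facts the paper itself uses — each map $G\circ\bar F_t\circ G^{-1}$ is a self-homeomorphism of $\Omega'$ homotopic to the identity (through the flow), so by the cited result of Kwapisz it has a continuous displacement, and uniqueness of displacements (aperiodicity) plus compactness upgrades this to joint continuity; note that the paper's own proof likewise asserts that ``the relative spacing of these two tilings is a continuous function of $T'$.'' The one piece of information the paper's route yields that yours does not is structural: when the $F$-orbit closure of a reference tiling is all of $\Omega$, no correction is needed at all, i.e.\ $G$ itself is already a flow conjugacy. Your final reduction through the shape change $S:\Omega\to\Omega_\mu$ is a clean way to organize the bookkeeping and is fully consistent with the paper's framework.
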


\begin{proof}[Proof of Lemma \ref{GoodEnough}] 
Pick a reference tiling $T$, and let $X$ be the closure of the 
$F$-orbit of $T$. We will use the notation $T_t$, $T_s$, etc. as shorthand for $\bar F_t(T)$, 
$\bar F_s(T)$, etc. 
Let $S$ be the set of all $t$'s such that 
$T_t \in X$. We claim that $S$ is a 
closed subgroup of $\R$ that contains the integers. 

Closure comes from the fact that if there is a sequence of $t_i\in S$ 
converging to $t_\infty$, then $T_{t_\infty}$ is arbitrarily
well-approximated by the $T_{t_i}$'s, each of which can be arbitrarily
well-approximated by some $T_{n_i}=F^{n_i}(T)$, where $n_i\in\Z$, 
so $T_{n_\infty} \in X$ and $t_\infty\in S$. 

To see the group property, suppose that $s$ and $t$ are in $S$ and pick an 
$\epsilon>0$. We must find an integer $N$ such that $d(T_N,T_{s+t}) < \epsilon$, where $d$ is the tiling metric. 

The map $\bar F_t$ is uniformly continuous, so there is a $\delta_1$
such that if two tilings are within $\delta_1$, then their images under $\bar F_t$ are
within $\epsilon/2$. Since $s \in S$, we can find an integer $n$ such that $d(T_n,T_s)< \delta_1$. Since $F^n$ is uniformly continuous, there is a $\delta_2$
such that if two tilings are within $\delta_2$, then their images under $F^n$ are within
$\epsilon/2$. 
Since $t \in S$, there exist an integer $m$ such that $d(T_m, T_t)<\delta_2$.

We then have 
\begin{eqnarray}
d(T_{n+m}, F^n(T_t)) < \epsilon/2 & \hbox{since} & d(T_m, T_t)< \delta_2, \cr 
F^n(T_t) = & T_{n+t} =  & \bar F_t(T_n),  \cr 
d(\bar F_t(T_n), T_{s+t}) < \epsilon/2 & \hbox{since} & d(T_n, T_s) < \delta_1, \cr 
d(T_{n+m}, T_{s+t})<\epsilon & \hbox{by the} & \hbox{triangle inequality.}
\end{eqnarray}
Thus $s+t \in S$. Since $S$ is a closed subgroup of $\R$ containing $\Z$, either
$S = \R$ or $S$ is a cyclic group generated by a fraction $1/q$. We will treat each
of these cases in turn. 

First suppose that $S=\R$. 
For each $t \in \R$, let $\chi(t)$ be such that $G(T_t)=G(T)-\chi(t)$. 

Since $F=\bar F_1$ commutes with $\bar F_t$ and conjugates to $\Gamma_1$, we
must have $\chi(t+1)=\chi(t)+1$. We will show that for all $s,t \in \R$, 
$\chi(s+t)=\chi(s)+\chi(t)$. This, combined with $\chi$ being the identity
on $\Z$, implies that $\chi$ is the identity on $\Q$. By continuity, $\chi$
is then the identity on all of $\R$, and $G$ conjugates $\bar F_t$ to 
$\Gamma_t$ on the orbit of $T$. By continuity, $G$ then conjugates $\bar F_t$ to $\Gamma_t$ on all
of $\Omega$.

Since $S=\R$, we can find an integer $n$ such that 
$T_n$ approximates $T_t$
arbitrarily well. Since $\bar F_s$ is uniformly continuous, this implies that 
$T_{t+s}$ is approximated arbitrarily well by $T_{n+s}$. Now, for any tiling
$T'$, $G(T'_s)$ is a translate of $G(T')$, and the relative spacing of these two tilings
is a continuous function of $T'$. Thus the displacement between $G(T_{s+t})$ and 
$G(T_t)$, which is $\chi(t+s)-\chi(t)$, can be arbitrarily well approximated by 
the relative displacement of $G(T_n)$ and $G(T_{n+s})$, namely $\chi(n+s)-\chi(n)$. 

However, $G$ conjugates $F^n$ to translation by $n$, so $\chi(n)=n$ and $\chi(n+s)=n+\chi(s)$.
Thus $\chi(s+t)-\chi(t)$ is arbitrarily well approximated by $\chi(s)$, and must 
therefore equal $\chi(s)$. This completes the proof in the case that $S=\R$.

Finally, suppose that $S=\frac{1}{q}\Z$. The previous arguments show
that $\chi$ is the identity on $\frac{1}{q}\Z$, but say nothing about
how $\chi$ behaves on the interval $(0,1/q)$. However, no two points on this
interval are in the same orbit closure, so we are free to pick {\em any} 
orientation-preserving homeomorphism $\chi: [0, 1/q] \to [0,1/q]$. 
In particular, we can pick the identity, resulting in 
$\tilde G(T-t) = G(T)-t$ for all $t$. We then extend $\tilde G$ to all of $\Omega$
by continuity, and $\tilde G$ conjugates $\bar F_t$ to $\Gamma_t$.
\end{proof}

\begin{proof} [Proof of Theorem \ref{GoodFlow}]
The first two conditions are equivalent by Theorem \ref{theorem9}.
Next we show that the first and third conditions are equivalent. 
We apply a shape change by $\mu$ to $\Omega$ to get a new tiling space 
$\Omega_\mu$, and then apply a uniform dilation by $\rho$ to $\Omega_\mu$ to
get a further space $\Omega_{\rho \mu}$. Let $H_0: \Omega \to \Omega_{\rho \mu}$ be the resulting shape change map. 
These shape changes conjugate 
$\bar F_t$ on $\Omega$ to $\Gamma_t$ on $\Omega_\mu$ to $\Gamma_{\rho t}$ on 
$\Omega_{\rho \mu}$. In particular, $H_0$ conjugates $F$ on $\Omega$ to 
$\Gamma_\rho$ on $\Omega_{\rho \mu}$.

However, what we want is to find a homeomorphism $H: \Omega \to \Omega$, homotopic to the identity, that conjugates
$\bar F_t$ to $\Gamma_{\rho t}$. If this exists, then $H_0 \circ H^{-1}: \Omega \to \Omega_{\rho \mu}$ is a topological
conjugacy. Conversely, if there exists a topological conjugacy $G: \Omega \to \Omega_{\rho \mu}$, homotopic to $H_0$,
then we can take $H = G^{-1} \circ H_0$. Thus 
condition (3) is equivalent to the existence of a topological conjugacy $G$ between $\Omega$ and $\Omega_{\rho \mu}$ that
is homotopic to $H_0$.

The (non)existence of such a conjugacy is addressed with the results of \cite{Julien17, Julien-Sadun}. 
Each homeomorphism
$G: \Omega \to \Omega'$ of tiling spaces is associated to a class $[G] \in
\check H^1(\Omega, \R^d)$ (where $d$ is the dimension of the tiling space, in our
case $d=1$). Homotopic maps yield the same class, and the class of a shape change is represented by the form generating
the shape change, so if $\Omega'=\Omega_{\rho \mu}$ and $g$ is homotopic to 
$h_0$, then 
\be [G] = [H_0] = \rho \mu = dx + \rho \beta. \ee
By a theorem of \cite{Julien17}, 
the spaces $\Omega$ and $\Omega'$ are topologically conjugate,
via a map homotopic to $G$, if and only if $[G - dx]$ is asymptotically negligible.
Thus condition (3) is equivalent to the asymptotic negligibility of $\rho \beta$, which is
of course equivalent to the asymptotic negligibility of $\beta$.

Thus the first three conditions are equivalent. By specializing to $t \in \Z$, the 
third condition implies the fourth. Lemma \ref{GoodEnough}, applied to a uniform rescaling of $\Omega$ by $\rho^{-1}$, 
shows that the fourth condition implies the third. 
\end{proof}

\section{Existence, uniqueness, and Poincare's theorem.}\label{sec-Poincare}

    \subsection{Existence of $\mu$ and semi-conjugacy to a flow.}  
    
Proposition \ref{mu-exists} says that a map $F \in \F(\Omega)$ that comes from a flow with sPE velocity has an sPE rotation form $\mu$, and a corresponding rotation class $[\mu]$. But what
if $F$ is merely conjugate, or semi-conjugate, to a map $F'$ with sPE displacement on another space $\Omega'$
that comes from a flow with sPE velocity? Does that imply that $F$ itself comes from a flow with sPE velocity? If not,
does it at least imply that $F$ admits a rotation class?

These questions are surprisingly hard. The difficulty is that a general (semi-)conjugacy does not have
to respect the local product structures. If $H: \Omega \to \Omega'$ (semi-)conjugates a map
$F \in \F(\Omega)$ to a map $F' \in \F(\Omega')$, and if $\mu'$ is a rotation form for 
$F'$, there is no guarantee that $H^*\mu'$ is sPE. To make progress on these questions, 
we must restrict our attention to the subset of (semi-)conjugacies for which $H^* \mu'$ is 
in fact sPE.

Suppose that $F \in \F(\Omega)$, that $H: \Omega \to \Omega'$ is a map of tiling spaces,
and that $F'\in F(\Omega')$. If $F' \circ H = H \circ F$, then we say that $H$ is a {\em 
semi-conjugacy} of $F$ to $F'$. If in addition $H$ is a homeomorphism, we say that $H$
is a {\em conjugacy} of $F$ to $F'$. As usual, we identify the orbits of $T \in \Omega$ and 
$T'=H(T) \in \Omega'$ with $\R$, resulting in maps $f$ and $f'$ on $\R$, a map $h$ from
the copy of $\R$ representing the orbit of $T$ to the copy of $\R$ representing the orbit 
of $T'$, and 
a flow $\bar f': \R\times \R \to \R$. If $f'$ comes from an flow $\bar f'$ with sPE velocity function $v'$, and if the pullback by $h$ of the rotation 
form $\mu'=dx/v'(x)$ is sPE, then we say that $H$ is a {\em local} (semi-)conjugacy of $F$
to the time-1 sampling by a flow, and we say that $F$ is {\em locally (semi-)conjugate} to
$F'$. (The word ``local'' here refers to the fact that local semi-conjugacies
typically preserve the local product structure of the tiling space $\Omega$.)
    
These definitions may sound restrictive, but in fact we will show 
(Corollary \ref{irr implies local semiconj}) that whenever $F$ admits an {\em irrational} rotation class,
it is locally semi-conjugate to uniform motion. 

If $H$ is a local conjugacy, then we can pull $\bar F'$ back
to get a flow $\bar F$ on $\Omega$, such that $F$ is the time-1 sampling of $\bar F$. 
Since $\mu = H^*(dx/v') = dx/v$ is sPE, the function $v(x)$ is sPE, so 
$\mu = dx/v$ is a rotation cochain for $F$ by Proposition \ref{mu-exists}. The
following theorem shows that $F$ admits a rotation cochain
even when $H$ is only a local semi-conjugacy. 

\begin{theorem}\label{mu-exists-semi}Let $F \in \F(\Omega)$. Suppose that $F$ is locally semi-conjugate, via 
a map $H: \Omega \to \Omega'$, to the time-1 sampling of an sPE flow $\bar F'$ on $\Omega'$.
Pick reference tilings $T \in \Omega$ and $T'=H(T) \in \Omega'$, and identify the
orbits of $T$ and $T'$ with $\R$ as usual. Let $\mu'=dx/v'(x)$ and let $\mu=h^*(\mu')$. 
Then $\mu$ is an sPE rotation cochain for $F$ whose radius (as a rotation cochain) 
is the same as its PE radius. 
\end{theorem}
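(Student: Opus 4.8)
The plan is to verify the defining inequalities of a rotation cochain for $\mu$ directly, transporting the flow structure on $\Omega'$ back through the semi-conjugacy in close analogy with the proof of Proposition~\ref{mu-exists}. First I record the three relations the hypotheses supply. Writing $h$ for the map induced on the orbit lines by $H$ (so that $H(T-x)=T'-h(x)$), the intertwining $F'\circ H=H\circ F$ becomes $h\circ f=f'\circ h$, and since $F'=\bar F'_1$ we have $f'=\bar f'_1$. Because $\mu=h^*(\mu')$, the fundamental relation of the pullback gives, for all $x_1,x_2$,
\be \int_{x_1}^{x_2}\mu=\int_{h(x_1)}^{h(x_2)}\mu'. \ee
Finally, $\mu'=dx/v'(x)$ is exactly the infinitesimal flow-time for $\bar f'$, so $\int_{a}^{b}\mu'$ is the time the flow $\bar f'$ needs to travel from $a$ to $b$.

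The heart of the argument is a single computation that replaces ``$\mu'$ measures flow-time'' by ``$\mu$ counts iterations of $f$.'' Set $\Psi(x):=\int_0^x\mu$. Using the pullback relation, the identity $h\circ f=\bar f'_1\circ h$, and the flow-time interpretation of $\mu'$,
\be \Psi(f(x))-\Psi(x)=\int_{x}^{f(x)}\mu=\int_{h(x)}^{h(f(x))}\mu'=\int_{h(x)}^{\bar f'_1(h(x))}\mu'=1, \ee
the last step holding because $\bar f'_1(h(x))$ is reached from $h(x)$ in flow-time exactly $1$. Hence $\Psi\circ f=\Psi+1$ everywhere, and by iteration $\Psi(f^n(x))=\Psi(x)+n$ for every integer $n$ and every $x$. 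Note this holds for all points, with no hypothesis on patterns.

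With this in hand the rotation-cochain inequalities follow provided $\Psi$ is monotone. If $n_-<\int_{x_1}^{x_2}\mu<n_+$, then since $\int_{x_1}^{x_2}\mu=\Psi(x_2)-\Psi(x_1)$ and $\Psi(f^{n_\pm}(x_1))=\Psi(x_1)+n_\pm$, this reads $\Psi(f^{n_-}(x_1))<\Psi(x_2)<\Psi(f^{n_+}(x_1))$. When $\Psi$ is nondecreasing, $\Psi(x_2)>\Psi(f^{n_-}(x_1))$ forces $x_2>f^{n_-}(x_1)$ and $\Psi(x_2)<\Psi(f^{n_+}(x_1))$ forces $x_2<f^{n_+}(x_1)$ (here $f$ is the forward orientation-preserving map, so $f^{n_-}(x_1)<f^{n_+}(x_1)$), i.e. $x_2$ lies strictly between $f^{n_-}(x_1)$ and $f^{n_+}(x_1)$. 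These inequalities in fact hold for all $x_1,x_2$ regardless of pattern agreement, so $\mu$ is a rotation cochain and its rotation-cochain radius may be taken to equal its PE radius, as the statement asserts.

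The one genuinely nontrivial point, and the main obstacle, is the monotonicity of $\Psi$, equivalently the nonnegativity of $\mu=h^*\mu'$, equivalently the fact that $h$ is order-preserving. In the conjugacy case this is free, since there $h$ is a homeomorphism and $dx/v$ is a never-zero form; the difficulty special to a semi-conjugacy is that $h$ is only continuous and need not be invertible, so $h'$ could a priori change sign, whence $\mu$ would change sign and $\Psi$ would fail to be monotone. My plan is to exclude this using the unidirectionality of $\bar F'$ together with the compactness and minimality of $\Omega$. The identity $\Psi\circ f=\Psi+1$ with $f$ forward already shows $\mu$ has positive average and cannot be globally negative, so only a local reversal of $h$ must be ruled out; for this I would use the continuity of $H$ on all of $\Omega$ (not merely on the orbit of $T$) to propagate the forward orientation of the flow $\bar f'$, through $h$, to the orientation of $f$. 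Under the standing convention that rotation forms and flow velocities are positively oriented, $\mu=h^*\mu'$ is a nonnegative sPE form and $\Psi$ is nondecreasing, completing the proof.
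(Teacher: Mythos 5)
Your core computation is correct, and it is essentially the paper's own proof in different notation. Writing $\Theta(y)=\int_{h(0)}^{y}\mu'$ for the flow-time coordinate on the orbit of $T'$, your function $\Psi$ is exactly $\Theta\circ h$, and your identity $\Psi\circ f=\Psi+1$ is the paper's observation that $h$ intertwines $f$ with the time-one map of the flow, which advances flow-time by exactly $1$. The paper then pulls the resulting inequalities back through $h$, using that $h(x)<h(y)$ implies $x<y$; you pull them back through $\Psi$, using that $\Psi$ is nondecreasing. Since $\Theta$ is a strictly increasing diffeomorphism of $\R$, these are literally the same step: $\Psi$ is nondecreasing if and only if $h$ is. (Both your argument and the paper's actually establish the inequalities for \emph{all} pairs $x_1,x_2$, so the claim about the radius comes out the same way in both.)

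The gap is at precisely the point you yourself flag as ``the one genuinely nontrivial point'': monotonicity is never proved. Your final paragraph is a plan, not an argument, and its concluding sentence is circular --- the ``standing convention'' orients $v'$ and $\mu'$, but says nothing about the sign of $h'$; nonnegativity of $h^*\mu'$ is \emph{equivalent} to the monotonicity of $h$ you are trying to establish, so it cannot be invoked as a convention. Moreover, the mechanism you propose (continuity of $H$ on all of $\Omega$, compactness, minimality, unidirectionality of $\bar F'$) is genuinely insufficient: on a tiling space all of whose tiles have length $1$, let $\theta(S)\in[0,1)$ be the common position of the vertices of $S$ modulo $1$ and set $H(S)=S-\tfrac12\sin\bigl(2\pi\theta(S)\bigr)$. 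This $H$ is continuous on all of $\Omega$, satisfies $H\circ\Gamma_1=\Gamma_1\circ H$, and pulls $dx$ back to an sPE form, so it is a local semi-conjugacy of $F=\Gamma_1$ to the time-one map of the unit-speed flow; yet the induced map $h(x)=x-\tfrac12\sin(2\pi x)$ is not monotone and $\mu=h^*(dx)$ changes sign. (The conclusion of the theorem still holds in this example, but for reasons your argument does not capture.) The paper does not derive monotonicity from these ingredients either; it \emph{uses}, as a standing property of a semi-conjugacy restricted to orbits, that $x<y$ implies $h(x)\le h(y)$ --- i.e., that $h$ is a monotone degree-one map, as in classical rotation theory. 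Had you invoked that same property of $h$, your proof would be complete and essentially identical to the paper's; as written, the crucial step is missing and the proposed route to it cannot be made to work.
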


\begin{proof} Without loss of generality, supposed that
$\bar F$ is a forwards flow, so that $t_1 < t_2$ implies that 
$\bar f_{t_1}(y) < \bar f_{t_2}(y)$ for every $y \in \R$.

Let $R$ be the PE radius of $\mu$, and suppose that  
$T-x_1$ and $T-x_2$ agree out to radius $R$.
Then $\int_{h(x_1)}^{h(x_2)} \mu' $ is the travel time from $h(x_2)$ to $h(x_1)$ under the flow $\bar f'$. 
Suppose that $n_-, n_+ \in \mathbb{Z}$ and that 
$n_- < \int_{x_1}^{x_2} \mu < n_+$. Then
\begin{equation} n_- < \int_{h(x_1)}^{h(x_2)} \mu' < n_+.
\end{equation}
Thus, for any $y$,
\begin{equation}  \bar f_{n_-} (y) < \bar f_{\int_{h(x_1)}^{h(x_2)} \mu' } (y)  < \bar f_{n_+} (y),
\end{equation}
so
\begin{equation} 
\bar f_{n_-} (h(x_1))  < \bar f_{\int_{h(x_1)}^{h(x_2)} \mu' } (h(x_1))  < \bar f_{n_+} (h(x_1)), \end{equation} 
\begin{equation} \bar f_{n_-} (h(x_1))  <  h(x_2)  < \bar f_{n_+} (h(x_1)). \end{equation}
Since  $h \circ f = \bar f_1 \circ h$,
\begin{equation} h \circ f^{(n_-)}(x_1)
  <  h(x_2)  <  h \circ f^{(n_+)}(x_1). \end{equation} 
Note that although $x < y$ only implies $h(x) \leq h(y)$, 
it is still the case that $h(x) < h(y)$ implies $x < y$. Therefore
\begin{equation} f^{(n_-)}(x_1)
  <  x_2  <   f^{(n_+)}(x_1).
  \end{equation}
Since $x_1$ and $x_2$ were arbitrary points whose neighborhoods
agreed to distance $R$, $\mu$ is a rotation cochain with 
radius $R$. 
\end{proof}

\subsection{Uniqueness of the rotation class}

\begin{theorem}\label{thm-unique} Suppose that the map $F:\Omega \to \Omega$ has rotation forms
$\mu$ and $\nu$, and that $[\mu]$ is irrational. 
Then $\mu$ and $\nu$ are cohomologous. That is, the rotation {\em class} is unique.
\end{theorem}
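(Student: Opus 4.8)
The plan is to first establish a uniform bound on the integrals of $\beta := \mu - \nu$ between matching points, and then to use the definition of irrationality — minimality of $\Gamma_1$ on $\Omega_\mu$ — to upgrade that bound from ``bounded'' to ``identically zero.'' Let $R$ be a common rotation-form radius for $\mu$ and $\nu$ (the larger of the two), and, after collaring, assume the tiles record the pattern out to radius $R$. Fix an $R$-pattern $P$ and let $r_0 < r_1 < \cdots$ enumerate its occurrences in the reference tiling $T$. For $r_i < r_j$, set $s = \int_{r_i}^{r_j}\mu$ and $t = \int_{r_i}^{r_j}\nu$. Both forms bracket $r_j$ inside the orbit $\{f^n(r_i)\}$, which is monotone since $f$ is orientation preserving; comparing the two brackets forces $\lfloor s\rfloor=\lfloor t\rfloor$ in the generic case (the integer cases are absorbed by the robust definition of a rotation form), so that
\[
\Big|\int_{r_i}^{r_j}\beta\Big| < 1 .
\]
By repetitivity this extends, up to a bounded additive error, to all pairs $x_1,x_2$, so $\beta$ is asymptotically negligible; in particular $\Omega_\mu$ and $\Omega_\nu$ are topologically conjugate (so $[\nu]$ is irrational too) and there is a wPE function $g$ on $\Omega_\mu$ with $\beta = dg$.

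I would then reduce ``$[\mu]=[\nu]$'' to a local-constancy statement. Passing to $\Omega_\mu$, where $\mu$ becomes $dy$, the class $[\mu]-[\nu]$ vanishes precisely when $g$ is sPE, i.e.\ constant on each $R$-cylinder $[P]$. Since the translation flow on $\Omega_\mu$ is minimal, the orbit of the base point is dense in $[P]$, so it suffices to prove that the values $G_j := \int_{r_0}^{r_j}\beta = g(\Gamma_{u_j}T)-g(T)$, with $u_j=\int_0^{r_j}\mu$, are constant along the occurrences of each fixed $R$-pattern $P$. The first step shows their oscillation is $<1$; the entire content is to improve this to $0$.

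The heart of the matter — and the step I expect to be the main obstacle — is this improvement to $0$, which is where irrationality enters. Suppose the oscillation of $G$ over the $P$-occurrences were some $\theta>0$. Because $g$ is wPE, ``$G_j$ near its supremum'' and ``$G_j$ near its infimum'' are conditions on the pattern around $r_j$ out to a larger radius; they cut out sub-cylinders $[Q'],[Q]\subset[P]$, all of whose occurrences still agree with one another out to radius $R$, so the equal-floors relation of the first step applies to any pair drawn from them. Now I would invoke minimality of $\Gamma_1$: given any cylinder and any target phase $\phi\in[0,1)$, denseness of the integer-translation orbit produces an occurrence $r_j$ with $u_j \bmod 1$ arbitrarily close to $\phi$ (translate an element of the cylinder by $-\phi$ and approximate it by an integer translate of the base tiling). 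Fixing $r_{i'}\in[Q']$, so $G_{i'}\approx\sup$, and then choosing $r_{j'}\in[Q]$, so $G_{j'}\approx\inf$, whose phase makes $u_{j'}-u_{i'}$ fall just below an integer $N$, within $\theta$, I obtain $\int_{r_{i'}}^{r_{j'}}\nu = (u_{j'}-u_{i'})-(G_{j'}-G_{i'})$ pushed above $N$, so $\lfloor\int\nu\rfloor = N > N-1 = \lfloor\int\mu\rfloor$, contradicting the equal-floors relation. Hence $\theta=0$, $g$ is sPE, and $\mu,\nu$ are cohomologous.

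The delicate points to watch are: controlling simultaneously the \emph{local} data ($G_{i'},G_{j'}$ near the extremes, which pins down the refining sub-patterns) and the \emph{global} data ($u_{j'}-u_{i'}\bmod 1$, controlled by minimality), while keeping $i'$ and $j'$ occurrences of the \emph{same} coarse pattern $P$ so that the floor relation remains available; arranging the integrals to be non-integers so the clean floor comparison applies; and justifying the phase-denseness from $\Gamma_1$-minimality rather than from mere recurrence. It is exactly this phase-denseness that fails when $[\mu]$ is rational — the phases then cluster near an arithmetic progression — which is consistent with the paper's observation that rational rotation classes need not be unique.
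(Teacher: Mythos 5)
Your proposal is correct and follows essentially the same route as the paper's proof: the equal-floors comparison on returns to a common pattern, asymptotic negligibility of the difference with a wPE primitive $g$ (via Gottschalk--Hedlund), and then minimality of $\Gamma_1$ on $\Omega_\mu$ (irrationality) combined with weak pattern equivariance of $g$ to position a return just below an integer and force a floor contradiction, so that $g$ must be sPE. The only cosmetic difference is that the paper pulls everything back to $\Omega_\mu$ at the outset (comparing $dx$ with $\nu'$), whereas you run the first step on $\Omega$ itself before passing to $\Omega_\mu$.
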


\begin{proof} We do a shape change $S: \Omega \to \Omega_\mu$ 
by $\mu$. Using $S$ and $S^{-1}$, we transfer quantities from
$\Omega$ to $\Omega_\mu$. In the process, $\mu$ becomes 
$(S^{-1})^* \mu = dx$, 
$\nu$ becomes $\nu' = (S^{-1})^*(\nu)$, and $F$ becomes $F' = S\circ F
\circ S^{-1}$. By construction, $dx$ and $\nu'$ and 
are rotation forms on $\Omega_\mu$ for $F'$. We will show
that $dx$ and $\nu'$ are cohomologous on $\Omega_\mu$, which will imply
that $\mu$ and $\nu$ are cohomologous on $\Omega$. 

Let $T' \in \Omega_\mu$ be a reference tiling, and as usual
we identify the translational orbit of $T'$ with $\R$.
Let $x_1$ and $x_2$ be corresponding points in sufficiently large identical patches 
in $T'$. Since $dx$ is a rotation class, if there is
an integer $n$ such that $n < x_2-x_1 < n+1$, then $(f')^n(x_1) < x_2 < (f')^{n+1}(x_1)$.
Similarly, if $m < \int_{x_1}^{x_2} \nu' < m+1$ for some integer $m$, then 
$(f')^m(x_1) < x_2 < (f')^{m+1}(x_2)$. That is, $m$ must equal $n$, so whenever $x_2-x_1$ is 
not an integer, we must have 
$\lfloor \int_{x_1}^{x_2} \nu'\rfloor = \lfloor x_2-x_1 \rfloor$.

Since the integral of $\nu'-dx$ is bounded (by one) on returns to similar patches,
and since these patches appear with bounded gaps, the integral of $\nu'-dx$ is
bounded, so $\nu' - dx$ is asymptotically negligible. Thus there is 
a function $G$ with wPE displacement on $\Omega_\mu$, and a corresponding function $g$ with wPE displacement on $\R$, 
such that $\nu' = dx + dg$. Our goal is to show that $g$ has sPE displacement, hence that
$dx$ and $\nu'$ are cohomologous. 

If the displacement of $g$ is
not sPE, then there exists a positive $\epsilon$, and 
corresponding points $x_3$ and $x_4$ in identical large patches 
such that $g(x_4) - g(x_3) > \epsilon$. Since 
the action of $\Gamma_1$ is minimal on $\Omega_\mu$, we can find an integer $n$ such 
that $n+ x_3$
is less than $\epsilon/4$ to the right of a point $x_5$, such that
an arbitrarily large patch around $x_5$ agrees with the corresponding patch 
around $x_4$. Since $g$ is continuous, by making this patch large enough we can ensure that
$|g(x_5)-g(x_4)| < \epsilon/2$. Thus we have 
$g(x_5) \ge g(x_3) + \epsilon/2$ and $n - \epsilon/4 < x_5-x_3 < n$ for
some integer $n$. But then $\int_{x_3}^{x_5} \nu' =x_5-x_3 + g(x_5)-g(x_3)$ is slightly
{\em greater} than $n$, and the integer parts of $x_5-x_3$ and $\int_{x_3}^{x_5} \nu'$ do not
agree, which is a contradiction.

We conclude that $g$ does in fact have sPE displacement, so $\nu'$ and $dx$ are cohomologous, so $\mu$ and $\nu$ are cohomologous. 
\end{proof}

The assumption that $\mu$ is irrational cannot be removed. 
In Section \ref{sec-examples} we construct a map on a tiling
space that admits multiple rotation classes, all of them
rational. 

\subsection{Existence and irrationality imply semi-conjugacy}
    
\begin{theorem} Suppose that the map $F:\Omega \to \Omega$ has a rotation class $[\mu]$ that is irrational. Then $F$ is semi-conjugate to uniform translation by $1$ on $\Omega_\mu$.
\label{irr implies semiconj}
\end{theorem}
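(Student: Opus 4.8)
The plan is to reduce, via the shape change by $\mu$, to the situation already settled by Theorem \ref{poincare}. Since $[\mu]$ is irrational it is represented by a positive sPE rotation form $\mu$ (replacing $\mu$ by $-\mu$ if necessary), so the shape-changed space $\Omega_\mu$ and the translation $\Gamma_1$ on it are defined, and by hypothesis $\Gamma_1$ is minimal on $\Omega_\mu$. Let $S\colon \Omega \to \Omega_\mu$ be the shape-change homeomorphism and set $F' = S\circ F\circ S^{-1}$. As in the proof of Theorem \ref{thm-unique}, $S$ carries $\mu$ to the constant form $dx$, and $dx$ is a rotation form for $F'$. A semi-conjugacy $H'\colon \Omega_\mu \to \Omega_\mu$ of $F'$ to $\Gamma_1$ then yields the desired semi-conjugacy $H = H'\circ S$ of $F$, since $H\circ F = H'\circ S\circ F = H'\circ F'\circ S = \Gamma_1\circ H'\circ S = \Gamma_1\circ H$. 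Because $S$ sends translation orbits to translation orbits (and orbit closures to orbit closures), $\Omega_\mu$ is again compact, minimal, and uniquely ergodic, and $F' \in \F(\Omega_\mu)$. Finally, a positive rotation form forces $f'$ to move every point strictly forward: choosing a return point $x_2$ just to the right of a given $x_1$ and applying the rotation-form inequality with $n_-=0$ shows $f'(x_1) > x_1$ (otherwise monotonicity of $f'$ would keep the whole forward orbit of $x_1$ at or below $x_1$, contradicting that some iterate overtakes $x_2$); hence $F'$ has no fixed points and its displacement $\phi' = f'-\mathrm{id}$ has a positive minimum $\phi_{\min}$ on the compact space.

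The crux is to show that $F'$ has rotation number $\rho'=1$ and is $\rho'$-bounded, i.e.\ that $|(f')^n(y) - y - n|$ is bounded uniformly in $y \in \R$ and $n \in \Z$. Here I would use that $dx$ is a rotation form of radius $R$ together with repetitivity and FLC: there is a radius $R'$ such that every ball of radius $R'$ contains a copy of the $R$-patch around any prescribed point. Given $y$ and $n>0$, set $x'=(f')^n(y)$ and choose $x_2$ with $|x_2 - x'|\le R'$ whose $R$-patch agrees with that of $y$. Writing $k = \lfloor x_2 - y\rfloor$, the rotation-form inequality gives $(f')^{k}(y) < x_2 < (f')^{k+1}(y)$. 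Since consecutive points of the orbit of $y$ are spaced between $\phi_{\min}$ and $\phi_{\max}$, comparing $(f')^k(y)$ with $x' = (f')^n(y)$ bounds $|k-n|$ by $(\phi_{\max}+R')/\phi_{\min}$; combined with $|x_2 - y - k| < 1$ and $|x' - x_2|\le R'$ this yields a uniform bound $|(f')^n(y) - y - n| \le C$. The case $n<0$ is symmetric, using $(f')^{-1}$. In particular $(f')^n(y)/n \to 1$, so $\rho' = 1$, and $(f')^n(y) - y - n\rho'$ is uniformly bounded, which is exactly $\rho'$-boundedness.

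With these facts in hand the proof concludes quickly. The irrationality of $[\mu]$ is by definition the minimality of $\Gamma_1$ on $\Omega_\mu$, which is precisely the statement that the rotation number $\rho' = 1$ of $F'$ is irrational in the sense of Theorem \ref{poincare}. Thus $F'$ is an orientation-preserving, fixed-point-free homeomorphism of the compact minimal uniquely ergodic space $\Omega_\mu$, with irrational rotation number $\rho'=1$, and is $\rho'$-bounded; Theorem \ref{poincare} then provides a semi-conjugacy $H'$ of $F'$ to $\Gamma_1$, and $H = H'\circ S$ completes the argument. The main obstacle is the middle step: extracting the uniform bound on $(f')^n(y)-y-n$ purely from the combinatorial rotation-form condition. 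This is where the geometry of the tiling enters, since the rotation-form inequality controls $f'$ only at pairs of points with matching $R$-patches; it is repetitivity together with FLC (bounded gaps between return patches) and the positive lower bound on the displacement that upgrade these pointwise controls to a bound uniform in both $y$ and $n$.
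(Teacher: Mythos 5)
Your proposal is correct and follows essentially the same route as the paper: perform the shape change $S:\Omega\to\Omega_\mu$ so that $\mu$ becomes $dx$, check that $F'=S\circ F\circ S^{-1}$ satisfies the hypotheses of Theorem \ref{poincare} with $\rho=1$ (irrationality of $\rho=1$ being exactly minimality of $\Gamma_1$ on $\Omega_\mu$), and compose the resulting semi-conjugacy with $S$. The only real difference is that where the paper invokes Theorems \ref{rho-exists} and \ref{theorem9} to obtain the rotation number and $\rho$-boundedness of $F'$, you re-derive the uniform bound on $(f')^n(y)-y-n$ directly from the rotation-form inequality via return patches --- which is in substance the same argument the paper gives inside its proof of Theorem \ref{theorem9}.
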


\begin{proof} The proof 
follows the same idea as the proof of Theorem \ref{thm-unique}. 
We start by doing  a shape change $S: \Omega \to \Omega_\mu$ 
by $\mu$. Pulling everything back to $\Omega_\mu$, we check that $\mu$ becomes $dx$, $F$ becomes $F' = S\circ F
\circ S^{-1}$ and that $dx$ is a rotation form for $F'$. Since $[\mu]$ is irrational, translation by $1$ on $\Omega_\mu$ is minimal, that is, $1$ is $\Omega_\mu$-irrational. 
By Theorem \ref{rho-exists}, $F'$ is $\rho$-bounded with $\rho=1$, which in turn implies that $F'$ preserves
orientation. Since $F'$ satisfies the assumptions of Theorem \ref{poincare} with $\rho=1$, there is a new map $H':\Omega_\mu\rightarrow\Omega_\mu$ 
which semi-conjugates $F'$ to translation by $1$. Hence $F$ is semi-conjugate, by $H' \circ S$, to translation by $1$ on $\Omega_\mu$. 
\end{proof}

Note that this theorem proves the existence of a semi-conjugacy, but does not prove
the existence of a {\em local} semiconjugacy. The results of this section
can be summarized as follows:
\begin{enumerate}
    \item If $\mu$ does not exist, then $F$ is not locally semi-conjugate to the time-1 sampling of an sPE flow.
\item If $\mu$ exists and is irrational, then $F$ is 
semi-conjugate to the time-1 sampling of an sPE flow. In the next section, we will see
that this semi-conjugacy is in fact local. 
\item If $\mu$ exists and is rational, there is no reason to expect $F$ 
to be semi-conjugate to the time-1 sampling of an sPE flow, or even of any flow. 
In fact, by modifying examples for 
circle maps, it is easy to construct examples of $F$'s that are not. 
\end{enumerate}

\section{Exploring the semi-conjugacy.}\label{sec-Denjoy}  

In this section we assume throughout that $\mu$ exists and is irrational, hence that $F$ 
is semi-conjugate to the time-1 sampling of a flow. We examine the possible 
form of this semi-conjugacy. First we show that the semi-conjugacy must be a local 
map. This allows for 
a strengthening of Theorem \ref{irr implies semiconj}, completing the proof of 
Theorem \ref{main2-1}. We then address Theorems \ref{main4-1} 
 and \ref{main5-1}. 
Finally, we consider the barriers to showing that 
the semi-conjugacy is actually a conjugacy. That is, we set up for framework for
the analogue of Denjoy's Theorem, and explain why the usual proofs for circle maps 
cannot be carried over to tilings. Stating and proving the analogue of Denjoy's Theorem
is an intriguing problem for future work. 

Let $F$ be a map on $\Omega$ with irrational  rotation form $\mu$. Applying a shape change by $\mu$, let $\Omega_\mu$ be the space whose rotation class is exactly $dx$. We will use the convention that maps denoted with $\tilde{\ }$ refer to $\Omega_\mu$, and so $\tilde{F}$ is a map on $\Omega_\mu$, where $\tilde{F}(T)=T-\tilde{\Phi}(T)$. As $\tilde F$ is iterated under $n$, let $\tilde \Phi^{n}$ be the displacement of $F^n(T)$, defined by $\tilde{F}^n(T)=T-\tilde{\Phi}^n(T)$. For $T \in \Omega_\mu$, let 
\begin{equation} 
\tilde{\Psi}(T)= \limsup_{n\in \N}\{\tilde \Phi^n(T)-n\}, \qquad \tilde{\jmath}(T)=T-\tilde \Psi(T),
\end{equation}
so that $\tilde{\Psi}:\Omega_\mu \rightarrow \R$ and $\tilde{\jmath}:\Omega_\mu \rightarrow \Omega_\mu$.

For a fixed reference tiling $\tilde{T} \in \Omega_\mu$  whose orbit is identified with $\R$, let $\tilde{f} = \tilde{F}|_{\tilde{T}}$ and $\tilde{f}(x)=x + \tilde{\phi}(x)$, as usual.
 For $x \in \tilde{T}$, let 
\begin{equation} 
\tilde{\psi}(x) = \limsup_{n\in \N} \{\tilde{\phi}^n(x) -n\}, \qquad \tilde{\jmath}(x)=x + \tilde{\psi}(x),
\end{equation} 
where $\tilde{\jmath}:\tilde{T} \rightarrow \tilde{T}$ and $\tilde{\psi}$ is its displacement. As with $\tilde \Phi^n$, $\tilde{\phi}^n(x)$ represents the displacement of $\tilde f^n(x)$, rather than the $n$th composition of $\tilde \phi$ with itself.

The proof of Theorem \ref{poincare} (see \cite{aliste2010translation}) shows that
 $\tilde{\Psi}$ and $\tilde{\jmath}$  are continuous on the orbit of $\tilde{T}$ , and that $\tilde{\jmath}$ extends by continuity to a semi-conjugacy.
Since  $\tilde{f}$ is monotonic, the function $\tilde{\jmath}$  is non-decreasing. If  $\tilde{\jmath}$  is strictly
increasing, then $\tilde{\jmath}$ is a conjugacy. However, if there are intervals where  $\tilde{\jmath}$ 
is constant, or equivalently where  $\tilde{\psi}$  has derivative $-1$, then $\tilde{\jmath}$ is not injective.
Distinguishing between these two cases boils down to understanding  $\tilde{\psi}$. 

\subsection{Strong pattern equivariance and locality}

\begin{theorem}
The function $\tilde{\psi}$ is sPE. That is, there exists a 
constant $R$ such that if $\tilde{T}-x$ and $\tilde{T}-y$ agree out to distance $R$, then $\tilde{\psi}(x) = \tilde{\psi}(y) $. \label{H-is-sPE}
\end{theorem}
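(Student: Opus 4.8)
The plan is to show that the displacement $\tilde\psi$ is strongly pattern equivariant by locating a radius $R$ that controls it, exploiting the fact that on $\Omega_\mu$ the rotation form is exactly $dx$. The key structural input is that $dx$ is a rotation form for $\tilde f$, which means that for corresponding points in $R$-identical patches, the number of iterates needed to cross a given (integer) displacement is pinned down exactly. So my strategy is to compare $\tilde\psi(x)$ and $\tilde\psi(y)$ when $\tilde T - x$ and $\tilde T - y$ agree out to some large distance, and argue that the defining $\limsup$ sequences $\{\tilde\phi^n(x) - n\}$ and $\{\tilde\phi^n(y) - n\}$ are forced to track each other.

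First I would recall that since $dx$ is a rotation form for $\tilde f$ with some radius $R_0$, the orbit structure of $\tilde f$ is constrained: whenever $\tilde T - x_1$ and $\tilde T - x_2$ agree on $B_{R_0}(0)$ and $n < x_2 - x_1 < n+1$, then $\tilde f^n(x_1) < x_2 < \tilde f^{n+1}(x_1)$. The crucial consequence I would extract is that the quantity $\tilde\phi^n(x) - n = \tilde f^n(x) - x - n$, as a function of $n$, essentially measures the discrepancy between how far the orbit has gone and the ideal uniform motion, and this discrepancy at time $n$ is determined by the pattern the orbit point currently sees. Concretely, I would try to show that $\tilde f^n(x) - x - n$ is bounded (this is exactly $\rho$-boundedness with $\rho = 1$, which holds here because $[\mu]$ is irrational so translation by $1$ is minimal on $\Omega_\mu$), so the $\limsup$ is finite, and moreover that the value of $\tilde f^n(x)$ modulo the integer lattice is an sPE function of the current local pattern.

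The heart of the argument is to express $\tilde\psi(x)$ in terms that depend only on the local pattern around $x$. I would fix $\eps > 0$ and choose $n_k \to \infty$ realizing the $\limsup$ for $x$, so that $\tilde\phi^{n_k}(x) - n_k \to \tilde\psi(x)$. If $\tilde T - x$ and $\tilde T - y$ agree on a very large ball, then I want to produce matching iterate counts so that $\tilde\phi^{n_k}(y) - n_k$ is within $\eps$ of $\tilde\phi^{n_k}(x) - n_k$. The mechanism is: the points $\tilde f^{n_k}(x)$ and $\tilde f^{n_k}(y)$ start from $R$-identical neighborhoods, and since $dx$ is a rotation form, the integer number of steps to reach any given return point of the pattern is identical for both orbits; the fractional discrepancies are controlled by continuity of $\tilde f$ in the $\R$-direction together with the sPE nature of the displacement $\tilde\phi$ itself. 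Using repetitivity and FLC, returns to the $R$-pattern around $x$ are syndetic, so I can approximate the relevant comparison points within the accuracy dictated by the chosen radius.

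The main obstacle I expect is the passage from pointwise control of finitely many iterates to uniform control of the $\limsup$, because $\limsup$ is not continuous and the iterate counts realizing it for $x$ and for $y$ need not coincide. To handle this I would argue symmetrically: show $\tilde\psi(y) \ge \tilde\psi(x) - \eps$ and $\tilde\psi(x) \ge \tilde\psi(y) - \eps$ by swapping the roles of $x$ and $y$, each time transferring a near-optimal iterate sequence from one orbit to the other via the rotation-form rigidity and the sPE-ness of $\tilde\phi^n$ for each fixed $n$. The delicate point is ensuring that the radius needed to control $\tilde\phi^n$ for the relevant $n$ does not blow up with $n$; here I would lean on the boundedness of $\tilde f^n(x) - x - n$ (so only orbit points in a bounded window matter at each step) combined with the fact that $dx$ being a rotation form already encodes the iterate count exactly from a single fixed radius $R_0$, independent of $n$. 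Assembling these, a single radius $R$ (the maximum of $R_0$, the sPE radius of $\tilde\phi$, and the repetitivity radius for the required patch size) will control $\tilde\psi$, giving sPE-ness.
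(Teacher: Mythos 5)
Your proposal has a genuine gap at its central step: the transfer of a limsup-realizing sequence from $x$ to $y$. You want to find, for a sequence $n_k\to\infty$ with $\tilde{\phi}^{n_k}(x)-n_k\to\tilde{\psi}(x)$, matching iterate counts so that $\tilde{\phi}^{n_k}(y)-n_k$ is within $\epsilon$ of $\tilde{\phi}^{n_k}(x)-n_k$. But the sPE radius of $\tilde{\phi}^n$ grows linearly in $n$: after $n$ steps the orbit of $x$ sits near $x+n$, so determining $\tilde{f}^n(x)$ requires knowing the tiling out to distance roughly $n$ times the maximal displacement. Hence for $n$ beyond (roughly) $R$, the points $\tilde{f}^n(x)$ and $\tilde{f}^n(y)$ lie in regions near $x+n$ and $y+n$ where the patterns of $\tilde{T}$ no longer agree --- the tiling is aperiodic --- and nothing forces the two sequences to stay close. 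Your two proposed repairs do not fix this: boundedness of $\tilde{f}^n(x)-x-n$ only confines the orbit to a window around $x+n$, and that window runs off to infinity; and the rotation-form property with its fixed radius $R_0$ constrains crossing counts between points whose $R_0$-patterns match, but says nothing relating what the two orbits see at time $n$. Since the limsup is determined precisely by the tail $n\to\infty$, which is exactly the regime where tracking fails, the argument collapses. Indeed, the real content of the theorem is that the limsups agree even though the sequences $\tilde{\phi}^n(x)-n$ and $\tilde{\phi}^n(y)-n$ need not track each other.

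The paper's proof avoids orbit tracking entirely, and the ideas it uses instead are the ones missing from your sketch. First, order preservation: if $y>\tilde{f}^k(x)$ then $\tilde{f}^n(y)>\tilde{f}^{n+k}(x)$ for every $n$, and since the limsup is unchanged by shifting the index by $k$, this yields the one-sided inequality $\tilde{\psi}(x)-\tilde{\psi}(y)< y-x-\lfloor y-x\rfloor$ whenever the patterns at $x$ and $y$ match merely out to $R_0$ --- a bound by the fractional part of $y-x$, obtained with no comparison of orbits at matched times. Second, wPE continuity of $\tilde{\psi}$, which descends from the continuous function $\tilde{\Psi}$ on $\Omega_\mu$. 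Third --- and this is where irrationality enters essentially, not merely for boundedness as in your sketch --- minimality of translation by $1$ on $\Omega_\mu$: integer translates of any tiling are dense, so a hypothetical sPE-violating pair $x_1,x_2$ with $\tilde{\psi}(x_1)-\tilde{\psi}(x_2)=\epsilon>0$ can be relocated, using continuity, to a pair $y_1,y_2$ in the same two pattern classes whose separation $y_2-y_1$ has fractional part less than $\epsilon/3$; the one-sided inequality then forces $\tilde{\psi}(y_1)-\tilde{\psi}(y_2)<\epsilon/3$, contradicting $\tilde{\psi}(y_1)-\tilde{\psi}(y_2)>\epsilon/3$. If you want to salvage your write-up, the monotonicity-plus-index-shift observation is the key idea to aim for; no argument that compares the two orbits at equal iteration counts can succeed.
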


\begin{proof} Let $R_0$ be the radius of the rotation form
$dx$. By the definition of a rotation form, if $\tilde{T}-x$ and $\tilde{T}-y$ agree out to distance $R_0$, and if 
$y-x$ is not an integer, 
then $(\tilde{f})^{L}(x) < y <(\tilde{f})^{L+1}(x)$, where $L = \lfloor y-x \rfloor$.  

Suppose that $\tilde{T}-x$ and $\tilde{T}-y$ agree to distance $R_0$, and that
$k$ is an integer such that 
$y > (\tilde{f})^k(x)$. Then
\begin{eqnarray} 
  (\tilde{f})^n(y) & >&   (\tilde{f})^{n+k}(x), \cr
  y + \tilde{\phi}^n(y) & > &  x + \tilde{\phi}^{n+k}(x), \cr
  \tilde{\phi}^n(y) -n & > &  \tilde{\phi}^{n+k}(x) - n - y + x, \cr
   & = &  \tilde{\phi}^{n+k}(x) - (n + k) - (y -k - x). 
 \end{eqnarray}
 As a result, 
 \begin{eqnarray} 
  \limsup_{n\in \N} \{(\tilde{\phi})^n(y) -n\} & > &  \limsup_{(n+k)\in \N} \{(\tilde{\phi})^{n+k}(x) -(n+k)\}  - (y -k - x), \cr
    \tilde{\psi}(y) & > &  \tilde{\psi}(x)  - (y-k-x). 
\end{eqnarray}
If $y-x$ is not an integer, then we can take $k=\lfloor y-x
\rfloor$, so 
\begin{equation}
    \tilde{\psi}(x) - \tilde{\psi}(y)< y - x - \lfloor y - x \rfloor. \label{H_T}
\end{equation}

The function $\tilde{\psi}$ comes from a continuous function on our 
tiling space $\Omega_\mu$, and so is wPE. If $\tilde{\psi}$ is not sPE, 
then there exist points $x_1$ and $x_2$ such that $\tilde{T}-x_1$ and 
$\tilde{T}-x_2$ agree out to $R_0$, and such that $\tilde{\psi}(x_1) \neq 
\tilde{\psi}(x_2)$. By swapping $x_1$ and $x_2$ if necessary, we can
assume that $\tilde{\psi}(x_1) = \tilde{\psi}(x_2) + \epsilon$, 
where $\epsilon > 0$. We will show that this is impossible. 

Since $\tilde{\psi}$ is wPE, there is a radius $R_1 > R_0$ such that if 
$\tilde{T}-y_1$ and $\tilde{T}-y_2$ agree out to $R_1$, then $\tilde{\psi}(y_1)$ and 
$\tilde{\psi}(y_2)$ are within $\epsilon/3$ of each other.

Let $V_1$ be the set of $x$'s such that $\tilde{T}-x$ agrees with $\tilde{T}-x_1$ out to distance $R_1$, and let $V_2$ be the set of $x$'s such that $\tilde{T}-x$ agrees with $\tilde{T}-x_2$ out to $R_1$. Then for every $y_1 \in V_1$ and $y_2 \in V_2$, $\tilde{\psi}(y_1)$ is within  $\epsilon/3$ of $\tilde{\psi}(x_1)$ and  $\tilde{\psi}(y_2)$ is within  $\epsilon/3$ of $\tilde{\psi}(x_2)$. Since $\tilde{\psi}(x_1)-\tilde{\psi}(x_2)=\epsilon$,  
we must have $\tilde{\psi}(y_1) - \tilde{\psi}(y_2) > \epsilon/3$.

Since $dx$ is irrational, the set of integer translates of 
$\tilde{T}-y_1$ is dense in $\Omega_\mu$. In particular, there 
is an integer $k$ such that $\tilde{T}-(y_1+k)$ agrees with 
$\tilde{T}-(x_2 - \frac{\epsilon}{6})$ on a ball of radius at least 
$R_1 + \frac{\epsilon}{6}$, 
up to a translation by less than
$\frac{\epsilon}{6}$. That is, there is a point $y_2 \in V_2$
such that 
\begin{equation}
    y_1 +k < y_2 < y_1 + k + \frac{\epsilon}{3}.
\end{equation}
However, $\tilde{T}-y_1$ and $\tilde{T}-y_2$ agree to distance at least 
$R_0$, so $\tilde{\psi}(y_1) - \tilde{\psi}(y_2) < \frac{\epsilon}{3}$ by equation 
(\ref{H_T}), which is a contradiction.
\end{proof}

\begin{cor}
Suppose that the map $F:\Omega \to \Omega$ has a rotation class $[\mu]$ that is irrational. Then $F$ is locally semi-conjugate to uniform translation by $1$ on $\Omega_\mu$.
\label{irr implies local semiconj}
\end{cor}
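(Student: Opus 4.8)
The plan is to show that the semi-conjugacy $\tilde{\jmath} \circ S$ constructed in Theorem \ref{irr implies semiconj} is in fact \emph{local}, in the sense defined in Section \ref{sec-Poincare}. Recall that locality requires that the pullback under the semi-conjugacy of the rotation form $dx$ on the target space be sPE. Since $S: \Omega \to \Omega_\mu$ is a shape change (hence manifestly a local map that sends $dx/v$ to $dx$), the only thing that must be checked is that the semi-conjugacy $\tilde{\jmath}: \Omega_\mu \to \Omega_\mu$ produced by Poincar\'e's theorem respects the local product structure. Concretely, $\tilde{\jmath}$ has displacement $\tilde{\Psi}$ on $\Omega_\mu$, with corresponding displacement $\tilde{\psi}(x) = \limsup_n \{\tilde{\phi}^n(x) - n\}$ on the reference orbit, and $\tilde{\jmath}$ is local precisely when $\tilde{\psi}$ is sPE rather than merely wPE.

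That sPE property is exactly the content of Theorem \ref{H-is-sPE}, which I would invoke directly. So the proof is short: Theorem \ref{irr implies semiconj} gives that $\tilde{\jmath} \circ S$ semi-conjugates $F$ to $\Gamma_1$ on $\Omega_\mu$; Theorem \ref{H-is-sPE} shows that the displacement $\tilde{\psi}$ of the $\Omega_\mu$-side semi-conjugacy $\tilde{\jmath}$ is sPE. Since $S$ is a shape change and shape-change maps pull back $dx$ (the rotation form of $\Omega_\mu$) to $dx/v(x)$, which is sPE, composing with $S$ preserves the sPE property of the relevant pullback. Hence $\tilde{\jmath} \circ S$ is a local semi-conjugacy, and $F$ is locally semi-conjugate to uniform translation by $1$ on $\Omega_\mu$.

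The substantive work has already been done in Theorem \ref{H-is-sPE}; the corollary is essentially a matter of assembling the pieces and checking definitions. The only point requiring a moment's care is unwinding the definition of ``local semi-conjugacy'' from Section \ref{sec-Poincare} and confirming that it reduces, under the shape change $S$, to the sPE-ness of $\tilde{\psi}$ on $\Omega_\mu$. This is where I expect the sole (mild) obstacle to lie: making sure the composite $\tilde{\jmath} \circ S$ fits the exact framework in which ``local'' was defined, namely that the pullback of the target's rotation form is sPE, and that no subtlety is introduced by the $\limsup$ defining $\tilde{\psi}$. Once the sPE-ness from Theorem \ref{H-is-sPE} is in hand, the conclusion follows immediately.
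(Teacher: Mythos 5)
Your proposal is correct and takes essentially the same route as the paper's own proof: both factor the semi-conjugacy from Theorem \ref{irr implies semiconj} as the shape change $S$ followed by the limsup map $\tilde{\jmath}$, invoke Theorem \ref{H-is-sPE} to get that the displacement $\tilde{\psi}$ is sPE, and conclude that the composite pulls back $dx$ to an sPE form because shape changes and translations by sPE functions preserve the local product structure. (One small slip worth fixing: in this generality $F$ need not come from a flow, so the pullback of $dx$ under $S$ is the rotation form $\mu$ itself rather than $dx/v(x)$; this is harmless since $\mu$ is sPE by definition.)
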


\begin{proof}
By Theorem \ref{irr implies semiconj}, $F$ is semi-conjugate to uniform translation
on $\Omega_\mu$. 
By Theorem \ref{H-is-sPE}, this semi-conjugacy is the composition of a shape change and
translation by the sPE function $\tilde{\Psi}$. Since both shape changes and translations by sPE functions
preserve the local product structure of tiling spaces, the pullback of $\tilde{\mu}=dx$ by the 
composition of these maps is an sPE form, so our semi-conjugacy is local. 
\end{proof}

\subsection{Proof of Theorems \ref{main4-1} and \ref{main5-1}}
We now turn to the proofs of Theorems \ref{main4-1} and \ref{main5-1}, which we restate for easy reference.

\begin{theorem} [Theorem \ref{main4-1}] Suppose that $F \in \F(\Omega)$, 
that $[\mu]$ exists and is irrational, and that $[\mu] \in \R \, dx \oplus 
H^1_{AN}(\Omega,\R) \subset H^1(\Omega,\R)$. Then $F$ is semi-conjugate to a uniform 
translation on $\Omega$. 
\end{theorem}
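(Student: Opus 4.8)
The plan is to combine the semi-conjugacy to uniform translation on $\Omega_\mu$, which we already have for free from the irrationality hypothesis, with the shape-change theory in order to carry that semi-conjugacy back to $\Omega$ itself. First I would invoke Theorem \ref{irr implies semiconj}: since $[\mu]$ is irrational, there is a map $H \colon \Omega \to \Omega_\mu$ satisfying $H \circ F = \Gamma_1 \circ H$, so $F$ is semi-conjugate to translation by $1$ on $\Omega_\mu$. The whole game is then to recognize $\Omega_\mu$, up to topological conjugacy of its translation action, as a uniform rescaling of $\Omega$.

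Next I would unpack the hypothesis $[\mu] \in \R\,dx \oplus H^1_{AN}(\Omega,\R)$. Write $\mu = c\,dx + \beta$ cohomologously, with $c \in \R$ and $\beta$ asymptotically negligible. The key preliminary point is that $c \neq 0$: an asymptotically negligible form is the derivative of a wPE (hence bounded) function and so has bounded integral, whereas a positive sPE form such as $\mu = dx/v$ (with $v$ bounded) is bounded below, so $\int_0^x \mu$ grows linearly. Equivalently, $\rho \neq 0$ by Theorem \ref{rho-exists}, and comparing averages against the unique invariant measure forces $c$ to equal the nonzero average of $\mu$. Setting $\rho = 1/c$, the decomposition reads $\mu - dx/\rho = \beta$ with $\beta$ asymptotically negligible.

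With $c \neq 0$ in hand I can transport. Since $\mu$ and $dx/\rho$ are both positive sPE $1$-forms whose difference is asymptotically negligible, the shape-change results of \cite{Clark-Sadun06, Julien-Sadun} give a homeomorphism $K \colon \Omega_\mu \to \Omega_{dx/\rho}$ conjugating $\Gamma_1$ to $\Gamma_1$. But the shape change by $dx/\rho$ is nothing more than a uniform dilation of $\Omega$ by $1/\rho$, so there is a rescaling homeomorphism $L \colon \Omega_{dx/\rho} \to \Omega$ conjugating $\Gamma_1$ on $\Omega_{dx/\rho}$ to $\Gamma_\rho$ on $\Omega$. Composing, I set $J = L \circ K \circ H \colon \Omega \to \Omega$ and check the intertwining relation
\[
J \circ F = L K H F = L K \Gamma_1 H = L \Gamma_1 K H = \Gamma_\rho L K H = \Gamma_\rho \circ J,
\]
so $J$ semi-conjugates $F$ to the uniform translation $\Gamma_\rho$ on $\Omega$, as desired.

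I expect the main obstacle to be the verification that $c \neq 0$ and, relatedly, the clean application of the topological-conjugacy criterion of \cite{Julien-Sadun}: one must be sure that $\mu - dx/\rho$ is genuinely the asymptotically negligible part and that $dx/\rho$ is a legitimate positive sPE form to which shape-change applies. Everything else is bookkeeping, as the chain of conjugacies and the single semi-conjugacy $H$ compose formally. Note that this argument produces \emph{some} semi-conjugacy to uniform translation, which is all that Theorem \ref{main4-1} asserts; it does not yet control whether the semi-conjugacy preserves path components, which is the extra content of Theorem \ref{main5-1}.
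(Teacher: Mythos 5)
Your proof is correct, and its skeleton matches the paper's: first invoke Theorem \ref{irr implies semiconj} to get $H\colon \Omega\to\Omega_\mu$ with $H\circ F=\Gamma_1\circ H$, then use the cohomological hypothesis to conjugate translation on $\Omega_\mu$ to a uniform translation on $\Omega$, and compose. The difference is in how the second step is executed. The paper applies its Lemma \ref{JS18-lemma} (quoted from \cite{Julien-Sadun}) as a black box: there is a map homotopic to $S^{-1}$ conjugating a fixed translation on $\Omega_\mu$ to a fixed translation on $\Omega$ if and only if $[\mu]\in\R\,dx\oplus H^1_{AN}(\Omega,\R)$, so the proof there is a one-line application of the ``if'' direction. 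You instead reconstruct that direction from more elementary ingredients: the explicit decomposition $[\mu]=c[dx]+[\beta]$ with $\beta$ asymptotically negligible; the verification that $c\neq 0$ (a positive sPE representative of $[\mu]$ has linearly growing integral, while asymptotically negligible forms have bounded integral by Gottschalk--Hedlund); the criterion of \cite{Clark-Sadun06} that shape changes by positive sPE forms differing by an asymptotically negligible form have topologically conjugate translation actions, giving $K\colon\Omega_\mu\to\Omega_{dx/\rho}$; and the trivial rescaling $L\colon\Omega_{dx/\rho}\to\Omega$. Your route buys two things: it makes explicit the nonvanishing of the $dx$-coefficient, a point the paper leaves buried inside the cited lemma, and it avoids the homotopy condition built into Lemma \ref{JS18-lemma}, which is genuinely irrelevant to Theorem \ref{main4-1} (it matters only for the ``only if'' direction of Theorem \ref{main5-1}, where one must control path components). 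What the paper's route buys is economy and uniformity: the same Lemma \ref{JS18-lemma} serves both Theorem \ref{main4-1} and Theorem \ref{main5-1}, so the path-component bookkeeping is set up once and reused.
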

\begin{theorem}[Theorem \ref{main5-1}] Suppose that $F \in \F(\Omega)$ and that $[\mu]$ exists and is irrational. $F$ is semi-conjugate to a uniform translation
on $\Omega$, via a
semi-conjugacy that sends each path component of $\Omega$ to itself, if and only if 
$[\mu] \in \R \, dx \oplus H^1_{AN}(\Omega,\R) \subset H^1(\Omega,\R)$.
\end{theorem}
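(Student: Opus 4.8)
The plan is to reduce both implications to Theorem~\ref{theorem9}, which asserts that $F$ is $\rho$-bounded if and only if $\mu-\frac{dx}{\rho}$ is asymptotically negligible, and then to repackage this in cohomological language. First observe that $\R\,dx\oplus H^1_{AN}(\Omega,\R)$ is a genuine direct sum: $[dx]$ has average value $1$ while every asymptotically negligible class has average value $0$, so the coefficient of $dx$ in $[\mu]$, if one exists, must be $\mathrm{avg}(\mu)$. By Proposition~\ref{what-is-rho} this average equals $1/\rho$, where $\rho$ is the rotation number of $F$; since $\mu$ is a positive form with finite positive average, $\rho\neq 0$. Consequently the membership condition $[\mu]\in\R\,dx\oplus H^1_{AN}(\Omega,\R)$ is \emph{equivalent} to the single statement that $\mu-\frac{dx}{\rho}$ be asymptotically negligible, which by Theorem~\ref{theorem9} is equivalent to $\rho$-boundedness of $F$. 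Thus the theorem reduces to: \emph{$F$ is semi-conjugate to $\Gamma_\rho$ by a path-component-preserving map if and only if $F$ is $\rho$-bounded.}

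For the ``only if'' direction I would argue directly. Suppose $J:\Omega\to\Omega$ semi-conjugates $F$ to $\Gamma_\rho$ and sends each path component to itself. Since path components coincide with orbits and the tilings are aperiodic, $J$ has the form $J(T)=T-\Psi(T)$ for a well-defined continuous $\Psi:\Omega\to\R$ (see~\cite{kwapisz2010topological}); as $\Omega$ is compact, $\Psi$ is bounded, say $|\Psi|\le M$. Writing the relation $J\circ F^n=\Gamma_{n\rho}\circ J$ in the orbit coordinate of a reference tiling $T$ gives $j(f^n(x))=j(x)+n\rho$ with $j(x)=x+\Psi(T-x)$, and rearranging yields
\begin{equation}
 f^n(x)-x-n\rho=\Psi(T-x)-\Psi\big(T-f^n(x)\big),
\end{equation}
whose right-hand side is bounded by $2M$ uniformly in $n$, $x$, and $T$. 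Hence $F$ is $\rho$-bounded, with $\rho$ recovered as $\lim_n (f^n(x)-x)/n$, so $\rho$ is the rotation number; as $\rho\neq 0$ this also forces $F$ to have no fixed points. By Theorem~\ref{theorem9}, $\mu-\frac{dx}{\rho}$ is asymptotically negligible, so $[\mu]=\tfrac1\rho[dx]+[\mu-\tfrac{dx}{\rho}]\in\R\,dx\oplus H^1_{AN}(\Omega,\R)$.

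For the ``if'' direction, assume $[\mu]\in\R\,dx\oplus H^1_{AN}(\Omega,\R)$, so that $\mu-\frac{dx}{\rho}$ is asymptotically negligible with $\rho=1/\mathrm{avg}(\mu)$. By Theorem~\ref{theorem9}, $F$ is $\rho$-bounded, and in particular $f$ is orientation preserving. I must also check that $\rho$ is irrational as a rotation number, i.e. that $\Gamma_\rho$ acts minimally on $\Omega$: since $\mu-\frac{dx}{\rho}$ is asymptotically negligible, the shape-change theory of~\cite{Clark-Sadun06} makes the translation action on $\Omega_\mu$ topologically conjugate to that on $\Omega_{dx/\rho}$, which is $\Omega$ rescaled by $1/\rho$, carrying $\Gamma_1$ on $\Omega_\mu$ to $\Gamma_\rho$ on $\Omega$; irrationality of $[\mu]$ (minimality of $\Gamma_1$ on $\Omega_\mu$) therefore transfers to minimality of $\Gamma_\rho$ on $\Omega$. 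Now $F$ satisfies the hypotheses of Poincar\'e's theorem (Theorem~\ref{poincare}) for this $\rho$, producing a semi-conjugacy to $\Gamma_\rho$. Because the semi-conjugacy constructed there (recalled in the $\tilde{\Psi}$ construction of Section~\ref{sec-Denjoy}) has the form $J(T)=T-\Psi(T)$, it translates each tiling within its own orbit and so automatically preserves path components, completing this direction.

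The main obstacle, and the precise reason for the path-component hypothesis, is the implication $|\Psi|\le M\Rightarrow\rho$-boundedness used in the ``only if'' direction. A general semi-conjugacy $\Omega\to\Omega$ need not carry orbits to orbits, hence need not be expressible as translation by a bounded $\Psi$, and then $f^n(x)-x-n\rho$ may grow without bound; this is exactly the loophole exploited by the map $F'=G^{-1}\circ F\circ G$ in the introduction, which shows that the converse of Theorem~\ref{main4-1} fails once the restriction is dropped. The one genuinely technical point to handle carefully is the claim that $\Psi$ is single-valued and continuous, which relies on aperiodicity of the tilings and on the local product structure of $\Omega$; everything else is bookkeeping built on Theorems~\ref{theorem9} and~\ref{poincare} together with the shape-change dictionary.
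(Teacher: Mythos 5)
Your ``if'' direction is correct, and it takes a genuinely different route from the paper's. You convert membership in $\R\,dx\oplus H^1_{AN}(\Omega,\R)$ into asymptotic negligibility of $\mu-\frac{dx}{\rho}$ (via averages and Proposition \ref{what-is-rho}), get $\rho$-boundedness from Theorem \ref{theorem9}, transfer irrationality of $[\mu]$ to minimality of $\Gamma_\rho$ on $\Omega$ via \cite{Clark-Sadun06}, and then run Poincar\'e's theorem (Theorem \ref{poincare}) directly on $\Omega$, observing that the $\limsup$ construction yields a semi-conjugacy of the form $T\mapsto T-\Psi(T)$, which automatically preserves path components. The paper instead builds its semi-conjugacy on $\Omega_\mu$ and transports it back to $\Omega$ using Lemma \ref{JS18-lemma}; both routes work, and yours avoids that lemma in this direction at the cost of re-deriving the rotation number cohomologically.

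The ``only if'' direction, however, has a genuine gap, and it sits exactly at the point you flag as ``the one genuinely technical point.'' You assume that the given semi-conjugacy $J$, being continuous and path-component-preserving, can be written as $J(T)=T-\Psi(T)$ with $\Psi$ \emph{continuous}, hence bounded by compactness. The reference \cite{kwapisz2010topological} establishes this friction property for \emph{homeomorphisms}, and that is the only way the paper ever uses it; a semi-conjugacy is typically not injective (indeed, whenever $F$ is not conjugate to a translation, $J$ must collapse intervals), and the standard friction arguments — Baire category to bound $\Psi$ on an open set, then propagation along orbits — rely on monotonicity of the restriction of the map to each orbit, which is precisely what is unavailable for a non-injective continuous map. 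Worse, the claim is not peripheral bookkeeping: writing the semi-conjugacy relation along an $F$-orbit and using aperiodicity gives $\Psi(F^n(T))=\Psi(T)+n\rho-\Phi^n(T)$, where $\Phi^n$ is the total displacement, so boundedness of $\Psi$ is essentially \emph{equivalent} to $\rho$-boundedness of $F$, which by Theorem \ref{theorem9} is essentially the conclusion you are trying to reach. In other words, this one step carries the full weight of the direction. The paper's proof is structured specifically to avoid it: it pushes the given semi-conjugacy to $\Omega_\mu$, factors it through the canonical $\limsup$ semi-conjugacy $\tilde J$ (whose displacement is known to be continuous, and even sPE by Theorem \ref{H-is-sPE}), uses irrationality and density of integer translates to show the factor map $\tilde H$ is a \emph{homeomorphism}, and only then invokes orbit-preservation implies homotopic-to-identity (Kwapisz, now legitimately applicable) together with Lemma \ref{JS18-lemma}. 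To repair your argument you would need either to prove a friction theorem for continuous orbit-preserving surjections — a statement not found in the cited literature — or to reproduce the paper's factoring argument, at which point the two proofs coincide.
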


Both proofs rely on the following
result of \cite{Julien-Sadun}.
\begin{lemma} \label{JS18-lemma}
Let $S: \Omega \to \Omega_\mu$ be a shape change map according to the sPE form $\mu$ on $\Omega$. There is a map,
homotopic to $S^{-1}$, that conjugates a fixed translation on $\Omega_\mu$ to a fixed translation on $\Omega$
(possibly by a different distance) if and only if $[\mu] \in \R\,dx \oplus H^1_{AN}(\Omega,\R)$. 
\end{lemma}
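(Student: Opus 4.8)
The plan is to prove Lemma \ref{JS18-lemma} by translating the problem into the language of the cohomological invariant $[G] \in \check H^1(\Omega,\R)$ that \cite{Julien17, Julien-Sadun} attach to homeomorphisms of tiling spaces, and then invoking their classification of when two tiling spaces are topologically conjugate via a map in a prescribed homotopy class. Recall that this invariant has the property that homotopic maps share the same class, and that the shape change map $S: \Omega \to \Omega_\mu$ has $[S] = [\mu]$ (equivalently, $[S^{-1}] = -[\mu]$, interpreting the inverse shape change on $\Omega_\mu$). First I would set up the correspondence: a map $H: \Omega_\mu \to \Omega$ homotopic to $S^{-1}$ that conjugates translation by $1$ on $\Omega_\mu$ to translation by some $\rho \neq 0$ on $\Omega$ exists precisely when $\Omega_\mu$ and $\Omega$ admit a topological conjugacy (up to the uniform rescaling by $\rho$) in the homotopy class of $S^{-1}$.

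Next I would reduce to the criterion proven in \cite{Julien17}, already used in the proof of Theorem \ref{GoodFlow}: two tiling spaces $\Omega$ and $\Omega'$ are topologically conjugate via a map homotopic to a given $G$ if and only if $[G] - dx$ (suitably interpreted, i.e. the difference between $[G]$ and the class of the identity-like translation part) is asymptotically negligible. Concretely, after composing with the dilation that rescales lengths by $\rho$, the relevant homeomorphism $G_0: \Omega \to \Omega_{\rho\mu}$ has class $[G_0] = \rho[\mu] = dx + \rho\beta$ where I write $[\mu] = \frac{dx}{\rho} + \beta$. The conjugacy in the correct homotopy class exists if and only if $\rho\beta$, hence $\beta$, is asymptotically negligible. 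Thus the existence of the desired conjugacy for \emph{some} nonzero $\rho$ is equivalent to $[\mu]$ being expressible as $\frac{dx}{\rho} + (\text{asymptotically negligible})$ for some $\rho \neq 0$, which is exactly the statement that $[\mu]$ lies in the subspace $\R\,dx \oplus H^1_{AN}(\Omega,\R)$. I would note that the freedom to choose $\rho$ corresponds exactly to moving along the $\R\,dx$ factor, while the asymptotically negligible slack corresponds to the $H^1_{AN}$ factor; this is what makes the subspace $\R\,dx \oplus H^1_{AN}(\Omega,\R)$ the precise answer rather than a single coset.

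For the converse direction I would run the same chain of equivalences backward: if $[\mu] \in \R\,dx \oplus H^1_{AN}(\Omega,\R)$, write $[\mu] = c\,dx + [\beta]$ with $\beta$ asymptotically negligible and $c \neq 0$ (the case $c=0$ is excluded since then $[\mu]$ would have zero average, contradicting that $\mu$ is a positive form with a shape change defined, as discussed in the subsection on shape changes and irrationality). Setting $\rho = 1/c$, the class $[G_0] - dx = \rho[\beta]$ is asymptotically negligible, so \cite{Julien17} furnishes the conjugacy homotopic to $G_0$, and composing with the inverse dilation and $S$ produces the map homotopic to $S^{-1}$ that conjugates the two translations.

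The main obstacle I anticipate is bookkeeping the homotopy classes and the direction conventions correctly: the lemma speaks of a map homotopic to $S^{-1}$ going from $\Omega_\mu$ back to $\Omega$, whereas the cited results of \cite{Julien17, Julien-Sadun} are naturally phrased for a map $G: \Omega \to \Omega'$ and its class $[G]$, so I must be careful that the invariant of $S^{-1}$ is $-[\mu]$, that dilation by $\rho$ shifts the class by the scalar $\rho$, and that ``asymptotic negligibility'' is a property invariant under these operations (which it is, being a linear subspace stable under scaling). The genuinely substantive content is entirely imported from \cite{Julien17, Julien-Sadun}; once the dictionary between conjugacies-in-a-homotopy-class and asymptotic negligibility of $[G]-dx$ is in place, the proof is a direct identification of the subspace $\R\,dx \oplus H^1_{AN}(\Omega,\R)$ as exactly the set of classes for which that negligibility can be arranged by a choice of rescaling.
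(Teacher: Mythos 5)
Your proposal is correct in substance, but note that the paper does not actually prove Lemma \ref{JS18-lemma}: it is imported verbatim as a result of \cite{Julien-Sadun}, so there is no internal argument to compare against. What you have done is reconstruct that result from the homeomorphism invariant $[G]$ and the criterion of \cite{Julien17} that the paper itself quotes in the proof of Theorem \ref{GoodFlow} (a conjugacy homotopic to $G$ exists if and only if $[G]-dx$ is asymptotically negligible), together with the dilation bookkeeping $[G_0]=\rho[\mu]=dx+\rho\beta$ and the observation that the $c=0$ stratum of $\R\,dx\oplus H^1_{AN}(\Omega,\R)$ is vacuous because a form one can shape-change by has nonzero average, while asymptotically negligible classes have bounded (hence zero-average) integrals. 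That derivation is sound and arguably more informative than the paper's bare citation. The one step you assert rather than prove is the ``precisely when'' in your first paragraph: in the ``only if'' direction, the lemma hands you a map conjugating a \emph{single} translation $\Gamma_a$ on $\Omega_\mu$ to a single translation $\Gamma_b$ on $\Omega$, whereas the criterion of \cite{Julien17} concerns conjugacy of the full translation $\R$-actions. The upgrade from the former to the latter, within the same homotopy class, is exactly the content of the paper's Lemma \ref{GoodEnough}, which applies here because the unit-speed translation flow on $\Omega_\mu$ has constant, hence sPE, velocity; this is precisely how the paper passes from condition (4) to condition (3) in Theorem \ref{GoodFlow}. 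With that invocation made explicit, your argument is complete.
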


\begin{proof}[Proof of Theorem \ref{main4-1}]
Suppose that $[\mu] \in \R\,dx \oplus H^1_{AN}(\Omega,\R)$. We have already established that $F$ is semi-conjugate to 
uniform translation on $\Omega_\mu$. However, by Lemma \ref{JS18-lemma}, 
uniform translation on $\Omega_\mu$ is conjugate to uniform translation on 
$\Omega$ itself, so $F$ is semi-conjugate to uniform translation on $\Omega$. \end{proof}

\begin{proof}[Proof of Theorem \ref{main5-1}]
We begin with the ``if'' part of Theorem \ref{main5-1}.  
The map $\tilde{J}$, constructed above, maps each path component of $\Omega_\mu$ to itself and semiconjugates
$\tilde F = S \circ F \circ S^{-1}$ to uniform translation on $\Omega_\mu$. 
If $[\mu] \in \R\,dx \oplus H^1_{AN}(\Omega,\R)$, then uniform translation on $\Omega_\mu$ is conjugate
to uniform translation on $\Omega$, via a homeomorphism $S_1^{-1}$ that is homotopic to $S^{-1}$. The map 
$J := S_1^{-1} \circ \tilde{J} \circ S: \Omega\to \Omega$ then sends each path component of 
$\Omega$ to itself and semi-conjugates $F$ to uniform motion on $\Omega$.

The ``only if'' direction is more difficult. By assumption, $[\mu]$ exists and is irrational and 
$F$ is semi-conjugate to a translation map on $\Omega$. We also know that $\tilde F$ is semi-conjugate to
a translation map on $\Omega_\mu$. We will show that the translations on $\Omega$ and 
$\Omega_\mu$ are conjugate and use Lemma \ref{JS18-lemma} to show that $[\mu] \in \R\,dx \oplus H^1_{AN}(\Omega,\R)$.

Let $J: \Omega \to \Omega$ be a map that semi-conjugates $F$ to a fixed translation on $\Omega$. That fixed translation
is of course the time-1 sampling of a (uniform translation) flow $K_t$ on $\Omega$. 
Let $\tilde K_t = S \circ K_t \circ S^{-1}$, and let $\tilde{J}' = S \circ J \circ S^{-1}$. Meanwhile, let $\tilde{J}$ be 
the $\limsup$ map for $\tilde F$. We then have that $\tilde{J}$ semi-conjugates $\tilde F$ to translation by 1, while
$\tilde{J}'$ semi-conjugates $\tilde F$ to $\tilde K_1$. Let $\tilde \Psi$ and $\tilde \Psi'$ be the displacements of 
$\tilde{J}$ and $\tilde{J}'$, respectively.
Picking a reference tiling $\tilde T$, we similarly consider the induced maps $\tilde f$, $\tilde{\jmath}$, 
$\tilde{\jmath}'$, $\tilde \psi$, and $\tilde \psi'$.

\begin{lemma} The map $\tilde{\jmath}'$ factors through $\tilde{\jmath}$. 
That is, if $x_1, x_2 \in \R$ and 
$\tilde \jmath(x_2) = \tilde \jmath(x_1)$, then $\tilde \jmath'(x_2)=\tilde\jmath'(x_1)$.
\end{lemma}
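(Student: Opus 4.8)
The plan is to exploit the explicit $\limsup$ formula for $\tilde{\jmath}$ together with the uniform continuity that comes from everything being built out of continuous functions on the compact space $\Omega_\mu$. Using $\tilde{\phi}^n(x)=\tilde{f}^n(x)-x$, the definition of $\tilde{\jmath}$ unwinds to
\[
\tilde{\jmath}(x)=x+\limsup_{n\in\N}\{\tilde{f}^n(x)-x-n\}=\limsup_{n\in\N}\{\tilde{f}^n(x)-n\}.
\]
Since $\tilde{F}$ is $1$-bounded (Theorem \ref{rho-exists} gives $\rho=1$ on $\Omega_\mu$), the quantity $\tilde{f}^n(x)-n$ is bounded, so this $\limsup$ is finite and is attained along some subsequence. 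The whole argument reduces to a single dynamical statement, which I would isolate first: if $\tilde{\jmath}(x_1)=\tilde{\jmath}(x_2)$, then there is a subsequence $n_k\to\infty$ with $\tilde{f}^{n_k}(x_2)-\tilde{f}^{n_k}(x_1)\to 0$.

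To prove that reduction, I would assume without loss of generality $x_1\le x_2$, so that $d_n:=\tilde{f}^n(x_2)-\tilde{f}^n(x_1)\ge 0$ (as $\tilde{f}$ is increasing) and $d_n$ is bounded. Choosing $n_k$ along which $\tilde{f}^{n_k}(x_1)-n_k$ converges to the common value $a=\tilde{\jmath}(x_1)=\tilde{\jmath}(x_2)$, and passing to a further subsequence along which $d_{n_k}\to\ell\ge 0$, one gets $\tilde{f}^{n_k}(x_2)-n_k\to a+\ell$. But $a+\ell$ is then a subsequential limit of $\tilde{f}^n(x_2)-n$, hence at most $\limsup_n\{\tilde{f}^n(x_2)-n\}=a$, forcing $\ell=0$. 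Thus the two forward orbits become arbitrarily close along $n_k$.

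Next I would transport $\tilde{\jmath}'$ to a genuine translation to turn this closeness into the desired equality. Let $s$ be the lift of the shape change $S$, an increasing bi-Lipschitz homeomorphism of $\R$ (using that $\mu=dx/v$ has density bounded away from $0$ and $\infty$). Since $\tilde{K}_1=S\circ\Gamma_\rho\circ S^{-1}$, the map $u:=s^{-1}\circ\tilde{\jmath}'$ satisfies $u\circ\tilde{f}=u+\rho$, so that $u(\tilde{f}^n(x_i))=u(x_i)+n\rho$ and the $n_k\rho$ terms cancel:
\[
u(x_2)-u(x_1)=u(\tilde{f}^{n_k}(x_2))-u(\tilde{f}^{n_k}(x_1)).
\]
The remaining input is that $u$ is uniformly continuous for the ordinary metric on $\R$: the displacement $\tilde{\psi}'$ of $\tilde{\jmath}'$ is the restriction to the orbit of the continuous function $\tilde\Psi'$ on the compact space $\Omega_\mu$, and the orbit map $x\mapsto\tilde{T}-x$ is uniformly continuous, so $\tilde{\psi}'$, hence $\tilde{\jmath}'$, hence $u=s^{-1}\circ\tilde{\jmath}'$, is uniformly continuous. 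Feeding $\tilde{f}^{n_k}(x_2)-\tilde{f}^{n_k}(x_1)\to 0$ into the displayed identity yields $u(x_2)-u(x_1)=0$, and since $s$ is injective, $\tilde{\jmath}'(x_2)=s(u(x_2))=s(u(x_1))=\tilde{\jmath}'(x_1)$.

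I expect the main obstacle to be precisely this uniform-continuity step. One must argue carefully that a merely continuous (wPE) displacement $\tilde\Psi'$ restricts to a function on $\R$ that is uniformly continuous in the ordinary metric, via uniform continuity of the translation action on the compact hull. This is essential because $\tilde{k}_1$ is conjugate to a translation and so neither expands nor contracts distances; the entire force of the argument is that the two orbits become \emph{genuinely} close along $n_k$ and that a uniformly continuous $u$ preserves that closeness. By contrast, the $\limsup$ extraction is routine once finiteness from $1$-boundedness is invoked, and notably the proof uses neither disjointness of the plateaus nor monotonicity of $\tilde{\jmath}'$.
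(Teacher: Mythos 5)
Your proof is correct and follows essentially the same route as the paper's: both arguments first use the equality of the two limsups together with monotonicity of $\tilde f$ to produce infinitely many $n$ along which $\tilde f^{n}(x_2)-\tilde f^{n}(x_1)$ is arbitrarily small, and then use the semi-conjugacy relation to transport this closeness back to $\tilde{\jmath}'(x_1)$ and $\tilde{\jmath}'(x_2)$. The only divergence is presentational---where the paper says the $\tilde k_t$-flow time between $\tilde{\jmath}'(\tilde f^{n}(x_1))$ and $\tilde{\jmath}'(\tilde f^{n}(x_2))$ is invariant and arbitrarily small, you conjugate by $s$ to a genuine translation and cancel the $n_k\rho$ terms---together with your explicit uniform-continuity step, which correctly fills in a detail the paper glosses with plain ``continuity'' of $\tilde{\jmath}'$ even though the points $\tilde f^{n_k}(x_i)$ escape to infinity.
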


\begin{proof}[Proof of lemma]  We can assume without loss of 
generality that $x_1<x_2$ (since if $x_1=x_2$ there is nothing to prove) and 
that $\tilde \jmath(x_2) = \tilde \jmath(x_1)$. That is, 

\begin{eqnarray}
\limsup \left ( \tilde f^n(x_1) - n \right ) & = & x_1 + \limsup \left ( \tilde \phi^n(x_1) - n \right ) \cr 
& = & \tilde j(x_1) \cr & = &  \tilde j(x_2) \cr & = &  x_2 + \limsup \left ( \tilde \phi^n(x_2) - n\right ) \cr 
& = & \limsup \left ( \tilde f^n(x_2) - n\right ).
\end{eqnarray}

Since $\tilde f$ is orientation-preserving, for every $n$ we have $\tilde f^n(x_1) < \tilde f^n(x_2)$, and hence 
$\tilde f^n(x_1) - n < \tilde f^n(x_2)-n$. Pick $\epsilon > 0$. Since $\tilde f^n(x_1) - n$  is within $\epsilon/2$ of 
$\tilde \jmath(x_1)$ infinitely often, and since $\tilde f^n(x_2) - n$  is eventually bounded by 
$\tilde \jmath(x_1)+ \epsilon/2$, there are (infinitely many) values of $n$ for which  $\tilde f^n(x_2) - \tilde f^n(x_1)
< \epsilon$.

Since $\tilde \jmath'$ is continuous, this means that there exist values of $n$ for which 
$\tilde{\jmath}'((\tilde f)^n(x_1))$ and $\tilde{\jmath}'((\tilde f)^n(x_2))$ are arbitrarily close, 
so the time it takes to flow by $\tilde k_t$ from $\tilde{\jmath}'((\tilde f)^n(x_1))$ to 
$\tilde{\jmath}'((\tilde f)^n(x_2))$ is arbitrarily small. However, this is exactly the same as the 
time it takes to flow from $\tilde{\jmath}'(x_1)$ to $\tilde{\jmath}'(x_2)$, since 
$\tilde{\jmath}'$ semi-conjugates $\tilde f$ to $\tilde k_1$. Thus we must have $\tilde{\jmath}'(x_1)
= \tilde{\jmath}'(x_2)$.

\end{proof}

We return to the proof of Theorem \ref{main5-1}. Since $\tilde {\jmath}' $ (or $\tilde J'$) identifies all points that are identified by $\tilde {\jmath}$ (or $\tilde J$), there must be a 
continuous $\tilde H: \Omega_\mu \to \Omega_\mu$ such that  $\tilde J' = \tilde H \circ \tilde J$, and 
a corresponding map $\tilde h: \R \to \R$. To show that $\tilde H$ is a homeomorphism, we must show that 
$\tilde J'$ only identifies points that are identified by $\tilde J$. 

We now let $x_1 < x_2$ be points such that $\tilde{\jmath}(x_1)< \tilde{\jmath}(x_2)$. Let $y_1=\tilde{\jmath}(x_1)$ and $y_2=\tilde{\jmath}(x_2)$. 
Note that $\tilde h$ semi-conjugates $\tilde \Gamma_1$ (that is, translation by 1 on $\Omega_\mu$) to 
$\tilde k_1$, so
for all integers $n$ and $y \in \R$, 
\begin{equation} \tilde h(y+n) = \tilde k_n(\tilde h(y)).
\end{equation}
If $\tilde h$ identifies $y_1$ and $y_2$, $\tilde h$ must also identify $y_1+n$ and $y_2+n$. However, 
by the irrationality of $dx$, $\{\tilde T - (y_1+n)\}$ is dense in $\Omega_\mu$. Since $\tilde H$ is continuous, this 
means that $\tilde h$ must identify every point $y$ with $y+(y_2-y_1)$, and so must collapse the entire real line
to a point. Since this is a contradiction, we conclude that $\tilde h(y_2) \ne \tilde h(y_1)$. Therefore  $\tilde{\jmath}(x_1) = \tilde{\jmath}(x_2)$ iff $\tilde{\jmath}'(x_1) = \tilde{\jmath}'(x_2)$. That is, $\tilde H$ is
a homeomorphism. 

Since $\tilde H$ conjugates $\tilde \Gamma_1$ to $\tilde K_1$, and since 
$S^{-1}$ conjugates $\tilde K_1$ to $K_1=\Gamma_1$, $S^{-1} \circ \tilde H$ conjugates uniform translation on $\Omega_\mu$ to uniform translation on $\Omega$. Since $\tilde H$ preserves 
translational orbits,  $\tilde H$ is homotopic to the identity, so $S^{-1} \circ \tilde H$ is homotopic to $S^{-1}$. By
Lemma \ref{JS18-lemma}, $[\mu] \in  \R\,dx \oplus H^1_{AN}(\Omega,\R)$. \end{proof}

\subsection{Steps towards Denjoy's Theorem}

Ideally, we would like to prove an analogue of Denjoy's Theorem, that if our map
$F$ is smooth enough (say, $f_T$ being $C^2$ for each reference tiling $T$), and if 
$[\mu]$ is irrational, then $F$ is conjugate, and not merely semi-conjugate, to 
uniform translation on $\Omega_\mu$. 

Recall the usual proof of Denjoy's Theorem for circle maps. If $f: S^1 \to S^1$ is an 
orientation-preserving homeomorphism with irrational winding number, then there is a map 
$\Psi: S^1 \to S^1$ that semi-conjugates $f$ to an irrational rotation. The map $\Psi$ may collapse
intervals to points. Such intervals are called {\em wandering}. If $U$ is a wandering interval, then
$U_n := f^n(U)$ is also a wandering interval. If $n \ne m$ and $U_n \cap U_m \ne \emptyset$, then we must have $U_n = U_m$, in which case the left endpoint of $U_n$ is a periodic point of $f$, which contradicts
the irrationality of the winding number. Thus the intervals $\{U_n\}$ are disjoint. Since the total 
length of the intervals is bounded by the circumference of $S^1$, the length of $U_n$ must go to
zero as $n \to \pm \infty$ and the ratio $\frac{|U_0|^2}{|U_n| |U_{-n}|}$ must go to $\infty$. 
However, that ratio is bounded by a quantity
called the {\em distortion}. If $f$ is $C^2$ and $f'$ is never zero, then the distortion is in turn  
bounded by
the integral of $|f''/f'|$ over the entire circle. This is a contradiction, so wandering intervals cannot
exist, so $\Psi$ must be a genuine conjugacy.\footnote{When $f$ is $C^2$ but $f'$ is zero at isolated points
the argument is technically more complicated, but follows the same overall strategy.}

Theorem \ref{H-is-sPE} is a very powerful tool, since it says that the intervals that
are collapsed by $\tilde{\jmath}$ (and play a role similar to wandering intervals) can be identified 
locally as the intervals where $\tilde \psi$ has
derivative $-1$. If $R$ is the greater of the sPE radius of $\tilde f$ and the sPE radius of 
$\phi$, then we can consider an approximant $X_R$ to $\Omega_\mu$ obtained by collaring out
to distance $R$, and a map from $X_R$ to itself induced by $\tilde F$. Since $\tilde \Psi$ is sPE, it can be viewed
as a function 
of on $X_R$. We call the intervals where $\tilde \psi' = -1$ {\it collapsing intervals}. As with circle maps, our map induced by $\tilde F$ sends each collapsing interval onto another. 

Unfortunately, the resemblance to circle maps ends there. Relative to a reference tiling $T$, 
let $\hat U_0 \subset \R$ be an interval collapsed by $\tilde{\jmath}$, let $\hat U_n = (\tilde f)^n(\hat U_0)$,
and let $U_n$ be the image of $U_n$ in $X_R$. Of course the intervals $\hat U_n$ are disjoint, but
they march off to infinity, so there is no a priori reason why their total length should be finite. 
As for the collapsing intervals $U_n$, they don't have to be disjoint! If $U_n = U_0$,
this merely says that the patches around $\hat U_n$ and $\hat
U_0$ are the same. It does not imply that 
$\hat U_{2n}$ will be similar, or that we have a periodic orbit in $X_R$, and does not contradict the 
irrationality of $\mu$. $X_R$ is a {\em branched} manifold, and the map induced by $\tilde f$ on $X_R$ is not 
single-valued. When we reach branch points, it can pick either branch, and does so in a way that reflects the
underlying non-periodicity of our tilings.  

This is not to say that Denjoy's Theorem is false for tiling spaces. We conjecture that, when $f$ is $C^2$
and $\mu$ is irrational, $F$ will indeed be conjugate to uniform motion. However, the proof of this conjecture
will not be straightforward. It will either require properties of strong pattern equivariance such as stronger
versions of Theorem \ref{H-is-sPE}, or it will require a strategy qualitatively different from the usual
proof for circle maps.

\section{Examples}\label{sec-examples}

In this section we examine a number of different tiling spaces and maps on these
spaces to get a feel for the meaning of our main theorems.

\begin{example}[The Fibonacci tiling]

The Fibonacci tiling has alphabet $\A = \{a,b\}$ and the allowed (bi-infinite) 
sequences of tile labels are generated by the substitution $a \to ab$, $b \to a$.
We pick two arbitrary positive constants $L_a$ and $L_b$ to be the lengths of the
$a$ and $b$ tiles, and denote by $\Omega_{Fib}$ the resulting space of tilings. 

The cohomology of $\Omega$ does not depend on $L_a$ and $L_b$. Regardless of the 
tile lengths, $\check H^1(\Omega_{Fib},\R)=\R^2$.  Let $i_a$ be an sPE 1-form that
integrates to 1 on each $a$ tile and to 0 on each $b$ tile, and let $i_b$ integrate
to 1 on each $b$ tile and to 0 on each $a$ tile. Since the sequences associated to 
the Fibonacci tiling are Sturmian, the form $i_a - \phi i_b$, where
$\phi = (1+\sqrt{5})/2$, is asymptotically negligible. Meanwhile, $dx$ is 
cohomologous to $L_a i_a + L_b i_b$. 

Since $[dx]$ and $[i_a-\phi i_b]$ span $\check H^1(\Omega_{Fib}, \R)$,  every 
sPE 1-form can be written as a multiple of $dx$ plus something asymptotically
negligible. In particular, if $\bar F$ is {\em any} unidirectional flow on $\Omega$
with sPE velocity function $V$, and if $\mu = dx/v$, then there is a nonzero constant
$\rho$ and an asymptoticaly negligible form $\beta$ such that $\mu = \rho dx + \beta$.
By Theorem \ref{GoodFlow}, $\bar F$ is then topologically conjugate to uniform 
translation on $\Omega_{Fib}$ at speed $\rho$.

The key points are that $\check H^1(\Omega_{Fib},\R)/\check H^1_{AN}(\Omega_{Fib},\R) = \R$ and that this group is generated by $[dx]$. This is reminiscent of the situation for circle 
maps, where $H^1(S^1,\R)=\R$ is already generated by $[dx]$, and there are no
nontrivial asymptotically negligible classes. For any tiling space (meeting 
the usual assumptions) where $\check H^1/\check H^1_{AN}=\R$, an sPE 
self-homeomorphism $F$ that
is homotopic to the identity is conjugate to a uniform translation if and only if 
it is the time-1 sampling of an sPE flow.
\end{example}

\begin{example}[A non-Pisot substitution]

The situation is different when the substitution matrix is not Pisot. 
Consider a space $\Omega_{nP}$ of tilings generated by 
the substitution $a \to ab$, $b \to aaa$, with tile lengths $L_a=L_b=1$. 
As with the Fibonacci
tiling, $\check H^1(\Omega_{nP},\R)=\R^2$ is generated by $[i_a]$ and $[i_b]$. 
However, the substitution matrix 
$M = \left ( \begin{smallmatrix} 1 & 3 \cr 1 & 0 \end{smallmatrix} \right )$ has eigenvalues 
$( 1 \pm \sqrt{13})/2$, and hence is expansive. Any (nontrivial) 
linear combination of $i_a$ and $i_b$ gives large results when integrated over supertiles of 
sufficiently large order.  Thus $\check H^1_{AN}(\Omega_{nP},\R)$ is trivial. 

Suppose that we have n flow $\bar F$ on $\Omega_{nP}$ with sPE velocity that
takes 1 second to cross each $a$ tile and 2 seconds to cross each $b$ tile. Then 
$\mu$ will be cohomologous to $i_a + 2 i_b$, while $dx$ is cohomologous to 
$i_a + i_b$. Since $\mu$ is not cohomologous to a multiple of $dx$ plus something
asymptotically negligible, $\bar F$ cannot be conjugated to a uniform translation
on $\Omega_{nP}$. 

Note that if we do a shape change by $\mu$ to $\Omega_{nP}$, then we obtain a space
$\Omega_\mu$ of tilings with $L_a=1$ and $L_b=2$. On $\Omega_\mu$, the time it takes to
cross a tile is exactly equal to its length, and the flow $\bar F_t$ has 
been conjugated to $\Gamma_t$. However, $\Gamma_t$ on $\Omega_\mu$ is not conjugate
to any $\Gamma_{\rho t}$ on $\Omega_{nP}$.

To summarize, $\check H^1(\Omega_{Fib},\R) = \check H^1_{AN}(\Omega_{Fib},\R)
\oplus \R$, so every shape change on $\Omega_{Fib}$ is conjugate to a uniform dilation,
and every unidirectional flow on $\Omega_{Fib}$ with sPE velocity is conjugate to uniform translation
on $\Omega_{Fib}$ at some speed $\rho \ne 0$. However, $\check H^1(\Omega_{nP},\R) \ne 
\check H^1_{AN}(\Omega_{nP},\R) \oplus \R$, so there are shape changes on 
$\Omega_{nP}$ that change the translational dynamics. Every flow on $\Omega_{nP}$ with sPE velocity
can be conjugated to a uniform flow on {\em some} shape-changed space $\Omega_\mu$, but
that is different from a uniform flow on the original space $\Omega_{nP}$.
\end{example}

\begin{example}[A Denjoy-like construction] Denjoy \cite{Denjoy} constructed a circle map
with irrational rotation number that is semi-conjugate, but not conjugate, to a uniform
rotation. Here we adapt Denjoy's construction to get a map of tiling spaces, with irrational
rotation class, that is semi-conjugate but not conjugate to uniform translation. 

We first recall Denjoy's construction.
%(Unless of course BG already did that in her section, in which case we can just refer back to that.) 
Start with a circle $S^1=\R/\Z$ of 
length 1, an irrational number $\alpha$, and a countable collection $\{I_n\}, n\in\Z$
of closed intervals, disjoint from each other and from $S^1$, 
whose total length is finite. Let $r: S^1 \to S^1$ be rotation by $\alpha$. Pick a point $x_0 \in S^1$ and let $x_n = r^n(x_0)$. 

Now construct a new circle $\bar S^1$ by replacing each point $x_n \in S^1$ by
the interval $I_n$. The total length of $\bar S^1$ is finite, and we can rescale to give it total length 1. We define a new map $f: \bar S^1 \to \bar S^1$ that maps each
$I_n$ homeomorphically (say, linearly) onto $I_{n+1}$, and that equals $r$ on the 
complement of the inserted intervals. Note that $f$ is {\em not} minimal, since the
orbit of a point in the complement of the inserted intervals does not contain any points in $I_n$, so its closure does not contain any points in the interior of 
$I_n$. 

Since $f$ is not minimal, it cannot be topologically conjugate to an irrational rotation. Since $f$ has no periodic points, it cannot be topologically conjugate to 
a rational rotation. Thus $f$ is not conjugate to any rotation. 
However, $f$ is {\em semi-}conjugate to a rotation by $\alpha$, since 
if $\pi: \bar S^1 \to S^1$ is the obvious projection, then $\pi \circ f = r \circ \pi$. 

We now transfer this construction to the category of 1-dimensional FLC tiling spaces. Let $\Omega$
be such a tiling space whose tiles all have integer length. (Say, all of length 1).
Then there is a map $\pi: \Omega \to S^1$ that sends each tiling to the common
location of all its vertices $\pmod 1$. This gives $\Omega$ the structure of a 
Cantor bundle over $\bar S^1$.\footnote{It is always possible to do a shape change to make
all the tiles have integer length, so {\em every} tiling space is homeomorphic to a 
Cantor bundle over a circle. Moreover, every higher dimensional minimal FLC 
tiling space is 
homeomorphic to a Cantor bundle over a torus. \cite{Sadun-Williams}}

Now pick an irrational $\alpha \in (0,1)$ and construct $f$ as before.  
Let $F_0: \R \to \R$ be a lift of $f$ such that, for all $x \in \R$, 
$x < F_0(x)  < x+1$ and $F_0(x+1)=F_0(x)+1$. We now define a 
self-homeomorphism $F: \Omega \to \Omega$. For each tiling $T \in \Omega$ {\em with
vertices at integer points}, and for each $x \in \R$, let $F(T-x)=T-F_0(x)$. 
Since $(T+n)-F_0(x+n)=T-F_0(x)$, $F$ is well-defined. $F$ is not conjugate
to a uniform translation, but the projection $\pi: \bar S^1 \to S^1$ lifts to 
a self-map of $\Omega$ that semi-conjugates $F$ to $\Gamma_\alpha$.
\end{example}

\begin{example}[A map with multiple rotation classes]

In Theorem \ref{thm-unique} we showed that if a map $F \in \F(\Omega)$ has an irrational
rotation class, then this class is unique. Here we show that a map on a tiling space can 
have multiple rotation classes, all of them rational. 

Pick an $\epsilon \in (0, 1/10)$ and consider a Fibonacci tiling space $\Omega$ with
tile lengths $L_a=1+\epsilon$ and $L_b = 1 - \phi\epsilon$, where 
$\phi=(1+\sqrt{5})/2$ is the golden mean. 
The length of $n$ consecutive tiles is
close to $n$, with the difference being bounded by $\phi^2\epsilon < 3\epsilon < 0.3$, but is never exactly $n$. 

On this space, let $F = \Gamma_1$. Manifestly, $dx$ 
is a rotation class for $F$, and is cohomologous to $(1+\epsilon) i_a + (1-\phi \epsilon)i_b$. Note that the $F$-orbit of a tiling with a vertex at 
the origin always has a vertex within 0.3 of the origin, so this orbit is not dense and 
$dx$ is rational. 

Pick an arbitrary $s \in (0,1)$ and let $\mu_s = (1+s\epsilon)i_a + (1-s\epsilon \phi)i_b$. 
If $x_1$ and $x_2$ are corresponding points in large patches, then they are separated by 
the lengths of $n$ consecutive tiles. Since $\int_{x_1}^{x_2} \mu_s = s(x_2-x_1) + (1-s)n$, 
the integral $\int_{x_1}^{x_2} \mu_s$ is strictly between $n$ and $x_2-x_1$, and therefore has the same integer part
(either $n$ or $n-1$) as $x_2-x_1$. Since the estimates that characterize a rotation form $\mu$ only
depend on whether or not $\int_{x_1}^{x_2} \mu$ is an integer, and on the integer part of that integral, $\mu_s$ is a rotation form, and since $[i_a]$ and $[i_b]$ are a basis for $H^1(\Omega_{fib}) = \Z^2$, $[\mu_s] \ne [dx]$ as rotation classes.

\end{example}

\begin{example}[Maps without rotation classes]

We now construct a family of tiling spaces, and maps on those tiling space,
meeting the usual technical assumptions, for which  
rotation forms do not exist. By the contrapositive to Proposition \ref{mu-exists}, these 
maps are not time-1 samplings of flows with sPE velocity, and in particular are not
conjugate to translations on $\Omega$ or on any shape change of $\Omega$.

Our examples are all fusion tilings \cite{Frank-Sadun14}. These are hierarchical tilings that
generalize substitution tilings. In our examples, 
there are three species of tiles, 
labeled $a$, $b$, and $c$, that group into two kinds of clusters, 
$A_1$ and $B_1$. The $A_1$ and $B_1$ clusters then group into larger clusters
$A_2$ and $B_2$, that group into larger clusters $A_3$ and $B_3$, and so on.
We will show that every sPE 1-form $\mu$ has the property that, 
for all sufficiently large  $j$, $\int_{A_j} \mu=\int_{B_j} \mu$. 
However, our $F$
will be such that the number of steps needed to cross an $A_j$ tile is
different from the number of steps needed to cross a $B_j$ tile. In fact, 
the difference in those crossing times can be made arbitrarily large! 
This implies that the arbitrary form $\mu$ is not a rotation form, so  
rotation forms (and rotation classes) for these maps do not exist. 

We begin our construction with the tiles $a$, $b$, and $c$. Let $c$ be 
a tile of length $L_c=1$, and let $a$ and $b$ have lengths greater than 1.
(For example, $L_a=L_b=\pi$.) Pick a smooth function $\phi$ on the disjoint union of
the three tiles
$a$, $b$, and $c$, such that $\phi(x)=1$ on the endpoints
of the three tiles, and such that for any two points $x_1 < x_2$ within the same tile,
$x_1 + \phi(x_1) < x_2 + \phi(x_2)$. This extends to a function on all the points 
of any tiling constructed from $a$, $b$, and $c$ tiles. We now define a map
\begin{equation} F: \Omega \to \Omega, \qquad F(T) = T - \phi(T(0)), \end{equation}
where $T(0)$ means the origin of the tiling $T$. 
That is, our displacement map $\Phi: \Omega \to \R$ is given by
$\Phi(T) = \phi(T(0))$, and the function $\phi_T$ is just the restriction of 
$\phi$ to the points of $T$ itself.

Now pick integers $n_2, n_3, n_4, \ldots$ that grow sufficiently rapidly. 
(The precise rate required will depend on $\phi$.) We construct our ``supertiles''
$A_j$, $B_j$ recursively, as follows:
\begin{eqnarray}
A_1 & = & ac, \\
B_1 & = & bc, \\
A_j & = & (A_{j-1} B_{j-1})^{n_j} \hbox{ if } j>1, \\
B_j & = & A_{j-1}^{n_j} B_{j-1}^{n_j} \hbox{ if } j>1.
\end{eqnarray}
That is, an $A_j$ is an $A_{j-1}$ followed by a $B_{j-1}$ followed by 
an $A_{j-1}$ followed by a $B_{j-1}$, etc., with each kind of $(j-1)$-supertile 
appearing $n_j$ times, while a $B_j$ consists of $n_j$ $A_{j-1}$'s followed
by $n_j$ $B_{j-1}$'s. Note that every $A_j$ and $B_j$ supertile begins with
an $A_{j-1}$ and ends with a $B_{j-1}$. Thus every $A_j$ and $B_j$ is preceded
by a $B_{j-1}$ and followed by an $A_{j-1}$. Also note that the lengths $|A_j|$ of
$A_j$ and $|B_j|$ of $B_j$ are equal.

Let $\alpha$ be an sPE 1-form with some radius $R$. Then there is a 
positive integer $j-2$ such that $|A_{j-2}|=|B_{j-2}|>R$. This implies that
for every $k$ with $k>j-2$,
$\int_{A_{k}} \alpha$ is the same for every $A_{k}$ cluster, regardless of
where that cluster sits in the tiling. Likewise, $\int_{B_{k}} \alpha$ is the
same for every $B_k$ cluster. But then, for every $k \ge j$,
\begin{equation}
\int _{A_k} \alpha = n_k \int_{A_{k-1}} \alpha + n_k \int_{B_{k-1}} \alpha= \int_{B_k} \alpha.
\end{equation}
That is, integrating sPE forms cannot distinguish between $A$ and $B$ supertiles
of sufficiently high degree. 

Next we consider the dynamics of $F$. Note that $F$ takes the left endpoint
of each $c$ tile to the right endpoint. The specific dynamics of $F$ on 
$a$ and $b$ induce maps $F_{1,a}: c \to c$ and $F_{1,b}: c \to c$. For $x \in c$, consider iterates of $x$ under $F$. If $c$ is followed by $ac$, then let $F_{1,a}(x)$ be the first return of $x$ in $c$. In this way $F_{1,a}:c \to c$. Identifying the endpoints of $c$ to form a circle, $F_{1,a}$ is a circle map, and hence has a rotation number $\rho_{1,a}$. Likewise, let $F_{1,b}$ be the first return on $c$ when followed by $bc$, and let the induced rotation number on $c$ be $\rho_{1,b}$. For definiteness, we take
both $\rho_{1,a}$ and $\rho_{1,b}$ to lie in $[0,1)$. We also let $\rho_{1,ab}$
be the rotation number of $F_{1,b} \circ F_{1,a}$. Note that 
$\rho_{1,ab}$ is generically not congruent to $\rho_{1,a} + \rho_{1,b}$ (mod 1).

The number of steps needed to cross an $A_1$ is always one of two 
consecutive integers $m_{a,1}$ and $m_{a,1}+1$. Similarly, the number of steps
needed to cross a $B_1$ is either $m_{b,1}$ or $m_{b,1}+1$. 
If $N$ is a large integer, then the number of steps needed
to cross $A_1^N$ is approximately $N(m_{1,a}+1 -\rho_{1,a})$, and the 
number of steps needed to cross $B_1^N$ is approximately 
$N(m_{1,b}+1 -\rho_{1,b})$. (In both cases, it takes at most $N(m+1)$ steps, but
we save a step every time the circle map crosses back from the end of $c$ to
the beginning, which happens a fraction $\rho$ of the time.) 

As a result, if $n_2$ is large, the number of steps needed 
to cross $B_2=A_1^{n_2}B_1^{n_2}$ is approximately (that is, within one of) 
$n_2(m_{1,a}+m_{1,b}+2- \rho_{1,a}-\rho_{1,b})$. Meanwhile,
the number of steps needed to cross $A_2=(A_1B_1)^{n_2}$ is approximately
$n_2(m_{1,a}+m_{1,b}+2 - \rho_{1,ab})$. (Depending on whether it is sometimes 
possible to cross $A_1B_1$ in $m_{1,a}+m_{2,a}$ steps, we may need to take 
$\rho_{1,ab}$ in $[1,2)$ for this to apply.) Since $\rho_{1,ab}$ is not equal to 
$\rho_{1,a}+\rho_{1,b} \pmod{1}$, if we take $n_2$ large enough we can get the
crossing times to differ by an arbitrarily large amount. 

The crossing of $A_2$ and $B_2$ generate their own return maps from the copy
of $c$ immediately preceding the supertile to the one at the end of the
supertile. Call these $F_{2,a}$ and $F_{2,b}$, with rotation numbers 
$\rho_{2,a}$ and $\rho_{2,b}$. As before, the rotation number of 
$F_{2,b} \circ F_{2,a}$, which we denote $\rho_{2,ab}$, is generically not
equal to $\rho_{2,a} + \rho_{2,b}$ (mod 1). By taking $n_3$ large enough, we
can make the crossing time for $B_3=A_2^{n_3} B_2^{n_3}$, which is approximately 
$n_3(m_{2,a} + m_{2,b} + 2 - \rho_{2,a}-\rho{2,b})$ differ by an arbitrarily
large amount from the crossing time for $A_3 = (A_2B_2)^{n_3}$, which is 
approximately $n_3(m_{2,a}+m_{2,b}+2 - \rho_{2,ab})$.  

We repeat the process, using the fact that generically 
$\rho_{j-1,ab} \ne \rho_{j-1,a} + \rho_{j-1,b}$ to pick $n_j$ big enough so
that the crossing times for $A_n$ and $B_n$ differ by more than 1. The end 
result is a tiling space $\Omega$ and a map $F$ on $\Omega$ with sPE displacement that has no
rotation class. 
\end{example}

\begin{example}[A map with no rotation number]
\label{norotationnumber}

Finally, we construct an example that has a rotation {\em class}, but that doesn't have a 
rotation {\em number}. We consider a fusion tiling with two tile types, $a$, and $b$, each of length 1. 
The fusion rule is 
\begin{equation} A_n = A_{n-1}^{10^n} B_{n-1}, \qquad B_n = A B_{n-1}^{10^n}, \end{equation}
where $A_0=a$ and $B_0=b$. The translation flow on the resulting tiling space $\Omega$ is minimal but
not uniquely ergodic. Rather, there are two ergodic measures, $\mu$ and $\nu$, that describe
the frequencies of patches in a high-order $A$-supertile (where roughly 90\% of the tiles, 99\%
of the 1-supertiles, and 99.9\% of the 2-supertiles are of type $A$), and the frequencies of patches in 
a high-order $B$-supertile (where only 10\% of the tiles, 1\% of the 1-supertiles and 0.1\% of the 
2-supertiles are of type $A$).

Let $v_0(x)$ be exactly 1 on every $a$ tile and 2 on every $b$ tiles. Convolve $v_0$ with a smooth bump function
of narrow support to get a smooth sPE velocity function $v(x)$. Let $F$ be the 
time-1 sampling of the flow by $v$. By the results of Section \ref{sec-flows}, $dx/v$ is a rotation form, and 
$[dx/v]$ is a rotation class. However, the limit 
\begin{equation}\label{rho-limits} \lim_{n \to \pm \infty} \frac{f_T^n(x)-x}{n} \end{equation}
depends on the reference tiling $T$ and may not even exist. 

If $T$ is in the support of $\mu$, then roughly 90\% of the tiles are type $a$, and take about 1 unit of time to
cross, while 10\% are type $b$, and take about half a unit of time to cross. The time needed to cross a distance $L$
averages to approximately
\begin{equation} 0.9L + \frac{0.1L}{2} =0.95L \end{equation}
so the distance traveled per unit time is about $1/0.95 \approx 1.05$. In this case, both limits in 
(\ref{rho-limits}), as $n \to \infty$ and as $n \to -\infty$, will be around 1.05. 

If $T$ is in the support of $\nu$, then only 10\% of the tiles are of type $a$, 90\% are of type $b$, and the
time needed to cross a distance $L$ is, on average, 
\begin{equation} 0.1L + \frac{0.9L}{2} =0.55L. \end{equation}
\end{example}
In this case both limits in (\ref{rho-limits}) will be around $1/0.55\approx 1.8$. If $T$ is neither in the support
of $\mu$ nor in the support of $\nu$, then the limits in (\ref{rho-limits}) may not exist, or may not agree. For 
instance, if $T$ consists of an infinite-order $A$-supertile for $x>0$ and an infinite-order $B$ supertile for $x<0$,
then the limit as $n \to +\infty$ will be around 1.05, while the limit as $n\to -\infty$ will be around 1.8. 

Although this map comes from a flow and is conjugate to a uniform translation on a tiling space 
$\Omega_{[dx/v]}$, the concepts of rotation number and $\rho$-boundedness simply make no sense on $\Omega$
itself and none of the results of \cite{aliste2010translation} apply. 
This example shows that the assumption of unique ergodicity in Theorem \ref{rho-exists}, and in
our main theorems, is essential.

\section{Conclusions and open problems.} In this paper we have studied rotation theory in
the category of FLC tiling spaces and maps with sPE displacement. In this setting we have shown that
\begin{enumerate}
    \item Instead of having a rotation number, we need to study the rotation
    {\em class} $[\mu] \in H^1(\Omega ,\R)$.
    \item This class doesn't always exist and isn't always unique. However, {\em irrational}
    rotational classes are unique, and maps where the rotation class does not exist do not
    come from time-1 samplings of flows with sPE velocity. 
    \item Unlike with circle maps, 
    there is a difference between coming from a flow with sPE velocity and being 
    conjugate to a uniform translation. If a map $F: \Omega \to \Omega$ comes from a flow with sPE velocity, then $F$ is conjugate to uniform translation on a different space $\Omega_\mu$,
    where $\Omega_\mu$ is obtained from $\Omega$ by doing a shape change by the rotation class $\mu$. Whether or not
    $F$ is conjugate to uniform translation on $\Omega$ itself, via a map that preserves path components, is purely a cohomological question.
    \item If a map $F \in \F(\Omega)$ has a rotation class $[\mu]$ that is irrational, then
    $F$ is semi-conjugate to uniform translation on $\Omega_\mu$. $F$ is then semi-conjugate to uniform
    translation on $\Omega$ itself, via a map that preserves path components, if and only if $[\mu] \in \R\, dx \oplus
    H^1_{AN}(\Omega, \R)$.
\end{enumerate}

One natural open question is whether a version of Denjoy's Theorem applies in this situation.
Is there a condition on the smoothness of $F$ that guarantees that the resulting semi-conjugacies are in 
fact conjugacies? We believe the answer to be ``yes'', but the usual approaches to proving Denjoy's Theorem 
for circle maps do not carry over to tiling spaces. A genuinely new approach is needed.

Another natural open question is to study what happens when we relax the assumptions that 
our maps have sPE displacement and that our flows have sPE velocity. For instance, we could ask when a map 
$F$ with sPE displacement on a tiling space $\Omega$ with FLC is the time-1 sampling of a
flow whose velocity function is not necessarily sPE. The trouble is that, if such a flow
existed, the resulting
form $\mu = dx/v$ would not be sPE, so doing a shape change by $\mu$ would change $\Omega$
into a space of tilings that no longer have FLC. This problem is not really approachable 
using the existing theory of FLC tilings. 

Relatively little is known about tilings with infinite local complexity, or ILC. 
(See \cite{Frank-SadunILC} for some recent results.) The \v Cech cohomology of the 
tiling space is well-defined, but does not directly correspond to a theory of PE forms. 
An alternate approach is to study the cohomology of wPE forms, without regard to \v Cech
cohomology, but this ``weak PE cohomology'' is infinitely generated even for simple tilings like the Fibonacci tiling. 

The upshot is that, once we leave the category of FLC tilings and of sPE functions, flows and maps, the problem stops reflecting the fairly rigid structure of tilings. 
Instead, it becomes a problem about ``matchbox manifolds'' is general. 
That is, about foliated spaces with 1-dimensional leaves. Such problems are interesting
and well worth studying, but require a whole new toolkit. 
\section*{Acknowledgements}
J.A.-P. acknowledges financial support from CONICYT FONDECYT REGULAR 1160975, CHILE. The authors thank Michael Baake, Alex Clark, Franz G\"ahler, Antoine Julien, Johannes Kellendonk, John Hunton,  Jamie Walton and Dan Rust for helpful discussions.  

%\bibliography{biblio}
%\bibliographystyle{alpha}
\end{document}